\documentclass[11pt]{amsart}
\usepackage[margin=1.15in]{geometry}
\usepackage{amsmath,amssymb,amsthm,mathtools}
\usepackage{enumitem}
\usepackage[colorlinks=true,linkcolor=blue,citecolor=blue,urlcolor=blue]{hyperref}

\numberwithin{equation}{section}
\newtheorem{theorem}{Theorem}[section]
\newtheorem{lemma}[theorem]{Lemma}
\newtheorem{proposition}[theorem]{Proposition}
\newtheorem{corollary}[theorem]{Corollary}
\theoremstyle{definition}
\newtheorem{definition}[theorem]{Definition}
\newtheorem{example}[theorem]{Example}
\theoremstyle{remark}
\newtheorem{remark}[theorem]{Remark}

\newcommand{\Fq}{\mathbb{F}_q}
\newcommand{\OK}{\mathcal{O}_K}
\newcommand{\N}{\mathcal{N}}
\newcommand{\kinf}{k_\infty}
\newcommand{\Z}{\mathbb{Z}}
\newcommand{\R}{\mathbb{R}}
\newcommand{\T}{\mathbb{T}}
\newcommand{\Hk}{\chi_{\mathrm{Hecke}}}
\newcommand{\f}{\mathfrak{f}}
\newcommand{\p}{\mathfrak{p}}
\newcommand{\aid}{\mathfrak{a}}
\newcommand{\Dd}{\mathfrak{D}}
\newcommand{\cS}{\mathcal{S}}
\DeclareMathOperator{\sgn}{sgn}
\DeclareMathOperator{\Norm}{Norm}
\DeclareMathOperator{\Cl}{Cl}
\DeclareMathOperator{\Pic}{Pic}
\DeclareMathOperator{\supp}{supp}
\renewcommand{\deg}{\operatorname{deg}}
\newcommand{\pu}[1]{\langle #1\rangle}
\newcommand{\eqs}{\equiv^{*}}
\newcommand{\e}{\mathrm{e}}

\begin{document}

\title{The distribution of $\alpha p$ modulo one in real quadratic function fields}
\author{Esrafil Ali Molla}
\address{Department of Mathematics, Indian Institute of Technology Bombay, Mumbai 400076, India}
\email{esrafil.math@gmail.com}
\subjclass[2020]{Primary: 11J71, 11R59, 11R44; Secondary: 11N05, 11J17}
\keywords{Distribution modulo one, distribution of prime ideals, function fields, Hecke characters, Diophantine approximation}

\begin{abstract}
In the thirties of the last century, I.\,M. Vinogradov proved that the inequality $\|p\alpha\|\le p^{-1/5+\varepsilon}$ has infinitely many prime solutions $p$, where $\|\cdot\|$ denotes the distance to a nearest integer. This result has subsequently been improved by many authors. In particular, Vaughan (1978) replaced the exponent $1/5$ by $1/4$, using his celebrated identity for the von Mangoldt function and a refinement of Fourier analytic arguments. The current record is due to Matom\"aki (2009), who showed that the inequality $\|p\alpha\|\le p^{-1/3+\varepsilon}$ has infinitely many prime solutions. This exponent $1/3$ is considered the limit of current technology. Baier and the author established an analogue of Matom\"aki's result for the function field $k=\Fq(T)$ and its imaginary quadratic extensions. Moreover, for real quadratic extensions of $k$ of class number one, they obtained an analogue of Vaughan's result (exponent $1/4$). In this paper, we establish the analogue of Matom\"aki's result for all real quadratic extensions of $k$ without any restriction on the class number.
\end{abstract}

\maketitle

\section{Introduction and statements of the main results}

A fundamental result in Diophantine approximation is the following theorem due to Dirichlet.

\begin{theorem}[Dirichlet]
Given any real irrational $\alpha$, there are infinitely many pairs $(a,q)\in\Z\times\mathbb{N}$ of relatively prime integers such that
\[
\Big|\alpha-\frac{a}{q}\Big|<q^{-2}.
\]
\end{theorem}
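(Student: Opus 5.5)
The plan is the classical pigeonhole argument, followed by a separate step that produces infinitely many distinct pairs. Fix an integer $N\ge 1$ and consider the $N+1$ real numbers $\{j\alpha\}=j\alpha-\lfloor j\alpha\rfloor\in[0,1)$ for $j=0,1,\dots,N$. Split $[0,1)$ into the $N$ half-open intervals $[i/N,(i+1)/N)$, $0\le i\le N-1$. By the pigeonhole principle two of these numbers, say those for $0\le j_1<j_2\le N$, land in the same interval. Put $q=j_2-j_1\in\{1,\dots,N\}$ and $a=\lfloor j_2\alpha\rfloor-\lfloor j_1\alpha\rfloor$. Then
\[
|q\alpha-a|=\bigl|\{j_2\alpha\}-\{j_1\alpha\}\bigr|<\frac1N .
\]
Irrationality makes this strict and nonzero: $q\alpha-a=0$ would force $\alpha=a/q\in\mathbb{Q}$. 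Dividing by $q$ gives $|\alpha-a/q|<1/(qN)\le q^{-2}$.

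Coprimality comes next. If $d=\gcd(a,q)>1$, replace $(a,q)$ by $(a/d,q/d)$. The left side $|\alpha-a/q|$ does not change, while the bound $q^{-2}$ only grows to $(q/d)^{-2}$. So we may assume $\gcd(a,q)=1$, and for each $N$ we have a coprime pair with $1\le q\le N$ and $0<|q\alpha-a|<1/N$.

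Infinitely many pairs is the only point needing care. Suppose there were only finitely many coprime pairs $(a_1,q_1),\dots,(a_m,q_m)$ satisfying the inequality. Each has $|q_i\alpha-a_i|>0$ by irrationality, so
\[
\delta=\min_i|q_i\alpha-a_i|>0 .
\]
Choose $N>1/\delta$. The construction above then yields a coprime pair with $|q\alpha-a|<1/N<\delta$. It also satisfies $|\alpha-a/q|<q^{-2}$, so it must be one of the $(a_i,q_i)$. But its value of $|q\alpha-a|$ is smaller than every $|q_i\alpha-a_i|$, a contradiction. Hence there are infinitely many such pairs.

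I expect no real obstacle. The only subtle point is this last step: the pigeonhole bound alone yields some pair for each $N$, and it is the strict positivity of $|q\alpha-a|$ (which is exactly where irrationality is used) that prevents the same finitely many pairs from recurring.
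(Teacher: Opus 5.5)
Your argument is correct, and it is the pigeonhole proof the paper alludes to; the paper gives no proof of this classical statement, only remarking that it follows from the pigeonhole principle or from continued fractions. Your two-stage structure carries over to the paper's function-field analogue, Theorem~\ref{thm:dirichlet}: Lemma~\ref{lem:box} plays the role of your pigeonhole step, and Step~2 of that proof plays the role of your $\delta$-argument, which uses irrationality to show that a fixed quotient cannot keep satisfying the sharpening bound.
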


While it is easy to prove the above statement using the pigeonhole principle or the continued fraction expansion of $\alpha$, understanding Diophantine approximation by fractions whose denominators are restricted to subsets of the positive integers, such as the set of primes, is generally challenging. The problem of Diophantine approximation with primes has a rich history and triggered the development of several important tools in analytic number theory. In this context, the question becomes: for which $\theta>0$ do there exist infinitely many primes $p$ such that
\begin{equation}\label{eq:classical}
\|p\alpha\|<p^{-\theta+\varepsilon},
\end{equation}
where $\|x\|$ is the distance of $x\in\R$ to the nearest integer? In the thirties of the last century, I.\,M. Vinogradov \cite{Vin} established the infinitude of primes $p$ satisfying \eqref{eq:classical} with $\theta=1/5$. Vaughan \cite{Vau} improved the exponent to $\theta=1/4$ by utilising his celebrated identity for the von Mangoldt function and refined Fourier analytic arguments. Harman \cite{Har83} developed a new sieve method which enabled him to obtain $\theta=3/10$. Jia and Harman made several improvements \cite{Jia1,Jia2,Har96}, and Heath-Brown and Jia \cite{HBJ} applied the Weil bound for Kloosterman sums to reach $\theta=16/49$, which is very close to $1/3$. Eventually, Matom\"aki \cite{Mat} achieved $\theta=1/3$ unconditionally by employing bounds for sums of Kloosterman sums. This is considered to be the limit of current technology. It was mentioned in \cite{HBJ} without proof that $\theta=1/3$ is not difficult to obtain under the Generalised Riemann Hypothesis (GRH) for Dirichlet $L$-functions. This proof was carried out in \cite{BaMo} by Baier and the author.

Exploring analogues of this problem in number fields is interesting, as novel ideas are needed to adapt the classical methods to this context. Baier \cite{Bai} extended the classical problem in a slightly generalised form to $\mathbb{Q}(i)$, and Harman \cite{Har19} established the full analogue of his exponent $\theta=7/22$ for $\mathbb{Q}(i)$. Utilising Harman's sieve method, Baier, Mazumder and Technau established versions of these results for quadratic number fields of class number one \cite{BaTe,BaMa,BMT}. Recently, Baier, Das and the author \cite{BDM} established the analogue of Matom\"aki's exponent $\theta=1/3$ for all quadratic number fields under GRH for their Hecke $L$-functions.

In the function field setting, the Riemann hypothesis for Hecke $L$-functions is known to hold due to Weil \cite{Wei48}, and therefore our result is unconditional. In \cite{BaMo}, Baier and the author established an analogue of Matom\"aki's result for the rational function field $k=\Fq(T)$ and its imaginary quadratic extensions, using the Riemann hypothesis for Hecke $L$-functions applied to Hecke characters of trivial infinity type. This is possible since the unit group of $\OK$ is finite in the imaginary quadratic case. The arguments in \cite{BaMo} do not easily carry over to real quadratic extensions $K$ of $k$ without introducing new ideas. The main obstacle is the presence of an infinite group of units in the integral closure $\OK$ of $\Fq[T]$ in $K$: for an ideal $\f$ of $\OK$, the image of $\OK^\times$ in $(\OK/\f)^\times$ can become very large, so that the Hecke characters of trivial infinity type used in \cite{BaMo} no longer suffice to detect residue classes. For this reason, in \cite{BaMo2}, Baier and the author used Vaughan's identity and exponential sums instead of $L$-functions, and obtained a result corresponding to Vaughan's exponent $\theta=1/4$ for real quadratic function fields of class number one.

In this article, we establish the analogue of Matom\"aki's result (exponent $\theta=1/3$) for all real quadratic function fields, without any restriction on the class number. Together with \cite{BaMo}, the analogue of Matom\"aki's theorem thus holds, for odd $q$, for $\Fq(T)$ and for all of its quadratic extensions. Our method is inspired by that in \cite{BDM} in the case of real quadratic number fields. To handle the above-mentioned obstruction coming from the infinitely many units, we use Hecke characters of non-trivial infinity type, which we describe explicitly for quadratic function fields. A pleasant new feature of the function field case is that the relevant infinity types form a finite set, so that the linear combinations of Hecke characters arising from Fourier analysis are exact (see the description of our method below). Below are the notations used throughout this paper, followed by the statement of our main result. This section concludes with a brief description of our method.

\subsection{General notations}
\begin{itemize}
\item Throughout, $q$ is a prime power, $A=\Fq[T]$, $k=\Fq(T)$ and $\kinf=\Fq((1/T))$ is the completion of $k$ at the place $\infty$, equipped with the absolute value $|x|=q^{\deg x}$ (see Section~\ref{sec:prelim}).
\item For a finite separable extension $K$ of $k$, we denote by $\OK$ the integral closure of $A$ in $K$, by $\OK^\times$ its group of units, and by $h_K=\#\Cl(\OK)$ the (ideal) class number.
\item If $\aid$ is a non-zero ideal of $\OK$, then $\N(\aid)=\#(\OK/\aid)$ denotes its norm. If $c\in\OK\setminus\{0\}$, we write $\N(c)=\N(c\OK)$.
\item Following usual convention, $\varepsilon$ denotes an arbitrarily small but fixed positive real number.
\item All $O$-constants and implied constants in $\ll$ are allowed to depend on $K$ and $\varepsilon$, unless stated otherwise. In Sections~\ref{sec:dirichlet} and~\ref{sec:proof}, a point $x\in\kinf^2$ is fixed, and statements of the form ``for $F$ large enough'' allow the threshold to depend on $x$ as well.
\end{itemize}

\subsection{Real quadratic function fields}
Now let $q$ be odd and $K=k(\sqrt D)$ a real quadratic extension of $k$, i.e.\ $D\in A$ is square-free of even degree with a square leading coefficient; without loss of generality $D$ is monic. In this context we use the following notations.
\begin{itemize}
\item We fix $\sqrt D\in\kinf$ and denote the two embeddings of $K$ into $\kinf$, the identity and conjugation, by
\[
\sigma_1(a+b\sqrt D):=a+b\sqrt D\quad\text{and}\quad\sigma_2(a+b\sqrt D):=a-b\sqrt D\qquad(a,b\in k).
\]
\item We write $\sigma(K):=\{(\sigma_1(x),\sigma_2(x)):x\in K\}\subset\kinf^2$.
\end{itemize}

We shall establish the following.

\begin{theorem}\label{thm:main}
Let $q$ be an odd prime power and $K$ a real quadratic extension of $k=\Fq(T)$ with arbitrary class number $h_K$. Then for any pair $(x_1,x_2)\in\kinf^2\setminus\sigma(K)$, there exist infinitely many non-zero principal prime ideals $\p$ in $\OK$ such that
\begin{equation}\label{eq:main}
\Big|x_i-\frac{\sigma_i(b)}{\sigma_i(\pi)}\Big|\le\N(\p)^{-2/3+\varepsilon}\qquad(i=1,2)
\end{equation}
for some generator $\pi$ of $\p$ and $b\in\OK$.
\end{theorem}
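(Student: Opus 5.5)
We follow the strategy of \cite{BDM} and \cite{BaMo}, now with Hecke characters of non-trivial infinity type. First, a Dirichlet-type approximation for $x=(x_1,x_2)\notin\sigma(K)$: we produce infinitely many $a,c\in\OK$, $c\ne0$, with $|x_i-\sigma_i(a)/\sigma_i(c)|\le |\sigma_i(c)|^{-1}F^{-1}$, where $|\sigma_i(c)|\asymp F$ and $\N(c)$ is roughly $F^2$. We get these from a pigeonhole argument on the lattice $\sigma(\OK)\subset\kinf^2$. Because the unit group has rank one, we also fix a fundamental unit $\eta$ and use multiplication by powers of $\eta$ to balance $|\sigma_1(c)|$ against $|\sigma_2(c)|$. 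Since $x\notin\sigma(K)$, these approximations have denominators $c$ of unbounded norm.

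Fix such a $c$ and a parameter $\Delta$, to be chosen later as a power of $\N(c)$. We want to count prime elements $\pi$ (one generator per principal prime $\p$) lying in a box $|\sigma_i(\pi)|\le N^{1/2}$ with $\N(\p)\asymp N$. For these $\pi$ we want $\pi\equiv r\bmod c$ for suitable classes $r$, so that $\sigma_i(\pi)x_i$ lies within $\Delta$ of $\sigma_i(\OK)$. Writing $\pi x=\pi a/c+\pi(x-a/c)$, the condition reduces to two requirements. The first is that $\pi a/c$ be close to $\sigma(\OK)$, which is a condition on $\pi a\bmod c$. The second is that the error $\pi(x-a/c)$ be small. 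The second requirement holds automatically once the box is small relative to $F$.

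The fractional-part condition modulo $\sigma(\OK)$ is a compact-open condition in $\kinf^2/\sigma(\OK)$. We detect it exactly with additive characters; in the function field setting the Fourier expansion of an indicator of a ball is a finite sum. After that we pass to multiplicative characters modulo $c$. This is where units obstruct us. A congruence $\pi\equiv r\bmod c$, together with the box condition on $\sigma(\pi)$, must be detected by Hecke characters $\chi$ modulo $c$ of arbitrary infinity type. Such a $\chi$ is a pair consisting of a character of $(\OK/c)^\times$ and a character of $\kinf^\times\times\kinf^\times$ that agree on units. Only finitely many infinity types are relevant, namely those of bounded conductor at $\infty$, so the orthogonality relations are exact. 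To restrict to principal primes we average over characters of the class group, which costs a factor of $h_K$.

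We then apply the Weil Riemann Hypothesis for the corresponding Hecke $L$-functions. Each non-principal $\chi$ gives $\sum_{\N(\p)=q^n}\chi(\p)\ll q^{n/2}\,\deg(\text{conductor})$, so the counting function of primes in a fixed residue class and box equals a main term plus an error of size $O(N^{1/2+\varepsilon})$, uniformly in the modulus. Summing over the admissible residue classes gives a main term $\asymp N\Delta^{2}\cdot(\text{density})$, up to an $h_K$ factor, against an error of order $N^{1/2+\varepsilon}\N(c)$ times the number of classes. Balancing as in \cite{BaMo}, with $N$ about $\N(c)^{3/2}$ and $\Delta^{2}$ about $N^{-2/3+\varepsilon}$, shows that the count is positive. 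This yields \eqref{eq:main} with $b$ the nearest lattice point. Since $\N(c)\to\infty$, we obtain infinitely many distinct $\p$.

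The main obstacle is the explicit construction of the Hecke characters of non-trivial infinity type for $K$. We must control their conductors and verify that the family of infinity types needed to detect the box $|\sigma_i(\pi)|\le N^{1/2}$, modulo the action of $\eta$, is finite and of bounded conductor. This is what makes the error term $N^{1/2+\varepsilon}$ uniform. We would try first to describe $\kinf^\times\times\kinf^\times/\overline{\langle\eta\rangle}$ as a product of a compact group of principal units of bounded level and a finite part. Its characters trivial on high-level principal units then form a finite set, to which the Riemann Hypothesis applies with explicit dependence on $\deg c$.
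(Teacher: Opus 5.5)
Your architecture is the paper's: Dirichlet approximation, reduction to residue classes modulo the denominator, finitely many infinity types to absorb the units, class group characters, and Weil's Riemann hypothesis. The quantitative heart is missing, though, and the count you describe is not positive. You apply the Riemann hypothesis to each admissible residue class separately, with error $O(N^{1/2+\varepsilon})$, and then add up over classes; here $N$ is the size of $\N(\p)$, as in your notation. Up to logarithms, there are about $\N(c)\Delta^2$ admissible classes, each containing about $N/\N(c)$ primes. So the main term is about $N\Delta^2$, while the error is at least $\N(c)\Delta^2N^{1/2}$; your stated error, with an extra factor $\N(c)$, is larger still. Positivity therefore forces $N^{1/2}\gg\N(c)$. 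Your second requirement $N^{1/2}/\N(c)\lesssim\Delta$ then gives $\Delta\gg1$, which is only the trivial bound $\N(\p)^{-1/2}$. With your choice $N\approx\N(c)^{3/2}$, the error exceeds the main term by at least a factor $\N(c)^{1/4}$. Incidentally, \eqref{eq:main} needs $\Delta\approx N^{-1/6}$, i.e.\ $\Delta^2\approx N^{-1/3}$, not $N^{-2/3}$.

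The missing idea is a square-root saving over the residue classes, obtained by bounding the class sums on average over characters rather than class by class. In the paper's notation ($\N(\p)=q^N$, $\N(\f)=q^F$, $\N(\Dd)=q^E$), the admissible classes are parametrised by the small elements $\kappa\in\cS$. The count becomes $\varphi(\f)^{-1}\sum_\chi\bigl(\sum_{\kappa\in\cS}\chi(a\kappa^{-1})\bigr)\bigl(\sum_{p}\chi(p)\bigr)$. The prime sums are bounded uniformly for $\chi\neq\chi_0$ by $\ll Fq^{N/2}/N$. Distinct $\kappa_1,\kappa_2\in\cS$ never satisfy $\kappa_2\kappa_1^{-1}\eqs1\bmod\f$, since otherwise $\Dd\f$ would divide $\kappa_2-\kappa_1$, whose norm is at most $q^{2y}<q^{F+E}$. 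Cauchy--Schwarz and orthogonality then give $\sum_\chi\bigl|\sum_{\kappa}\chi(a\kappa^{-1})\bigr|\le\varphi(\f)(\#\cS)^{1/2}$ (Lemma~\ref{lem:kappa}(i)). The total error is thus $(\#\cS)^{1/2}q^{N/2}$ instead of $\#\cS\,q^{N/2}$. Balancing this against $\#\cS\,q^N/\varphi(\f)$ is exactly what gives $\N(\f)\approx\N(\p)^{2/3}$, i.e.\ your $N\approx\N(c)^{3/2}$.

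Two secondary points also need repair. First, for $h_K>1$ you cannot make $a,c$ coprime, so the congruence lives modulo $\f=\gcd((a),(c))^{-1}(c)$, and you must ensure $\N(\f)\to\infty$ (Lemma~\ref{lem:sequence}), not just $\N(c)\to\infty$. Second, infinitely many distinct approximations do not follow from pigeonhole alone; see the example after Lemma~\ref{lem:box} and the rescaling step in Theorem~\ref{thm:dirichlet}. Finally, no wildly ramified infinity types are needed. The paper uses only $\eta_1=\eta_2=1$ together with the $2R$ twists $\psi^n$, which is exact on normalised generators (Proposition~\ref{prop:decomp}).
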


The above Theorem~\ref{thm:main} should be compared to the version of the Dirichlet approximation theorem for real quadratic function fields in Theorem~\ref{thm:dirichlet} below, which we will use in our proof and which gives the exponent $-1$ in place of $-2/3+\varepsilon$ without any restriction on the denominator. For suitably normalised generators $\pi$ we have $|\sigma_1(\pi)|\asymp|\sigma_2(\pi)|\asymp\N(\p)^{1/2}$, so \eqref{eq:main} is the precise analogue of Matom\"aki's exponent $\theta=1/3$: the bound $\N(\p)^{-2/3}=\N(\p)^{-(1+\theta)/2}$ corresponds to $|\alpha-b/p|\le p^{-1-\theta}$ in the classical setting. Theorem~\ref{thm:main} is the function field analogue of the main result of \cite{BDM} for real quadratic number fields, which, however, is conditional on GRH, whereas our result is unconditional.

\begin{remark}
For comparison, the main result of \cite{BaMo2} gives, for real quadratic function fields of class number one and $q>2^{12}$, the exponent $-\nu+\log_q\sqrt2+\varepsilon$ with $\nu=1/2+1/8=5/8$ in place of $-2/3+\varepsilon$, which is a weaker result since $5/8<2/3$. Theorem~\ref{thm:main} removes the restrictions on the class number and on $q$ (apart from $q$ being odd), and obtains the optimal exponent of current technology.
\end{remark}

\subsection{Description of our method}
To establish Theorem~\ref{thm:main}, we first approximate $(x_1,x_2)$ using the Dirichlet approximation theorem (Theorem~\ref{thm:dirichlet}), which we prove in a self-contained way for all real quadratic function fields. This is followed by a reduction of our problem to counting prime elements in unions of certain multiplicative residue classes modulo an ideal $\f$. Their membership of residue classes is then detected using Dirichlet characters modulo $\f$, and the principality of the prime ideals is detected using class group characters. The main contribution comes from the principal characters. To evaluate this main term, we count principal prime ideals and integral ideals in a given class with norm restricted by suitable quantities. To bound the error term, we need to estimate sums of Dirichlet characters over prime elements. We relate them to sums of Hecke characters over prime ideals, which can be handled by the Riemann hypothesis for Hecke $L$-functions. In the number field case \cite{BDM}, this hypothesis (GRH) had to be assumed; in the function field case, it is known due to Weil \cite{Wei48} (see also \cite[Ch.~9]{Ros}), so Theorem~\ref{thm:main} is unconditional.

As in the case of real quadratic number fields \cite{BDM}, the latter step is where the infinitely many units create difficulties: a Dirichlet character $\chi$ modulo $\f$ is in general not constant on the generators of a principal ideal, and there are generally not enough Hecke characters of trivial infinity type. We therefore need a precise description of the Hecke characters of non-trivial infinity type, which we develop in Section~\ref{sec:hecke} for global function fields and work out explicitly for real quadratic function fields. For real quadratic number fields, it was necessary in \cite{BDM} to approximate the relevant factors $|\sigma_1(p)/\sigma_2(p)|^{i\theta}$ by truncated Fourier series in the Gr\"o{\ss}encharaktere $|\sigma_1(p)/\sigma_2(p)|^{i\pi n/\log\epsilon}$, which required a delicate treatment of the tail using the spacing of the numbers $\log|\sigma_1(p)/\sigma_2(p)|$. In the function field case, the situation is more favourable. We show (Corollary~\ref{cor:finite}) that, up to norm twists, the infinity types of Hecke characters with given finite part form a finite set, parametrised by $n\bmod R$, where $R$ is the regulator of $K$. Consequently, for a prime element $p$ normalised with respect to the unit group, $\chi(p)$ is an exact finite linear combination of values of Hecke characters at the ideal $(p)$, with coefficients coming from a discrete Fourier transform on $\Z/2R\Z$ (Proposition~\ref{prop:decomp}). No truncation error arises. The remaining counting problems are handled using the ultrametric structure of $\kinf$, which makes lattice point counts elementary.

\begin{remark}[Other ideal classes]\label{rem:classes}
Theorem~\ref{thm:main} concerns principal prime ideals. As in \cite{BDM}, one may fix a class $C\in\Cl(\OK)$ and an integral ideal $\aid_0$ in $C^{-1}$, and consider approximations with denominators generating $\p\aid_0$ with $\p\in C$. We expect that our method applies with only notational changes, the class group characters now being evaluated at $C$ instead of the trivial class, but we do not carry this out here.
\end{remark}

\subsection{Use of AI tools}
The author used Claude (Anthropic, Claude Opus 5.5) as a research and writing assistant. The research problem, the overall strategy and the decisions on scope are the author's. Based on the author's material and instructions, Claude helped to work out the explicit description of Hecke characters in Section~\ref{sec:hecke}, proposed the exact decomposition of Proposition~\ref{prop:decomp}, assisted with the proofs in Sections~\ref{sec:dirichlet}--\ref{sec:proof}, including the self-contained proof of Theorem~\ref{thm:dirichlet}, and produced a first draft of the text. The author has checked all statements and proofs and takes full responsibility for the content.

\subsection{Organisation}
In Section~\ref{sec:prelim} we collect the necessary background on $\kinf$ and on quadratic function fields. Section~\ref{sec:hecke} is devoted to Hecke characters of global function fields, with an explicit description for real quadratic fields, their $L$-functions, and a prime number theorem for Hecke characters. Section~\ref{sec:dirichlet} contains a self-contained proof of the Dirichlet approximation theorem and Section~\ref{sec:counting} the counting lemmas. Theorem~\ref{thm:main} is proved in Section~\ref{sec:proof}. In Subsection~\ref{subsec:consistency} we show that the Hecke characters used in \cite{BaMo} for imaginary quadratic function fields are consistent with our general description: they are exactly the Hecke characters of trivial infinity type.

\section{Notation and preliminaries}\label{sec:prelim}

\subsection{The completion at infinity}
Let $q$ be a prime power, $A=\Fq[T]$, $k=\Fq(T)$ and $\kinf=\Fq((1/T))$. Every $x\in\kinf^\times$ can be written uniquely as
\begin{equation}\label{eq:decomp}
x=\sgn(x)\,T^{\deg x}\pu{x},\qquad \sgn(x)\in\Fq^\times,\ \deg x\in\Z,\ \pu{x}\in U^{(1)}:=1+T^{-1}\Fq[[T^{-1}]],
\end{equation}
so that $\sgn(x)$ is the leading coefficient and $\deg x$ the degree of the Laurent series $x$. The maps $\sgn$, $\deg$ and $\pu{\cdot}$ are homomorphisms. We put $|x|=q^{\deg x}$ and $|0|=0$. This absolute value is ultrametric: $|x+y|\le\max(|x|,|y|)$, with equality if $|x|\neq|y|$. For $x\in\kinf$ let $\|x\|=\min_{a\in A}|x-a|$. More generally, for $r\ge1$ we use the sup-norm $|(x_1,\dots,x_r)|=\max_i|x_i|$ on $\kinf^r$.

\subsection{Quadratic function fields}
From now on, $q$ is odd. Let $D\in A$ be a non-constant square-free polynomial and $K=k(\sqrt D)$. Let $\OK$ be the integral closure of $A$ in $K$. Then
\[
\OK=A+A\sqrt D
\]
(see \cite[p.~248]{Ros}), $\Fq$ is the full constant field of $K$, and $\OK$ is a Dedekind domain whose non-zero prime ideals correspond to the places of $K$ not lying above $\infty$. We write $\Norm(a+b\sqrt D)=a^2-b^2D\in k$ for $a,b\in k$, and for a non-zero ideal $\aid$ of $\OK$ we put $\N(\aid)=\#(\OK/\aid)$. If $\aid=(c)$ with $c\in\OK\setminus\{0\}$, then $\N(\aid)=|\Norm(c)|$, and we also write $\N(c)=\N(\aid)$. The norm $\N$ extends multiplicatively to fractional ideals and to $K^\times$.

The behaviour of $\infty$ in $K$ depends on $D$ (see e.g.\ \cite[Chapter~14]{Ros}):
\begin{itemize}
\item real quadratic case: $\deg D$ is even and $\sgn(D)$ is a square in $\Fq^\times$; then $\infty$ splits into two places $\infty_1,\infty_2$ of degree one;
\item inert case: $\deg D$ is even and $\sgn(D)$ is a non-square; then $\infty$ is inert, with one place $\infty_K$ of degree two;
\item ramified case: $\deg D$ is odd; then $\infty$ is ramified, with one place $\infty_K$ of degree one.
\end{itemize}
The last two cases are called imaginary quadratic. In all cases, the genus $g$ of $K$ is given by $\deg D\in\{2g+1,2g+2\}$.

\subsection{Real quadratic function fields}\label{subsec:real}
Let $K=k(\sqrt D)$ be real quadratic; replacing $D$ by $c^{-2}D$, where $c^2=\sgn(D)$, we may and do assume that $D$ is monic, so $\deg D=2g+2$. By Hensel's lemma, $D$ has a square root in $\kinf$, which we fix and denote by $\sqrt D$; then $|\sqrt D|=q^{g+1}$. The two embeddings of $K$ into $\kinf$ are
\[
\sigma_1(a+b\sqrt D)=a+b\sqrt D,\qquad\sigma_2(a+b\sqrt D)=a-b\sqrt D\qquad(a,b\in k),
\]
and they correspond to the two places $\infty_1,\infty_2$. For $c\in K^\times$ we have $\sigma_1(c)\sigma_2(c)=\Norm(c)$, hence
\begin{equation}\label{eq:norm-sigma}
|\sigma_1(c)|\,|\sigma_2(c)|=\N(c),\qquad \deg\sigma_1(c)+\deg\sigma_2(c)=\log_q\N(c).
\end{equation}
By Dirichlet's unit theorem for $S$-units (see \cite[Proposition~14.2]{Ros}), with $S=\{\infty_1,\infty_2\}$, $\OK^\times=\Fq^\times\times\epsilon^{\Z}$ for a fundamental unit $\epsilon$. If $u\in\OK^\times$, then $\Norm(u)\in A^\times=\Fq^\times$, so $\deg\sigma_2(u)=-\deg\sigma_1(u)$. If $\deg\sigma_1(\epsilon)=0$, then $\epsilon$ would have no poles at all, hence would be a constant, which is absurd. Replacing $\epsilon$ by $\epsilon^{-1}$ and then by $\epsilon/\sgn(\sigma_1(\epsilon))$, we may therefore normalise $\epsilon$ by
\begin{equation}\label{eq:eps-norm}
\sgn(\sigma_1(\epsilon))=1,\qquad R:=\deg\sigma_1(\epsilon)>0.
\end{equation}
The positive integer $R$ is the regulator of $K$. Since $\sgn$, $\deg$ and $\pu{\cdot}$ are multiplicative and trivial resp.\ the identity on constants, we have
\begin{equation}\label{eq:eps-sigma2}
\deg\sigma_2(\epsilon)=-R,\qquad\sgn(\sigma_2(\epsilon))=\Norm(\epsilon)\in\Fq^\times,\qquad\pu{\sigma_2(\epsilon)}=\pu{\sigma_1(\epsilon)}^{-1}.
\end{equation}

The following lemma is the function field analogue of a lemma in \cite[\S4]{BDM}; its proof is simpler because all quantities involved are integers.

\begin{lemma}[Normalised generators]\label{lem:normgen}
Let $K$ be real quadratic. Every non-zero principal ideal $\aid$ of $\OK$ has a unique generator $a$ such that
\begin{equation}\label{eq:normgen}
\sgn(\sigma_1(a))=1\quad\text{and}\quad -R<\deg\sigma_1(a)-\deg\sigma_2(a)\le R.
\end{equation}
We call it the normalised generator of $\aid$. If $\N(\aid)=q^n$, then
\begin{equation}\label{eq:normgen-size}
q^{(n-R)/2}\le|\sigma_i(a)|\le q^{(n+R)/2}\qquad(i=1,2).
\end{equation}
\end{lemma}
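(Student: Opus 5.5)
Fix one generator $a_0$ of $\aid$. Since $\OK$ is a domain, every generator of $\aid$ has the form $a=c\,\epsilon^m a_0$ with $c\in\Fq^\times$ and $m\in\Z$. The plan is to fix the two conditions in \eqref{eq:normgen} one at a time: first choose $m$, then $c$.

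\textbf{Choosing $m$.} Put $\delta(a):=\deg\sigma_1(a)-\deg\sigma_2(a)$. Constants have degree zero, and $\deg\sigma_1(\epsilon)=R$, $\deg\sigma_2(\epsilon)=-R$ by \eqref{eq:eps-norm} and \eqref{eq:eps-sigma2}. Since $\deg$ is a homomorphism,
\[
\delta(c\,\epsilon^m a_0)=\delta(a_0)+2Rm .
\]
This depends only on $m$, and as $m$ runs over $\Z$ it runs over the residue class of $\delta(a_0)$ modulo $2R$. A half-open interval $(-R,R]$ of length $2R$ contains exactly one integer from each residue class mod $2R$. Hence there is exactly one $m$ with $-R<\delta(a)\le R$.

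\textbf{Choosing $c$.} With $m$ fixed, we have $\sgn(\sigma_1(c\,\epsilon^m a_0))=c\cdot\sgn(\sigma_1(a_0))$, because $\sgn$ is multiplicative, $\sgn(\sigma_1(\epsilon))=1$, and $\sigma_1$ fixes constants. So there is a unique $c\in\Fq^\times$ making this equal to $1$, namely $c=\sgn(\sigma_1(a_0))^{-1}$.

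This gives existence and uniqueness together. The only point that needs care is that the parametrisation $(c,m)\mapsto c\,\epsilon^m a_0$ is injective. That follows from $\OK^\times=\Fq^\times\times\epsilon^{\Z}$ being a direct product.

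\textbf{Size bound.} Write $d_i=\deg\sigma_i(a)$. Since $a\in\OK$, $\N(\aid)=|\Norm(a)|$, so \eqref{eq:norm-sigma} gives $d_1+d_2=n$. The normalisation gives $-R<d_1-d_2\le R$. Solving,
\[
d_1=\frac{n+(d_1-d_2)}{2},\qquad d_2=\frac{n-(d_1-d_2)}{2},
\]
so both $d_1$ and $d_2$ lie in $[(n-R)/2,(n+R)/2]$. Applying $|x|=q^{\deg x}$ yields \eqref{eq:normgen-size}.

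There is no genuine obstacle. The main thing to check is that all degrees are integers, so the interval $(-R,R]$ catches exactly one representative mod $2R$.
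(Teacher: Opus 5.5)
Your proposal is correct and follows the paper's proof essentially verbatim: you fix $j$ (your $m$) from $\delta(c\epsilon^j a_0)=\delta(a_0)+2Rj$, then $c$ from the sign, and obtain the size bound from $d_1+d_2=n$ together with $-R<d_1-d_2\le R$. The injectivity remark is harmless but not actually needed, since uniqueness of the admissible pair $(c,m)$ already forces uniqueness of the generator.
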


\begin{proof}
Let $a_0$ be any generator. All generators are $c\epsilon^ja_0$ with $c\in\Fq^\times$, $j\in\Z$. By \eqref{eq:eps-norm} and \eqref{eq:eps-sigma2},
\[
\deg\sigma_1(c\epsilon^ja_0)-\deg\sigma_2(c\epsilon^ja_0)=\deg\sigma_1(a_0)-\deg\sigma_2(a_0)+2Rj,
\]
so there is exactly one $j$ for which this lies in $(-R,R]$, and $\sgn(\sigma_1(c\epsilon^ja_0))=c\sgn(\sigma_1(a_0))$ determines $c$ uniquely. For \eqref{eq:normgen-size}, put $d_i=\deg\sigma_i(a)$. Then $d_1+d_2=n$ by \eqref{eq:norm-sigma}, and $-R<d_1-d_2\le R$ gives $(n-R)/2<d_1\le(n+R)/2$ and $(n-R)/2\le d_2<(n+R)/2$.
\end{proof}

\begin{lemma}\label{lem:Vy}
For $y\in\Z$ let $V_y=\{\kappa\in\OK:|\sigma_1(\kappa)|\le q^y,\ |\sigma_2(\kappa)|\le q^y\}$. Then $V_y$ is an $\Fq$-vector space, and if $y\ge g+1$, then $\dim_{\Fq}V_y=2y+1-g$.
\end{lemma}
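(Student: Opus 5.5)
The plan is a direct, elementary count in the coordinates $\OK=A+A\sqrt D$, using the ultrametric property and that $q$ is odd. Riemann--Roch gives an independent check.

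\emph{Step 1: $V_y$ is an $\Fq$-vector space.} Both $\sigma_i$ are $\Fq$-linear, and constants have absolute value $1$ or $0$. The ultrametric inequality $|x+y|\le\max(|x|,|y|)$ then shows that $V_y$ is closed under addition and under scalar multiplication by $\Fq$. It also contains $0$.

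\emph{Step 2: change of coordinates.} Write $\kappa=a+b\sqrt D$ with $a,b\in A$. Then $\sigma_1(\kappa)=a+b\sqrt D$ and $\sigma_2(\kappa)=a-b\sqrt D$. Since $q$ is odd, $2\in\Fq^\times$, so
\[
a=\tfrac12(\sigma_1(\kappa)+\sigma_2(\kappa)),\qquad b\sqrt D=\tfrac12(\sigma_1(\kappa)-\sigma_2(\kappa)).
\]
By the ultrametric inequality, $\max(|\sigma_1(\kappa)|,|\sigma_2(\kappa)|)=\max(|a|,|b\sqrt D|)$. The inequality $\le$ holds directly, and $\ge$ follows from the two displayed identities, using $|1/2|=1$.

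Hence $\kappa\in V_y$ if and only if $|a|\le q^y$ and $|b|\,|\sqrt D|\le q^y$. Since $|\sqrt D|=q^{g+1}$, this says $\deg a\le y$ and $\deg b\le y-g-1$, with the convention $\deg 0=-\infty$. Therefore
\[
V_y=\{a+b\sqrt D:\ a\in A_{\le y},\ b\in A_{\le y-g-1}\},
\]
where $A_{\le m}$ denotes the polynomials of degree at most $m$. This is a direct sum because $1,\sqrt D$ form an $A$-basis of $\OK$.

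\emph{Step 3: count.} For $m\ge0$ we have $\dim_{\Fq}A_{\le m}=m+1$. The hypothesis $y\ge g+1$ guarantees $y-g-1\ge0$, so
\[
\dim V_y=(y+1)+(y-g)=2y+1-g.
\]
This is also exactly where the hypothesis enters: for smaller $y$ the $b$-part would vanish and the formula would fail.

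\emph{Check via Riemann--Roch.} Since $\infty_1,\infty_2$ have degree one with uniformiser $1/T$, we have $-v_{\infty_i}(\kappa)=\deg\sigma_i(\kappa)$. Elements of $K$ regular at all finite places are exactly the elements of $\OK$. So $V_y=L(y\infty_1+y\infty_2)$, a divisor of degree $2y>2g-2$, and Riemann--Roch gives $2y+1-g$ again, in fact already for $y\ge g$.

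There is no real obstacle. The only point needing care is the equivalence in Step 2, namely recovering the bounds on $|a|$ and $|b\sqrt D|$ from those on $|\sigma_i(\kappa)|$. This is where oddness of $q$ is used.
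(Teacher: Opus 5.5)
Your proof is correct and is essentially the paper's argument: you write $\kappa=a+b\sqrt D$, use $|2|=1$ and the ultrametric inequality to turn membership in $V_y$ into $\deg a\le y$ and $\deg b\le y-g-1$, and then count $(y+1)+(y-g)$. One small slip is in your side remark: at $y=g$ the $b$-part vanishes but the formula still holds, because $\dim_{\Fq}A_{\le -1}=0$; this agrees with your own Riemann--Roch check, so the hypothesis $y\ge g+1$ is convenient rather than sharp.
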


\begin{proof}
Write $\kappa=a+b\sqrt D$ with $a,b\in A$. Since $q$ is odd, $|2|=1$, and $2a=\sigma_1(\kappa)+\sigma_2(\kappa)$, $2b\sqrt D=\sigma_1(\kappa)-\sigma_2(\kappa)$. By the ultrametric inequality, $\kappa\in V_y$ if and only if $|a|\le q^y$ and $|b\sqrt D|\le q^y$, i.e.\ $\deg a\le y$ and $\deg b\le y-g-1$ (with the convention $\deg0=-\infty$). This describes an $\Fq$-vector space of dimension $(y+1)+(y-g)$.
\end{proof}

\subsection{Arithmetic functions}
For non-zero ideals of $\OK$ we use the M\"obius function $\mu$, Euler's function $\varphi(\aid)=\#(\OK/\aid)^\times$, the number of prime divisors $\omega(\aid)$ and the number of divisors $\tau(\aid)$. We have $\sum_{\mathfrak{d}\mid\aid}\mu(\mathfrak{d})\N(\mathfrak{d})^{-1}=\varphi(\aid)/\N(\aid)$.

\begin{lemma}\label{lem:arith}
Let $K$ be a global function field and $\OK$ as above. For every $\eta>0$ we have
\[
2^{\omega(\aid)}\le\tau(\aid)\ll_{\eta,K}\N(\aid)^{\eta}\quad\text{and}\quad\varphi(\aid)\gg_{\eta,K}\N(\aid)^{1-\eta}.
\]
\end{lemma}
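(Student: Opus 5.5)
We reduce both bounds to statements about prime ideal factorisations, exactly as in the classical proof for $\Z$. Write $\aid=\prod_{i=1}^{r}\p_i^{e_i}$ with distinct prime ideals $\p_i$ and $e_i\ge1$. Then $\omega(\aid)=r$, and $\tau(\aid)=\prod_i(e_i+1)$ follows from unique factorisation in the Dedekind domain $\OK$. We also have $\N(\aid)=\prod_i\N(\p_i)^{e_i}$ and $\varphi(\aid)=\N(\aid)\prod_i(1-\N(\p_i)^{-1})$. The last identity comes from the Chinese remainder theorem together with $\#(\OK/\p^e)^\times=\N(\p)^{e}(1-\N(\p)^{-1})$. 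The lower bound $2^{\omega(\aid)}\le\tau(\aid)$ is then immediate, since each factor $e_i+1\ge2$.

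For the divisor bound we write
\[
\frac{\tau(\aid)}{\N(\aid)^\eta}=\prod_i\frac{e_i+1}{\N(\p_i)^{\eta e_i}}.
\]
If $\N(\p_i)\ge 2^{1/\eta}$, the factor is at most $(e_i+1)/2^{e_i}\le1$. For the remaining primes, with $\N(\p)<2^{1/\eta}$, we use that $\N(\p)\ge q\ge2$ and bound $(e+1)/\N(\p)^{\eta e}\le(e+1)/2^{\eta e}\le C(\eta)$, a constant depending only on $\eta$. The key finiteness input is that only finitely many prime ideals of $\OK$ have norm below any given bound. This holds because they correspond to places of $K$ of bounded degree, and there are at most $[K:k]$ of them above each of the finitely many monic irreducibles of $A$ of bounded degree. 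Hence $\tau(\aid)\le C(\eta)^{M(\eta,K)}\N(\aid)^\eta$, where $M(\eta,K)$ counts the primes of norm $<2^{1/\eta}$.

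For $\varphi$, we need $\prod_i(1-\N(\p_i)^{-1})\gg_{\eta,K}\N(\aid)^{-\eta}$. Each factor satisfies $1-\N(\p)^{-1}\ge1/2$, so
\[
\prod_i(1-\N(\p_i)^{-1})\ge2^{-\omega(\aid)}\ge\tau(\aid)^{-1}\gg_{\eta,K}\N(\aid)^{-\eta},
\]
using the divisor bound just proved. Multiplying by $\N(\aid)$ gives the claim.

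The only point requiring care is the finiteness of the set of prime ideals of bounded norm in the general global function field setting. Everything else is a formal consequence of unique factorisation and the multiplicativity of $\N$, $\tau$ and $\varphi$.
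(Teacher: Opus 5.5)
Your proof is correct. The divisor bound is exactly the paper's argument: split the product according to whether $\N(\p)\ge 2^{1/\eta}$, and use that only finitely many primes have small norm.

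For $\varphi$ you take a slightly different route.
\begin{itemize}
\item The paper compares each factor directly. It uses that $1-1/x\ge x^{-\eta}$ for $x\ge x_0(\eta)$, so the primes of large norm contribute at least $\prod\N(\p)^{-\eta}\ge\N(\aid)^{-\eta}$. The finitely many primes of norm $<x_0(\eta)$ each contribute at least $1-1/q$.
\item You bound every factor below by $1/2$. You then feed the resulting inequality $\varphi(\aid)\ge 2^{-\omega(\aid)}\N(\aid)$ into the bound $2^{\omega(\aid)}\le\tau(\aid)\ll\N(\aid)^{\eta}$ you have just proved.
\end{itemize}
Your version is shorter and needs no second threshold or second finiteness argument, but it makes the $\varphi$ bound depend on the divisor bound. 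The paper's version treats the two estimates independently.

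Your explicit justification of the finiteness of prime ideals of bounded norm, via the primes of $A$ lying below them, fills in a step the paper leaves implicit.
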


\begin{proof}
Write $\aid=\prod\p^{e_\p}$. We have $\tau(\aid)\N(\aid)^{-\eta}=\prod_\p(e_\p+1)\N(\p)^{-\eta e_\p}$. If $\N(\p)\ge2^{1/\eta}$, the factor is at most $(e_\p+1)2^{-e_\p}\le1$. For each of the finitely many $\p$ with $\N(\p)<2^{1/\eta}$, the factor is bounded in terms of $\eta$, since $(e+1)q^{-\eta e}$ is bounded. This gives the first bound. For the second, $\varphi(\aid)/\N(\aid)=\prod_{\p\mid\aid}(1-\N(\p)^{-1})$. There is $x_0(\eta)$ such that $1-1/x\ge x^{-\eta}$ for $x\ge x_0(\eta)$, and there are only finitely many $\p$ with $\N(\p)<x_0(\eta)$, each contributing a factor $\ge1-1/q$.
\end{proof}

\section{Hecke characters of quadratic function fields}\label{sec:hecke}

In this section we describe the Hecke characters of global function fields in the classical, ideal-theoretic language which is used for number fields (see \cite[\S3.3]{Miy} and \cite[\S2]{BDM}), and we work this description out explicitly for real quadratic function fields. The overall shape is the same as for number fields: on principal ideals, a Hecke character is a Dirichlet character times an ``infinity part'', which comes from a continuous character of the product of the completions at the infinite places, and compatibility with the units restricts the parameters of the infinity part. The difference lies in the structure of these completions: they are non-archimedean local fields, so the r\^ole of $\R^\times$ and $\mathbb{C}^\times$ is played by groups of the shape $\Z\times\mathbb{F}^\times\times(\text{pro-}p\text{ group})$. As an important consequence, the infinity types of real quadratic function fields form, up to norm twists, a finite set (Corollary~\ref{cor:finite}); this is what makes our proof of Theorem~\ref{thm:main} work. Although much of this section is standard from the idelic point of view (see \cite[Ch.~VII]{Wei74}, \cite{Tat}, \cite[Ch.~9]{Ros}, \cite{Hay}), we have not found the explicit description in the literature in a form suitable for our purposes.

\subsection{Setup}
Let $q$ be odd and let $K/k$ be a finite separable extension with full constant field $\Fq$, $\OK$ the integral closure of $A$ in $K$, and $\N(\aid)=\#(\OK/\aid)$ for non-zero integral ideals $\aid$, extended multiplicatively. Let $I$ be the group of non-zero fractional ideals of $\OK$ and $P$ the subgroup of principal ones. For an integral ideal $\f$ let
\[
K(\f)=\{\alpha\in K^\times: v_\p(\alpha)=0\text{ for all prime ideals }\p\mid\f\},
\]
let $I(\f)$ be the group of fractional ideals coprime to $\f$ and $P(\f)=\{(\alpha):\alpha\in K(\f)\}$. For $\alpha,\beta\in K(\f)$ we write $\alpha\eqs\beta\bmod\f$ if $v_\p(\alpha-\beta)\ge v_\p(\f)$ for all $\p\mid\f$. A Dirichlet character modulo $\f$ is a character $\tilde\chi$ of $(\OK/\f)^\times$; it is extended to $K(\f)$ by $\chi(\alpha):=\tilde\chi(a)$ if $a\in\OK$ and $\alpha\eqs a\bmod\f$. This is well defined and multiplicative on $K(\f)$, and we call $\chi$ a Dirichlet character modulo $\f$ as well.

Let $S_\infty=\{w_1,\dots,w_t\}$ be the set of places of $K$ above $\infty$. For $w\in S_\infty$ let $K_w$ be the completion, $d_w$ the degree of $w$ (so the residue field of $K_w$ is $\mathbb{F}_{q^{d_w}}$), $v_w$ the normalised valuation and $|x|_w=q^{-d_wv_w(x)}$ the normalised absolute value. We fix embeddings $\sigma_w:K\hookrightarrow K_w$ and write $\alpha_w=\sigma_w(\alpha)$.

\begin{lemma}[Norm and absolute values at infinity]\label{lem:prodformula}
For every $\alpha\in K^\times$ we have $\prod_{w\in S_\infty}|\alpha_w|_w=\N((\alpha))$.
\end{lemma}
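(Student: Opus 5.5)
The plan is to reduce to the product formula for the global field $K$. We compare the product of the normalised absolute values over \emph{all} places of $K$ with the contributions of the finite places. For a finite place, i.e.\ a non-zero prime ideal $\p$ of $\OK$, the normalised absolute value is $|\alpha|_\p=\N(\p)^{-v_\p(\alpha)}$. This holds because the residue field $\OK/\p$ has $\N(\p)=q^{\deg\p}$ elements, and the normalisation $|x|_v=q^{-d_vv(x)}$ is the same convention the paper uses at the infinite places. The product formula $\prod_{v}|\alpha|_v=1$ for $\alpha\in K^\times$, taken over all places of $K$ (see \cite[Ch.~5]{Ros}), then gives
\[
\prod_{w\in S_\infty}|\alpha_w|_w=\prod_{\p}\N(\p)^{v_\p(\alpha)}=\N\Big(\prod_\p\p^{v_\p(\alpha)}\Big)=\N((\alpha)).
\]
The middle equality uses multiplicativity of $\N$ on fractional ideals. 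That multiplicativity comes from the Chinese remainder theorem for integral ideals, together with the extension to fractional ideals fixed in the setup.

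The key checks, in order, are as follows. First, the finite places of $K$ are exactly the places not above $\infty$, and these correspond bijectively to the non-zero primes of the Dedekind domain $\OK$, as recalled in Section~\ref{sec:prelim}. Second, $\deg\p=\log_q\#(\OK/\p)$, which holds because $\Fq$ is the full constant field of $K$. Third, $|\alpha_w|_w$ does not depend on the chosen embedding $\sigma_w$, since any two embeddings into $K_w$ inducing the same place differ by a continuous automorphism, and such an automorphism preserves the normalised absolute value.

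To avoid quoting the product formula in this exact normalisation, I would instead derive it from the degree of a principal divisor. For $\alpha\in K^\times$, the divisor $\operatorname{div}(\alpha)=\sum_v v(\alpha)\,v$ has degree $\sum_v d_v v(\alpha)=0$. Exponentiating $q^{-(\cdot)}$ gives the product formula directly.

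The main obstacle is only bookkeeping: making sure the normalisations at finite and infinite places match. As a sanity check I would test the case of the paper, $K$ real quadratic. There $d_{w_i}=1$ and $|x|_{w_i}=q^{\deg x}$ on $\kinf$, so the formula reduces to \eqref{eq:norm-sigma}. That agreement confirms the sign conventions. If a fully self-contained argument is preferred, an alternative is the norm map. One writes $\N((\alpha))=|\Norm_{K/k}(\alpha)|_{\text{on }A}$ and uses $\Norm_{K/k}(\alpha)=\prod_{w}N_{K_w/\kinf}(\alpha_w)$, together with $|N_{K_w/\kinf}(x)|=|x|_w$. This reduces the claim to the statement that for $f\in k^\times$ one has $\#(A/fA)=q^{\deg f}$, which is immediate for the rational function field.
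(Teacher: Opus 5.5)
Your proposal is correct and is essentially the paper's own proof. The paper also combines $|\alpha|_\p=\N(\p)^{-v_\p(\alpha)}$ at the finite places with the product formula over all places of $K$ to obtain $\prod_{w\in S_\infty}|\alpha_w|_w=\prod_\p\N(\p)^{v_\p(\alpha)}=\N((\alpha))$. Your extra checks (matching normalisations, and deriving the product formula from $\deg\mathrm{div}(\alpha)=0$) only make explicit what the paper takes for granted.
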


\begin{proof}
For a prime ideal $\p$, the normalised absolute value is $|\alpha|_\p=\N(\p)^{-v_\p(\alpha)}$. By the product formula $\prod_v|\alpha|_v=1$ over all places of $K$, we get $\prod_{w\in S_\infty}|\alpha_w|_w=\prod_\p\N(\p)^{v_\p(\alpha)}=\N((\alpha))$.
\end{proof}

\begin{lemma}[Units]\label{lem:units}
$\OK^\times=\Fq^\times\times E$, where $E$ is a free abelian group of rank $t-1$.
\end{lemma}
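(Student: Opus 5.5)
This is Dirichlet's unit theorem for $S$-units with $S=S_\infty$, and the plan is to reduce to it after two elementary identifications.

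\emph{Step 1: torsion.} An element $u\in\OK^\times$ of finite order is a root of unity in $K$. It is therefore algebraic over $\Fq$, and since $\Fq$ is the full constant field of $K$, we get $u\in\Fq^\times$. Conversely $\Fq^\times\subset\OK^\times$. Hence the torsion subgroup of $\OK^\times$ is exactly $\Fq^\times$.

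\emph{Step 2: the logarithmic map.} Since $\OK$ is the ring of functions regular away from $S_\infty$, the group $\OK^\times$ is the group of $S_\infty$-units, i.e.\ the $\alpha\in K^\times$ with $v_\p(\alpha)=0$ for all finite $\p$. Consider
\[
\lambda:\OK^\times\to\Z^t,\qquad u\mapsto(d_{w_1}v_{w_1}(u),\dots,d_{w_t}v_{w_t}(u)).
\]
Its kernel consists of the units with no zeros or poles anywhere. These are constants, so $\ker\lambda=\Fq^\times$. By Lemma~\ref{lem:prodformula}, or simply by the degree of a principal divisor being zero, the image lies in the hyperplane $H=\{x\in\Z^t:\sum x_i=0\}$, which has rank $t-1$. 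So the image is free of rank at most $t-1$. Choosing a complement $E$ (possible since the image is free) gives $\OK^\times=\Fq^\times\times E$ with $E$ free of rank at most $t-1$.

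\emph{Step 3: the rank is exactly $t-1$.} This is the main obstacle. For each $i<t$ I would show there is a unit with a zero at $w_i$ and support in $S_\infty$, i.e.\ find a principal divisor supported on $S_\infty$ with prescribed positive multiple of $w_i-\tfrac{d_{w_i}}{d_{w_t}}w_t$. The divisors $D_i=d_{w_t}w_i-d_{w_i}w_t$ have degree $0$ and lie in $\Pic^0(K)$, which is finite. Hence $h_iD_i$ is principal for some $h_i\ge1$, say $h_iD_i=\operatorname{div}(u_i)$. Each $u_i$ is an $S_\infty$-unit, and the vectors $\lambda(u_1),\dots,\lambda(u_{t-1})$ are linearly independent, because the $i$-th has nonzero entries only in coordinates $i$ and $t$. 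Thus the rank is $t-1$. Alternatively, one can simply cite \cite[Proposition~14.2]{Ros} as in Section~\ref{subsec:real}, with $S=S_\infty$.
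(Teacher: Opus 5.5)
Your proof is correct and follows the paper's route: the paper simply invokes Dirichlet's unit theorem for $S_\infty$-units \cite[Proposition~14.2]{Ros}, and your Steps~1--3 are exactly the standard proof behind that citation. Those steps are: torsion equals $\Fq^\times$ because $\Fq$ is the full constant field; the map $u\mapsto(d_wv_w(u))_w$ has kernel $\Fq^\times$ and image in the degree-zero hyperplane; and the rank is full because $\Pic^0(K)$ is finite, applied to the divisors $d_{w_t}w_i-d_{w_i}w_t$. The only difference is that you make the argument self-contained, using nothing beyond facts the paper already relies on elsewhere (the product formula of Lemma~\ref{lem:prodformula} and the finiteness of $\Pic^0(K)$).
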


\begin{proof}
This is Dirichlet's unit theorem for $S$-units with $S=S_\infty$; see \cite[Proposition~14.2]{Ros}.
\end{proof}

Thus $\Fq^\times$ plays the r\^ole of the roots of unity (for real quadratic number fields: $\pm1$) and $E$ the r\^ole of the free part generated by fundamental units.

\subsection{Continuous characters of the completions at infinity}
Fix $w\in S_\infty$ and a uniformiser $\pi_w$ of $K_w$. Every $x\in K_w^\times$ can be written uniquely as
\begin{equation}\label{eq:local-decomp}
x=\sgn_w(x)\,\pi_w^{v_w(x)}\pu{x}_w,\qquad\sgn_w(x)\in\mathbb{F}_{q^{d_w}}^\times,\ \pu{x}_w\in U_w^{(1)},
\end{equation}
where $\mathbb{F}_{q^{d_w}}^\times\subset K_w^\times$ via Teichm\"uller representatives and $U_w^{(m)}=\{x\in K_w:v_w(x-1)\ge m\}$ for $m\ge1$. The maps $\sgn_w$, $v_w$ and $\pu{\cdot}_w$ are homomorphisms, so
\[
K_w^\times\cong\Z\times\mathbb{F}_{q^{d_w}}^\times\times U_w^{(1)}.
\]
They depend on the choice of $\pi_w$, but for $c\in\Fq^\times$ we always have $\sgn_w(c)=c$, $v_w(c)=0$ and $\pu{c}_w=1$. We call $\sgn_w(x)$ the sign of $x$ (for $K_w=\Fq((1/T))$ and $\pi_w=1/T$ it is the leading coefficient of the Laurent series $x$) and $\pu{x}_w$ its principal unit part.

\begin{lemma}\label{lem:localchar}
Every continuous character $\tilde\chi_w:K_w^\times\to\T$ has the form
\begin{equation}\label{eq:localchar}
\tilde\chi_w(x)=\omega_w(\sgn_w(x))\,\eta_w(\pu{x}_w)\,|x|_w^{iv_w},
\end{equation}
where
\begin{enumerate}[label=(\arabic*)]
\item $\omega_w$ is a character of the finite cyclic group $\mathbb{F}_{q^{d_w}}^\times$;
\item $\eta_w$ is a character of $U_w^{(1)}$ which is trivial on $U_w^{(m_w)}$ for some $m_w\ge1$; it factors through the finite $p$-group $U_w^{(1)}/U_w^{(m_w)}$ of order $q^{d_w(m_w-1)}$;
\item $v_w\in\R$, determined modulo $\frac{2\pi}{d_w\log q}\Z$.
\end{enumerate}
Conversely, every such expression is a continuous character of $K_w^\times$.
\end{lemma}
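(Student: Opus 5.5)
The plan is to use the decomposition $K_w^\times\cong\Z\times\mathbb{F}_{q^{d_w}}^\times\times U_w^{(1)}$ coming from \eqref{eq:local-decomp}, and to determine the continuous characters of each factor separately. The first step is to check that this isomorphism is an isomorphism of topological groups. Here $\Z$ carries the discrete topology, $\mathbb{F}_{q^{d_w}}^\times$ is finite, and $U_w^{(1)}$ carries its profinite topology with basis of neighbourhoods of $1$ given by the $U_w^{(m)}$. This holds because $v_w$ is locally constant, the Teichm\"uller map and $\sgn_w$ are continuous, and the $U_w^{(m)}$ are open. A continuous character $\tilde\chi_w$ is then the product of its restrictions to the three factors, namely $\tilde\chi_w(x)=\tilde\chi_w(\sgn_w(x))\,\tilde\chi_w(\pi_w)^{v_w(x)}\,\tilde\chi_w(\pu{x}_w)$.

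Next I would identify each factor in turn. On the finite cyclic group $\mathbb{F}_{q^{d_w}}^\times$ the restriction is some character $\omega_w$, which gives (1). On $\Z$ the restriction is determined by the single value $\tilde\chi_w(\pi_w)=\e^{i\phi}\in\T$. Since $|x|_w=q^{-d_wv_w(x)}$, we have $|x|_w^{iv}=\e^{-i v d_w\log q\, v_w(x)}$, so it suffices to choose $v\in\R$ with $-v d_w\log q\equiv\phi\pmod{2\pi}$. Such a $v$ is unique modulo $\frac{2\pi}{d_w\log q}\Z$, which gives (3).

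The only genuinely topological step is the $U_w^{(1)}$ factor. Since $\T$ has no small subgroups, I would take the neighbourhood $\{z\in\T:|z-1|<1\}$ of $1$, which contains no nontrivial subgroup. By continuity some open subgroup $U_w^{(m_w)}$ maps into this neighbourhood, and because the image is a subgroup it must be trivial. So $\eta_w$ factors through $U_w^{(1)}/U_w^{(m_w)}$. For the order of this quotient, I would use the successive quotients $U_w^{(j)}/U_w^{(j+1)}\cong\mathbb{F}_{q^{d_w}}$ (additive group) via $1+\pi_w^j u\mapsto u\bmod\pi_w$. This gives order $q^{d_w(m_w-1)}$, a power of $p$, which is (2).

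For the converse, each of the three factors in \eqref{eq:localchar} is a homomorphism, being a composite of the homomorphisms $\sgn_w$, $\pu{\cdot}_w$, $v_w$ with characters. Each is also locally constant, hence continuous. Indeed, $\omega_w\circ\sgn_w$ and $|\cdot|_w^{iv_w}$ are constant on cosets of $U_w^{(1)}$, and $\eta_w\circ\pu{\cdot}_w$ is constant on cosets of $U_w^{(m_w)}$. I do not expect a real obstacle; the no-small-subgroups argument is the main point to state carefully.
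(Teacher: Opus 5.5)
Your proposal is correct and follows essentially the same route as the paper: decompose via \eqref{eq:local-decomp}, read off $\omega_w$ on the sign factor, obtain $\eta_w$ from the fact that the kernel on $U_w^{(1)}$ contains some $U_w^{(m_w)}$, and choose $v_w$ from the value $\tilde\chi_w(\pi_w)$. You also supply two details the paper leaves implicit: the no-small-subgroups argument behind ``continuity means that the kernel is open'', and the filtration count showing that $U_w^{(1)}/U_w^{(m_w)}$ has order $q^{d_w(m_w-1)}$.
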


\begin{proof}
By \eqref{eq:local-decomp} a character is determined by its restrictions to $\mathbb{F}_{q^{d_w}}^\times$ and $U_w^{(1)}$ and its value at $\pi_w$. The first restriction is some $\omega_w$. Continuity means that the kernel is open, so it contains a basic neighbourhood $U_w^{(m_w)}$ of $1$; this gives $\eta_w$. Finally, choose $v_w\in\R$ with $\tilde\chi_w(\pi_w)=|\pi_w|_w^{iv_w}=q^{-id_wv_w}$; the number $v_w$ is unique modulo $2\pi/(d_w\log q)$. Since $|x|_w=|\pi_w|_w^{v_w(x)}$, the formula \eqref{eq:localchar} follows. The converse is clear.
\end{proof}

\begin{remark}[Comparison with number fields]
For a real place, $\R^\times=\{\pm1\}\times\R_{>0}$ and the continuous characters are $\sgn(x)^u|x|^{iv}$ with $u\in\{0,1\}$, $v\in\R$. In \eqref{eq:localchar}, $\omega_w(\sgn_w(x))$ is the analogue of $\sgn(x)^u$ (and of $(x/|x|)^u$ at a complex place), and $|x|_w^{iv_w}$ is the analogue of $|x|^{iv}$. There are two differences:
\begin{itemize}
\item $v_w$ is only defined modulo $2\pi/(d_w\log q)$, since $|x|_w$ takes values in $q^{d_w\Z}$ only;
\item the factor $\eta_w$ has no archimedean counterpart: the connected group $\R_{>0}$ has only the characters $|x|^{iv}$, while the profinite group $U_w^{(1)}$ has infinitely many characters of finite order. Characters with $\eta_w\neq1$ are called wildly ramified at $w$.
\end{itemize}
\end{remark}

\subsection{Hecke characters on principal fractional ideals}
In analogy with the number field case, a Hecke character modulo $\f$ takes on principal fractional ideals the form
\begin{equation}\label{eq:hecke-principal}
\Hk((\alpha))=\chi(\alpha)\chi_\infty(\alpha)\qquad\text{for all }\alpha\in K(\f),
\end{equation}
where $\chi$ is a Dirichlet character modulo $\f$ (extended to $K(\f)$ as in Subsection~3.1), and the infinity part $\chi_\infty$ has the property that there is a continuous character
\[
\tilde\chi_\infty:\prod_{w\in S_\infty}K_w^\times\longrightarrow\T
\]
such that
\[
\chi_\infty(\alpha)=\tilde\chi_\infty\big((\alpha_w)_{w\in S_\infty}\big)\qquad\text{for all }\alpha\in K(\f).
\]
By Lemma~\ref{lem:localchar}, applied to each factor, the infinity part looks explicitly as follows:
\begin{equation}\label{eq:infpart}
\chi_\infty(\alpha)=\prod_{w\in S_\infty}\Big(\omega_w\big(\sgn_w(\alpha_w)\big)\,\eta_w\big(\pu{\alpha_w}_w\big)\,|\alpha_w|_w^{iv_w}\Big),
\end{equation}
where the parameters satisfy the following conditions:
\begin{enumerate}[label=(\arabic*)]
\item $\omega_w$ is a character of $\mathbb{F}_{q^{d_w}}^\times$ and $\eta_w$ is a character of $U_w^{(1)}/U_w^{(m_w)}$ for some $m_w\ge1$ (analogue of the exponents $u_j$ in the number field case);
\item $\sum_{w\in S_\infty}v_w=0$.
\end{enumerate}
As in the number field case, the second condition is not needed if we only demand that $\chi_\infty$ comes from a continuous character, but it can be forced as follows. If $\frac1t\sum_wv_w=\mu\neq0$, set $\tilde v_w=v_w-\mu$. Then $\sum_w\tilde v_w=0$ and, by Lemma~\ref{lem:prodformula},
\[
\prod_{w\in S_\infty}|\alpha_w|_w^{iv_w}=\N((\alpha))^{i\mu}\prod_{w\in S_\infty}|\alpha_w|_w^{i\tilde v_w}.
\]
Hence condition (2) can be forced at the expense of a factor $\N((\alpha))^{i\mu}$, which amounts to a vertical shift of the $L$-function by $i\mu$.

\begin{remark}
Since the $v_w$ are only determined modulo $2\pi/(d_w\log q)$, condition (2) does not remove the norm twist completely. For instance, if $t=2$ and $d_{w_1}=d_{w_2}=1$, the pairs $(v_1,v_2)$ and $(v_1+\pi/\log q,\,v_2-\pi/\log q)$ both satisfy condition (2), but the second one equals $(v_1,v_2)+(\mu,\mu)$ with $\mu=\pi/\log q$ modulo the periods, so the corresponding infinity parts differ by the norm twist $\N((\alpha))^{i\pi/\log q}=(-1)^{\deg\N((\alpha))}$. In general (all $d_w=1$), condition (2) normalises the norm twist only up to the finite group of twists $\N(\cdot)^{2\pi ij/(t\log q)}$, $j\in\Z$. This phenomenon has no counterpart for number fields; see Corollary~\ref{cor:finite} for its effect in the real quadratic case.
\end{remark}

If $\chi$ is given, then $\chi_\infty$ needs to be defined in such a way that
\begin{equation}\label{eq:unitcond}
\chi(u)\chi_\infty(u)=1\qquad\text{for all units }u\in\OK^\times,
\end{equation}
so that $\Hk((\alpha))=\chi(\alpha)\chi_\infty(\alpha)$ is well defined on the principal ideals $(\alpha)$. By Lemma~\ref{lem:units}, \eqref{eq:unitcond} has to be checked for $u\in\Fq^\times$ and for a basis of $E$. For constants $c\in\Fq^\times$ we have $\sgn_w(c)=c$, $\pu{c}_w=1$, $|c|_w=1$ for all $w$, so \eqref{eq:unitcond} for constants reads
\begin{equation}\label{eq:constcond}
\chi(c)\prod_{w\in S_\infty}\omega_w(c)=1\qquad\text{for all }c\in\Fq^\times,
\end{equation}
and it suffices to check this for a generator $c_0$ of the cyclic group $\Fq^\times$. This is the analogue of the condition $\chi\chi_\infty(-1)=1$, which fixed the parity of $u_1+u_2$ in the real quadratic number field case. We again say that $\Hk$ belongs to $\chi$.

The following proposition shows that this definition captures exactly the idelic Hecke characters, i.e.\ continuous characters of $\mathbb{A}_K^\times/K^\times$ (see \cite[Ch.~VII]{Wei74}); it is the reason why the infinity part must come from a continuous character of $\prod_wK_w^\times$.

\begin{proposition}\label{prop:idelic}
Let $\xi:\mathbb{A}_K^\times/K^\times\to\T$ be continuous, with components $\xi_v$, and let $\f$ be its finite conductor. Put $\Hk(\p)=\xi_\p(\varpi_\p)$ for $\p\nmid\f$, $\chi(\alpha)=\prod_{\p\mid\f}\xi_\p(\alpha)^{-1}$ and $\chi_\infty(\alpha)=\prod_{w\in S_\infty}\xi_w(\alpha_w)^{-1}$ for $\alpha\in K(\f)$. Then \eqref{eq:hecke-principal} holds. Conversely, every character $\Hk$ of $I(\f)$ satisfying \eqref{eq:hecke-principal} with $\chi_\infty$ as in \eqref{eq:infpart} arises in this way from a unique idelic Hecke character.
\end{proposition}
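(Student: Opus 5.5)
The plan is to pass through the standard dictionary between idèle class characters and ideal characters. Write $\mathbb{A}_K^\times=K_\infty^\times\times\prod'_\p K_\p^\times$ with $K_\infty^\times=\prod_{w\in S_\infty}K_w^\times$. Throughout I use that $\xi$ is trivial on the diagonally embedded $K^\times$, that each $\xi_\p$ with $\p\nmid\f$ is unramified, and that $\xi_\p$ is trivial on $U_\p^{(v_\p(\f))}$ for $\p\mid\f$.

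\emph{Forward direction.} Let $\alpha\in K(\f)$ and split the trivial value $1=\xi(\alpha)=\prod_v\xi_v(\alpha)$ into three groups of places.
\begin{itemize}
\item For $\p\nmid\f$, unramifiedness gives $\xi_\p(\alpha)=\xi_\p(\varpi_\p)^{v_\p(\alpha)}$. Multiplying over $\p\nmid\f$ gives $\Hk((\alpha))$, where $\Hk$ is extended multiplicatively to $I(\f)$.
\item For $\p\mid\f$, $\alpha$ is a $\p$-unit. Since $\xi_\p$ is trivial on $U_\p^{(v_\p(\f))}$, the value $\xi_\p(\alpha)$ depends only on $\alpha$ modulo $\p^{v_\p(\f)}$. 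Hence $\alpha\mapsto\prod_{\p\mid\f}\xi_\p(\alpha)^{-1}$ factors through $(\OK/\f)^\times$ via the Chinese remainder theorem, and it is multiplicative. It is therefore a Dirichlet character $\chi$ modulo $\f$ in the sense of Subsection~3.1, and its extension to $K(\f)$ is compatible with the relation $\eqs$.
\item The product $\prod_w\xi_w$ is a continuous character of $K_\infty^\times$, so its inverse is a valid $\tilde\chi_\infty$.
\end{itemize}
Rearranging $1=\Hk((\alpha))\chi(\alpha)^{-1}\chi_\infty(\alpha)^{-1}$ gives \eqref{eq:hecke-principal}.

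\emph{Converse.} Given $\Hk$ on $I(\f)$ with $\chi$ and $\tilde\chi_\infty$, first try to define $\xi$ on the open subgroup
\[
W=K_\infty^\times\times\prod_{\p\mid\f}U_\p^{(v_\p(\f))}\times\prod_{\p\nmid\f}K_\p^\times .
\]
Its elements are written $(x_\infty,(u_\p)_{\p\mid\f},(x_\p)_{\p\nmid\f})$, with $x_\p\in\OK_\p^\times$ for almost all $\p$. Set
\[
\xi=\tilde\chi_\infty(x_\infty)^{-1}\prod_{\p\nmid\f}\Hk(\p)^{v_\p(x_\p)}.
\]
This is a continuous character, since it is trivial on the open subgroup $\prod_{\p\nmid\f}\OK_\p^\times\times\prod_{\p\mid\f}U_\p^{(v_\p(\f))}$. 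To extend $\xi$ to $\mathbb{A}_K^\times$ and make it trivial on $K^\times$, I use the approximation theorem: every idèle can be written as $\alpha y$ with $\alpha\in K^\times$ and $y\in W$. Indeed, choose $\alpha\in K(\f)$ with $\alpha\eqs x_\p\bmod\p^{v_\p(\f)}$ for all $\p\mid\f$ at the $\f$-components (strong approximation, as $S_\infty\neq\emptyset$). Then define $\xi(\alpha y):=\xi(y)$.

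Well-definedness reduces to the following: if $\alpha\in K^\times\cap W$, then $\xi(\alpha)=1$. Such $\alpha$ satisfies $\alpha\in K(\f)$ and $\alpha\eqs1\bmod\f$, so $\chi(\alpha)=1$. Its finite part evaluates to $\Hk((\alpha))$, so $\xi(\alpha)=\tilde\chi_\infty(\alpha)^{-1}\Hk((\alpha))=\chi(\alpha)=1$ by \eqref{eq:hecke-principal}. With $\xi$ well defined, continuity is inherited from $W$, and evaluating at uniformisers recovers $\Hk(\p)=\xi_\p(\varpi_\p)$.

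\emph{Recovering the components.} The local components at $\p\mid\f$ must reproduce $\chi^{-1}$, and the $\infty$-components must reproduce $\tilde\chi_\infty^{-1}$ on $\alpha$. This follows by rerunning the forward computation. One point needs care: the finite conductor of $\xi$ divides $\f$ but might be strictly smaller. In that case the recovered $\chi$ is the character induced from the conductor, and I will phrase the statement so that $\f$ is any modulus for $\xi$.

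\emph{Uniqueness.} Two idelic characters with the same $\Hk$, $\chi$ and $\chi_\infty$ agree on $K^\times W$. Since $\mathbb{A}_K^\times=K^\times W$, they are equal.

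The main obstacle I expect is exactly this well-definedness check, together with the density statement $\mathbb{A}_K^\times=K^\times W$. The latter needs strong approximation (or simply the Chinese remainder theorem in $\OK$ combined with the fact that $I(\f)$ surjects onto the ideal group). A second obstacle is the bookkeeping of whether $\tilde\chi_\infty$ appears on $\alpha$ or on $\alpha^{-1}$, which must be made consistent with the inverses in the definitions of $\chi$ and $\chi_\infty$.
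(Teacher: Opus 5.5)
Your proposal is correct and is essentially the paper's proof. The forward direction splits $\prod_v\xi_v(\alpha)=1$ exactly as you do. The converse moves an idèle into the open subgroup $W$ by approximation and checks well-definedness via $\chi(\delta)=1$ for $\delta\eqs1\bmod\f$ together with \eqref{eq:hecke-principal}. Your explicit uniqueness argument, and your remark that in the converse $\f$ should be allowed to be any modulus rather than the exact conductor, handle points the paper leaves implicit. One small fix is needed: since $x_\p$ need not be a unit at $\p\mid\f$, the element $\alpha$ must satisfy $\alpha^{-1}x_\p\in U_\p^{(v_\p(\f))}$, so in general $\alpha\notin K(\f)$. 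The ordinary approximation theorem at the finitely many $\p\mid\f$ provides such an $\alpha$; strong approximation is not needed.
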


\begin{proof}
Since $\xi_\p$ is trivial on $\mathcal{O}_\p^\times$ for $\p\nmid\f$, we have $\xi_\p(\alpha)=\Hk(\p)^{v_\p(\alpha)}$. For $\p\mid\f$ and $\alpha\in K(\f)$, $\xi_\p(\alpha)$ depends only on $\alpha$ modulo $1+\p^{v_\p(\f)}$, so $\chi$ is a Dirichlet character modulo $\f$. As $\xi$ is trivial on $K^\times$, $1=\prod_v\xi_v(\alpha)=\Hk((\alpha))\chi(\alpha)^{-1}\chi_\infty(\alpha)^{-1}$, which is \eqref{eq:hecke-principal}.

Conversely, let $x\in\mathbb{A}_K^\times$. By the approximation theorem choose $\beta\in K^\times$ with $\beta x_\p\in1+\p^{v_\p(\f)}\mathcal{O}_\p$ for all $\p\mid\f$, put $y=\beta x$ and define $\xi(x)=\Hk\big(\prod_{\p\nmid\f}\p^{v_\p(y_\p)}\big)\prod_w\tilde\chi_w(y_w)^{-1}$, where $\tilde\chi_w$ are the local factors in \eqref{eq:infpart}. If $\beta'$ is another choice, then $\delta=\beta'/\beta\in K(\f)$ satisfies $\delta\eqs1\bmod\f$, so $\chi(\delta)=1$ and the two definitions differ by $\Hk((\delta))\chi_\infty(\delta)^{-1}=\chi(\delta)=1$. Thus $\xi(x)$ is well defined; it is a homomorphism, trivial on an open subgroup, and trivial on $K^\times$ (take $\beta=x^{-1}$ for $x\in K^\times$).
\end{proof}

\subsection{The case of real quadratic extensions}
Now let $K=k(\sqrt D)$ be real quadratic, with $D$ monic of degree $2g+2$, embeddings $\sigma_1,\sigma_2$, fundamental unit $\epsilon$ normalised by \eqref{eq:eps-norm} and regulator $R$ as in Subsection~\ref{subsec:real}. Here $K_{\infty_1}=K_{\infty_2}=\kinf$, and we use the uniformiser $1/T$ at both places, so that $\sgn$, $\deg$ and $\pu{\cdot}$ are as in \eqref{eq:decomp} and $|x|=q^{\deg x}$. In the number field case one normalises $\epsilon>1$; the analogue is \eqref{eq:eps-norm}, and $\log|\sigma_1(\epsilon)|=R\log q$ plays the r\^ole of $\log\epsilon$. The second identity in \eqref{eq:eps-sigma2} is the analogue of $\sgn(\sigma_2(\epsilon))=N(\epsilon)=\pm1$ for real quadratic number fields.

\emph{The shape of $\chi_\infty$.} Here $t=2$ and $d_{\infty_1}=d_{\infty_2}=1$, so condition (2) means $v_{\infty_1}=-v_{\infty_2}=:v$, and \eqref{eq:infpart} takes the form
\begin{equation}\label{eq:infpart-real}
\chi_\infty(\alpha)=\Big|\frac{\sigma_1(\alpha)}{\sigma_2(\alpha)}\Big|^{iv}\omega_1\big(\sgn(\sigma_1(\alpha))\big)\,\omega_2\big(\sgn(\sigma_2(\alpha))\big)\,\eta_1\big(\pu{\sigma_1(\alpha)}\big)\,\eta_2\big(\pu{\sigma_2(\alpha)}\big)
\end{equation}
for a real number $v$, characters $\omega_1,\omega_2$ of $\Fq^\times$ (replacing $u_1,u_2\in\{0,1\}$) and characters $\eta_1,\eta_2$ of $U^{(1)}$ of finite level. Note that
\[
\Big|\frac{\sigma_1(\alpha)}{\sigma_2(\alpha)}\Big|^{iv}=\exp\big(iv\log q\,(\deg\sigma_1(\alpha)-\deg\sigma_2(\alpha))\big).
\]
The unit group $\OK^\times$ is generated by $\epsilon$ and a generator $c_0$ of $\Fq^\times$, and therefore it suffices to ensure that $\chi\chi_\infty(\epsilon)=1$ and $\chi\chi_\infty(c_0)=1$.

\emph{The condition at $\epsilon$.} Suppose that
\begin{equation}\label{eq:gamma}
\chi(\epsilon)\,\omega_2\big(\Norm(\epsilon)\big)\,\frac{\eta_1(\pu{\sigma_1(\epsilon)})}{\eta_2(\pu{\sigma_1(\epsilon)})}=\exp(-2\pi i\gamma),
\end{equation}
i.e.
\[
\gamma=-\frac{1}{2\pi}\arg\Big(\chi(\epsilon)\,\omega_2\big(\Norm(\epsilon)\big)\frac{\eta_1(\pu{\sigma_1(\epsilon)})}{\eta_2(\pu{\sigma_1(\epsilon)})}\Big)\in[0,1).
\]
The left-hand side is a product of values of characters of finite order, so $\gamma\in\mathbb{Q}$. By \eqref{eq:eps-norm} we have $\omega_1(\sgn(\sigma_1(\epsilon)))=\omega_1(1)=1$, and by \eqref{eq:eps-sigma2} the remaining sign and principal-unit factors of $\chi_\infty(\epsilon)$ are exactly those in \eqref{eq:gamma}. To ensure that $\chi\chi_\infty(\epsilon)=1$, it therefore suffices to demand that
\[
\Big|\frac{\sigma_1(\epsilon)}{\sigma_2(\epsilon)}\Big|^{iv}=\exp(2\pi i\gamma),
\]
i.e., since $|\sigma_1(\epsilon)/\sigma_2(\epsilon)|=q^{2R}=|\sigma_1(\epsilon)|^2$,
\[
|\sigma_1(\epsilon)|^{2iv}=\exp(2ivR\log q)=\exp(2\pi i\gamma).
\]
This is equivalent to
\begin{equation}\label{eq:v}
v=\frac{\pi(n+\gamma)}{R\log q}=\frac{\pi(n+\gamma)}{\log|\sigma_1(\epsilon)|}\qquad\text{for some }n\in\Z.
\end{equation}

\emph{The condition at constants.} By \eqref{eq:constcond}, we need
\[
\chi(c)\,\omega_1(c)\,\omega_2(c)=1\qquad\text{for all }c\in\Fq^\times,
\]
i.e.\ $\omega_2=\overline{\chi|_{\Fq^\times}\omega_1}$. Thus $\omega_1$ can be chosen freely among the $q-1$ characters of $\Fq^\times$, and then $\omega_2$ is determined. This is the analogue of the rule ``$u_1=u_2$ if $\chi(-1)=1$ and $u_1\neq u_2$ if $\chi(-1)=-1$'': there, $u_1\in\{0,1\}$ is free and $u_2$ is determined by the parity of $\chi$.

So, altogether, the infinity part is of the form
\begin{multline}\label{eq:infpart-full}
\chi_\infty(\alpha)=\Big|\frac{\sigma_1(\alpha)}{\sigma_2(\alpha)}\Big|^{i\pi(n+\gamma)/\log|\sigma_1(\epsilon)|}\cdot\omega_1\big(\sgn(\sigma_1(\alpha))\big)\\
\cdot\omega_2\big(\sgn(\sigma_2(\alpha))\big)\cdot\eta_1\big(\pu{\sigma_1(\alpha)}\big)\cdot\eta_2\big(\pu{\sigma_2(\alpha)}\big)
\end{multline}
with $\omega_2=\overline{\chi|_{\Fq^\times}\omega_1}$ and $\gamma$ as in \eqref{eq:gamma}. Equivalently,
\begin{equation}\label{eq:size-factor}
\Big|\frac{\sigma_1(\alpha)}{\sigma_2(\alpha)}\Big|^{i\pi(n+\gamma)/\log|\sigma_1(\epsilon)|}=\exp\Big(\frac{i\pi(n+\gamma)}{R}\big(\deg\sigma_1(\alpha)-\deg\sigma_2(\alpha)\big)\Big).
\end{equation}
Formula \eqref{eq:infpart-full} is the precise analogue of the corresponding formula for real quadratic number fields (see \cite[\S2]{BDM}), with $\log\epsilon$ replaced by $\log|\sigma_1(\epsilon)|$. We summarise:

\begin{theorem}\label{thm:hecke-real}
Let $K$ be a real quadratic function field as above and $\chi$ a Dirichlet character modulo $\f$. The Hecke characters on $P(\f)$ belonging to $\chi$ are, up to a factor $\N(\cdot)^{i\mu}$, exactly the maps $(\alpha)\mapsto\chi(\alpha)\chi_\infty(\alpha)$ with $\chi_\infty$ as in \eqref{eq:infpart-full}, where $\omega_1$ is any character of $\Fq^\times$, $\eta_1,\eta_2$ are any characters of $U^{(1)}$ of finite level, and $n\in\Z$.
\end{theorem}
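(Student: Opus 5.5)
We prove both directions: every character of the stated shape is a Hecke character belonging to $\chi$, and every such Hecke character has this shape up to a norm twist. By definition, a Hecke character on $P(\f)$ belonging to $\chi$ is a map $(\alpha)\mapsto\chi(\alpha)\chi_\infty(\alpha)$. Here $\chi_\infty$ is the restriction to $K(\f)$ of a continuous character $\tilde\chi_\infty$ of $\kinf^\times\times\kinf^\times$. The map must be well defined on principal ideals, i.e.\ satisfy \eqref{eq:unitcond}.

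\textbf{Shape of $\chi_\infty$.} Apply Lemma~\ref{lem:localchar} at each of the two places $\infty_1,\infty_2$, using the uniformiser $1/T$ at both. This writes $\tilde\chi_\infty$ as a product of two factors of the form \eqref{eq:localchar}, with parameters $\omega_1,\omega_2$, $\eta_1,\eta_2$ and $v_1,v_2$. Put $\mu=(v_1+v_2)/2$. Lemma~\ref{lem:prodformula} gives
\[
|\sigma_1(\alpha)|^{iv_1}|\sigma_2(\alpha)|^{iv_2}=\N((\alpha))^{i\mu}\,|\sigma_1(\alpha)/\sigma_2(\alpha)|^{iv}
\]
with $v=(v_1-v_2)/2$. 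So after removing the factor $\N(\cdot)^{i\mu}$, $\chi_\infty$ has the form \eqref{eq:infpart-real}. Conversely, every expression \eqref{eq:infpart-real} comes from a continuous character of $\kinf^\times\times\kinf^\times$, by the converse part of Lemma~\ref{lem:localchar}.

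\textbf{Unit condition.} By Lemma~\ref{lem:units} (or the description $\OK^\times=\Fq^\times\times\epsilon^\Z$), condition \eqref{eq:unitcond} is equivalent to the two conditions $\chi\chi_\infty(c_0)=1$ and $\chi\chi_\infty(\epsilon)=1$.

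For the constants, recall that $\sgn(c)=c$, $\pu{c}=1$ and $\deg c=0$. Hence the first condition is exactly $\chi(c)\omega_1(c)\omega_2(c)=1$ on $\Fq^\times$, i.e.\ $\omega_2=\overline{\chi|_{\Fq^\times}\omega_1}$, with $\omega_1$ free.

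For $\epsilon$, use \eqref{eq:eps-norm} and \eqref{eq:eps-sigma2}:
\begin{itemize}
\item $\sgn\sigma_1(\epsilon)=1$ and $\sgn\sigma_2(\epsilon)=\Norm(\epsilon)$;
\item $\pu{\sigma_2(\epsilon)}=\pu{\sigma_1(\epsilon)}^{-1}$;
\item $\deg\sigma_1(\epsilon)-\deg\sigma_2(\epsilon)=2R$.
\end{itemize}
Therefore $\chi\chi_\infty(\epsilon)=\exp(-2\pi i\gamma)\exp(2ivR\log q)$, with $\gamma$ defined by \eqref{eq:gamma}. This equals $1$ if and only if $2vR\log q\in2\pi(\gamma+\Z)$, which is exactly \eqref{eq:v}. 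Substituting into \eqref{eq:infpart-real} and using \eqref{eq:size-factor} gives \eqref{eq:infpart-full}.

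\textbf{The two directions.} For the converse, take any $\omega_1$, any $\eta_1,\eta_2$ of finite level and any $n\in\Z$, and define $\omega_2$, $\gamma$ and $v$ as above. Both unit conditions then hold, so $\chi\chi_\infty$ is constant on generators and defines a character of $P(\f)$ that belongs to $\chi$. For the forward direction, the argument above shows that an arbitrary Hecke character belonging to $\chi$ has this shape after removing $\N(\cdot)^{i\mu}$. There is no real obstacle here. The only subtle point is that $v$ is only defined modulo $\pi/\log q$, the period of $|\sigma_1/\sigma_2|^{iv}$, since $\deg\sigma_1-\deg\sigma_2$ changes in steps of $2$ as $v_1$ and $v_2$ move by their periods. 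This ambiguity is harmless: it only shifts $n$, or is absorbed into the norm twist, so the description "up to $\N(\cdot)^{i\mu}$, with $n\in\Z$" is unaffected.
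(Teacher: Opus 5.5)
Your proposal is correct and follows the paper's argument: you decompose $\tilde\chi_\infty$ via Lemma~\ref{lem:localchar}, split off $\N(\cdot)^{i\mu}$ with $\mu=(v_1+v_2)/2$, and impose the unit condition at $c_0$ and $\epsilon$ using \eqref{eq:eps-norm} and \eqref{eq:eps-sigma2}, which yields $\omega_2=\overline{\chi|_{\Fq^\times}\omega_1}$ and \eqref{eq:v}. One small inaccuracy in your closing remark is harmless: $\pi/\log q$ is not literally a period of $|\sigma_1(\alpha)/\sigma_2(\alpha)|^{iv}$, since shifting $v$ by it multiplies by $\N((\alpha))^{i\pi/\log q}=(-1)^{\deg\N((\alpha))}$, which is the shift $n\mapsto n+R$ of Corollary~\ref{cor:finite}; the forward direction works for any choice of representatives $v_1,v_2$, so the proof does not need that remark.
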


\begin{proof}
The derivation above shows that \eqref{eq:unitcond} holds for $\epsilon$ and $c_0$, hence for all units, if and only if the parameters are as stated; the reduction to $\sum_wv_w=0$ costs a factor $\N(\cdot)^{i\mu}$. The resulting map is multiplicative on $K(\f)$ and trivial on units, hence a character of $K(\f)/\OK^\times\cong P(\f)$.
\end{proof}

\begin{corollary}[Finiteness of the parameter $n$]\label{cor:finite}
Fix $\chi,\omega_1,\eta_1,\eta_2$. Then \eqref{eq:infpart-full} yields exactly $2R$ distinct characters of $P(\f)$, indexed by $n\bmod2R$, and the characters for $n$ and $n+R$ differ by the factor $\N((\alpha))^{i\pi/\log q}=(-1)^{\deg\N((\alpha))}$. In particular, modulo norm twists there are exactly $R$ choices of $n$, and every Hecke character of $K$ is a character of finite order times a norm twist $\N(\cdot)^{i\mu}$.
\end{corollary}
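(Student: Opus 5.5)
The plan is to read everything off the size factor \eqref{eq:size-factor}. For $\alpha\in K(\f)$ put $\delta(\alpha):=\deg\sigma_1(\alpha)-\deg\sigma_2(\alpha)\in\Z$. With $\chi,\omega_1,\eta_1,\eta_2$ fixed, the quantities $\gamma$ (from \eqref{eq:gamma}) and $\omega_2=\overline{\chi|_{\Fq^\times}\omega_1}$ are fixed, so all factors of $\chi\chi_\infty$ in \eqref{eq:infpart-full} are independent of $n$ except
\[
\exp\Big(\frac{i\pi(n+\gamma)}{R}\,\delta(\alpha)\Big).
\]
Hence the characters for $n$ and $n'$ differ by the character $(\alpha)\mapsto\exp\big(i\pi(n'-n)\delta(\alpha)/R\big)$ of $P(\f)$. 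This depends only on $n'-n\bmod 2R$ because $\delta(\alpha)\in\Z$. So there are at most $2R$ characters, indexed by $n\bmod 2R$.

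For distinctness, I need that $\exp(i\pi m\delta(\alpha)/R)=1$ for all $\alpha\in K(\f)$ forces $m\equiv0\bmod 2R$. This holds if $\delta$ takes the value $1$, or more generally values with $\gcd=1$, on $K(\f)$. Here $\delta(\alpha)\equiv\log_q\N(\alpha)\pmod 2$ by \eqref{eq:norm-sigma}. Since $\delta(\alpha\beta)=\delta(\alpha)+\delta(\beta)$, the image $\delta(K(\f))$ is a subgroup of $\Z$, so it suffices to produce one $\alpha\in K(\f)$ with $\delta(\alpha)$ odd.

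To find such an $\alpha$, take $\alpha=a+\sqrt D$ with $a\in A$ of large degree, chosen so that $\alpha$ is a unit modulo every $\p\mid\f$. This is possible by choosing $a$ in suitable residue classes modulo the finitely many primes below $\f$, using CRT in $A$. Writing $s=\sqrt D$ with $\deg s=g+1$, choose $\deg a=g+1$ and $\sgn(a)=-1$, so that $a+s$ has cancellation. Arranging its exact degree to be of the right parity needs care, and I expect this to be the main obstacle.

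A cleaner alternative for the obstacle is to note that $\delta(\alpha)$ mod $2$ equals $\deg\N(\alpha)$ mod $2$, and simply take $\alpha\in A$ coprime to $\f$ with $\deg\alpha$ odd, for example an irreducible of odd degree not dividing $\N(\f)$. Then $\delta(\alpha)=0$, which gives no information, so this fails. Instead I would use elements with odd norm degree: $\alpha=a+b\sqrt D$ with $\deg\N(\alpha)$ odd, which exists since $\Pic$ of degree-one divisors is nonempty, or since one can just pick $a$ so that $a^2-D$ has odd degree by forcing cancellation of leading terms.

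For the twist statement, the ratio for $n$ and $n+R$ is $\exp(i\pi\delta(\alpha))=(-1)^{\delta(\alpha)}=(-1)^{\deg\N((\alpha))}$, which equals $\N((\alpha))^{i\pi/\log q}$. So modulo norm twists exactly $R$ classes remain. Distinctness modulo twists: if $\exp(i\pi m\delta/R)=\N^{i\mu}$ on $P(\f)$, evaluating at $\alpha\in A\cap K(\f)$, where $\delta=0$, forces $q^{i\mu\deg\N}=1$ with even $\deg\N$ unrestricted. Combined with the odd-$\delta$ element, this gives $m\equiv0$ or $R\bmod 2R$.

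Finally, finite order follows because $\gamma\in\mathbb{Q}$ and every other factor in \eqref{eq:infpart-full} has finite order, while $h_K<\infty$ extends the statement from $P(\f)$ to $I(\f)$. Any extra norm twist is absorbed by the factor $\N(\cdot)^{i\mu}$ of Theorem~\ref{thm:hecke-real}.
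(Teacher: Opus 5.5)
Your skeleton matches the paper's: the ratio of the characters for $n$ and $n'$ is $(\alpha)\mapsto\exp(i\pi(n'-n)\delta(\alpha)/R)$, the factor for $n'=n+R$ is $(-1)^{\delta(\alpha)}=(-1)^{\deg\N((\alpha))}$, and finite order follows from $\gamma\in\mathbb{Q}$ together with the finiteness of $I(\f)/P(\f)$. The gap is in the distinctness step.

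The inference ``$\delta(K(\f))$ is a subgroup of $\Z$, so it suffices to produce one $\alpha\in K(\f)$ with $\delta(\alpha)$ odd'' does not hold. Since $\delta(\epsilon)=2R$, the image is $d\Z$ for some $d\mid 2R$, and you need $d=1$. An odd value $d_0$ only gives $d\mid\gcd(d_0,R)$. For example, if $R=3$ and every value of $\delta$ on $K(\f)$ were a multiple of $3$, the characters for $n$ and $n+2$ would coincide, and nothing in your argument rules this out. Your ``distinct modulo twists'' step has the same defect, because it again uses only the odd-$\delta$ element.

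Moreover, you have not actually shown that an element of $K(\f)$ with odd $\delta$ exists. The remark about degree-one divisors is not an argument. In the $a\pm\sqrt D$ construction, the cancellation needed at infinity fixes the top coefficients of $a$. That is in tension with the CRT conditions you need for $v_\p(\alpha)=0$ at all $\p\mid\f$.

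What you need is $\delta(K(\f))=\Z$, i.e.\ some $\alpha\in K(\f)$ with $\delta(\alpha)=\pm1$. The paper gets this directly from the approximation theorem. Choose $\alpha\in K^\times$ with $v_\p(\alpha)=0$ for all $\p\mid\f$, $v_{\infty_1}(\alpha)=0$ and $v_{\infty_2}(\alpha)=-1$. Then $\alpha\in K(\f)$ and $\delta(\alpha)=-1$.

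With this in place, the rest of your proposal is correct. Your evaluation at elements of $A\cap K(\f)$ of coprime degrees, where $\delta=0$, forces $q^{2i\mu}=1$. Hence $\N((\alpha))^{i\mu}$ is either identically $1$ or identically $(-1)^{\deg\N((\alpha))}$ on $P(\f)$. This is a genuine addition: it proves that $n$ and $n'$ with $n'\not\equiv n\bmod R$ do not differ by a norm twist, which the paper's proof of ``exactly $R$ choices'' leaves implicit.
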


\begin{proof}
By \eqref{eq:size-factor}, replacing $n$ by $n'$ multiplies $\chi_\infty$ by $\exp(i\pi(n'-n)(\deg\sigma_1\alpha-\deg\sigma_2\alpha)/R)$. By the approximation theorem there is $\alpha\in K^\times$ with $v_\p(\alpha)=0$ for $\p\mid\f$, $v_{\infty_1}(\alpha)=0$ and $v_{\infty_2}(\alpha)=-1$, so $\deg\sigma_1\alpha-\deg\sigma_2\alpha$ takes the value $-1$, hence every integer value. Therefore the factor is trivial if and only if $2R\mid n'-n$. For $n'=n+R$ the factor is $(-1)^{\deg\sigma_1\alpha-\deg\sigma_2\alpha}=(-1)^{\deg\sigma_1\alpha+\deg\sigma_2\alpha}=(-1)^{\deg\N((\alpha))}$ by Lemma~\ref{lem:prodformula}. All other factors in \eqref{eq:infpart-full} are of finite order, and so is the root of unity \eqref{eq:size-factor}.

For the last statement, let $\Hk'$ be a Hecke character modulo $\f$ on $I(\f)$. By Theorem~\ref{thm:hecke-real} and what we have just shown, its restriction to $P(\f)$ equals $\N(\cdot)^{i\mu}\phi$ with $\phi$ of finite order, say $\phi^m=1$. Then $\Hk'':=\Hk'\N(\cdot)^{-i\mu}$ is a character of $I(\f)$ with $(\Hk'')^m$ trivial on $P(\f)$, so $(\Hk'')^m$ is a character of the finite group $I(\f)/P(\f)$ (Lemma~\ref{lem:raycl} below), and hence $\Hk''$ has finite order.
\end{proof}

\begin{example}[Characters of sign type]
If $\eta_1=\eta_2=1$, then $\gamma$ is given by $\chi(\epsilon)\omega_2(\Norm(\epsilon))=\exp(-2\pi i\gamma)$ and
\[
\chi_\infty(\alpha)=\Big|\frac{\sigma_1(\alpha)}{\sigma_2(\alpha)}\Big|^{i\pi(n+\gamma)/\log|\sigma_1(\epsilon)|}\omega_1\big(\sgn(\sigma_1(\alpha))\big)\,\omega_2\big(\sgn(\sigma_2(\alpha))\big),
\]
which is formally identical to the number field formula. If moreover $\f=(1)$ and $\omega_1=\omega_2=1$, then $\gamma=0$ and
\begin{equation}\label{eq:psi-n}
\chi_\infty(\alpha)=\Big|\frac{\sigma_1(\alpha)}{\sigma_2(\alpha)}\Big|^{i\pi n/\log|\sigma_1(\epsilon)|}=\exp\Big(\frac{i\pi n}{R}\big(\deg\sigma_1(\alpha)-\deg\sigma_2(\alpha)\big)\Big),
\end{equation}
a family of Hecke characters modulo $(1)$. For $n=1$ we obtain the character
\begin{equation}\label{eq:psi}
\psi((\alpha)):=\exp\Big(\frac{i\pi}{R}\big(\deg\sigma_1(\alpha)-\deg\sigma_2(\alpha)\big)\Big),
\end{equation}
which is well defined on principal ideals since multiplication by $\epsilon$ changes $\deg\sigma_1-\deg\sigma_2$ by $2R$; its powers $\psi^n$ are the characters \eqref{eq:psi-n}. It is unramified at $\infty_1$ and $\infty_2$ and has order $2R$.
\end{example}

\subsection{Extension to general fractional ideals}
Let $\f$ be an integral ideal of $\OK$, and let $I$, $P$, $I(\f)$, $P(\f)$ be as in Subsection~3.1.

\begin{lemma}\label{lem:raycl}
The natural map $I(\f)/P(\f)\to I/P$ is an isomorphism, and $I/P=\Cl(\OK)$ is finite. We write $h_K=\#\Cl(\OK)$ for the ideal class number.
\end{lemma}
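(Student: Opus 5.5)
To prove Lemma~\ref{lem:raycl} I would treat the two assertions separately: first that the natural map $I(\f)/P(\f)\to I/P$ is surjective, then that it is injective, and finally that $I/P$ is finite.

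\emph{Surjectivity.} Let $\aid\in I$ be a fractional ideal. By the approximation theorem (strong approximation in the Dedekind domain $\OK$) I choose $\beta\in K^\times$ with $v_\p(\beta)=-v_\p(\aid)$ for the finitely many prime ideals $\p\mid\f$. Then $\beta\aid$ is coprime to $\f$, so $\beta\aid\in I(\f)$, and it lies in the class of $\aid$. Hence every class of $I/P$ has a representative in $I(\f)$.

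\emph{Injectivity.} The kernel of the map is $(I(\f)\cap P)/P(\f)$. Suppose $\aid\in I(\f)$ is principal, say $\aid=(\alpha)$ with $\alpha\in K^\times$. Since $v_\p(\alpha)=v_\p(\aid)=0$ for all $\p\mid\f$, we have $\alpha\in K(\f)$, and therefore $\aid\in P(\f)$. So the kernel is trivial. This step is immediate precisely because $P(\f)$ in this paper is defined as all principal ideals generated by elements of $K(\f)$, with no congruence condition.

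\emph{Finiteness.} For the finiteness of $I/P=\Cl(\OK)$ I would use the exact sequence $0\to\Z/d\Z\to\Pic^0(K)\to\Cl(\OK)\to0$, or more simply the surjection $\Pic(K)\to\Cl(\OK)$ given by restricting a divisor to its finite places. Its kernel contains the classes of divisors supported on $S_\infty$. Since $S_\infty$ contains a place of degree one (in the real case, $\infty_1$), every divisor class is congruent, modulo such divisors, to a degree-zero class. Thus $\Cl(\OK)$ is a quotient of the finite group $\Pic^0(K)$, which is finite by the standard finiteness of the class number of a global function field \cite[Ch.~5, Ch.~14]{Ros}. Alternatively, I would simply cite \cite[Proposition~14.1]{Ros}.

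None of these steps is a real obstacle; the only point needing care is the surjectivity argument, namely choosing $\beta$ with prescribed valuations at finitely many primes. That is the weak approximation theorem (or the Chinese remainder theorem applied to the numerator and denominator of $\aid$).
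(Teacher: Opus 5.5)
Your proof is correct and follows the paper's argument. Injectivity comes from $P\cap I(\f)=P(\f)$, surjectivity from approximation to clear the valuations of $\aid$ at the primes dividing $\f$, and finiteness from realising $\Cl(\OK)$ as essentially a quotient of the finite group $\Pic^0(K)$ by forgetting the infinite part of divisors, as in Remark~\ref{rem:twoclass}.

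One harmless slip: the sequence $0\to\Z/d\Z\to\Pic^0(K)\to\Cl(\OK)\to0$ you first write is misstated. The kernel of $\Pic^0(K)\to\Cl(\OK)$ is cyclic of order $R$ in the real case, and $\Z/d\Z$ with $d=\gcd_w d_w$ is the cokernel, which is nontrivial in the inert case. The surjection argument you actually give is nevertheless correct whenever some place above $\infty$ has degree one, which covers the real case needed here.
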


\begin{proof}
Injectivity: $P(\f)=P\cap I(\f)$, since a principal ideal $(\alpha)$ coprime to $\f$ has $\alpha\in K(\f)$. Surjectivity: given $\aid\in I$, choose by the approximation theorem $\beta\in K^\times$ with $v_\p(\beta)=-v_\p(\aid)$ for all $\p\mid\f$; then $\beta\aid\in I(\f)$. Finiteness follows from the finiteness of the divisor class group $\Pic^0(K)$ and Remark~\ref{rem:twoclass} below.
\end{proof}

\begin{remark}[Two class numbers]\label{rem:twoclass}
For function fields one has to distinguish $h_K$ from the divisor class number $\tilde h_K=\#\Pic^0(K)$, which counts degree-zero divisors on the whole curve, including the infinite places. For quadratic $K$:
\[
\tilde h_K=R\,h_K\ \text{(real)},\qquad h_K=2\tilde h_K\ \text{(inert)},\qquad h_K=\tilde h_K\ \text{(ramified)}.
\]
Indeed, forgetting the infinite part of a divisor gives a map $\Pic^0(K)\to\Cl(\OK)$. Its kernel consists of the classes of degree-zero divisors supported at infinity. In the real case these are the multiples of $\infty_1-\infty_2$, and $n(\infty_1-\infty_2)=\mathrm{div}(\alpha)$ forces $\alpha=c\epsilon^j$, while $\mathrm{div}(\epsilon)=-R\infty_1+R\infty_2$; so the kernel is cyclic of order $R$. In the imaginary cases the kernel is trivial. The cokernel is $\Z/\gcd_w(d_w)\Z$, which is trivial except in the inert case, where it has order $2$. The class number appearing in the extension process below is $h_K$, not $\tilde h_K$.
\end{remark}

In general, a Hecke character modulo $\f$ is a function on $I$ which equals zero on $I\setminus I(\f)$, whose restriction to $I(\f)$ is a character of $I(\f)$, and whose restriction to $P(\f)$ is a character as described in Subsections~3.3--3.4. The extension from $P(\f)$ to $I(\f)$ works exactly as for number fields (see \cite[\S2]{BDM}), with the class number replaced by $h_K$. Write
\[
I(\f)/P(\f)\cong\Z_{m_1}\oplus\dots\oplus\Z_{m_k},
\]
and choose ideals $\aid_1,\dots,\aid_k\in I(\f)$ whose classes correspond to the standard generators. Then $\aid_l^{m_l}\in P(\f)$, every $\mathfrak{b}\in I(\f)$ can be written uniquely as
\[
\mathfrak{b}=\aid_1^{j_1}\cdots\aid_k^{j_k}(\alpha),\qquad0\le j_l\le m_l-1,\ \alpha\in K(\f)
\]
(the ideal $(\alpha)$ being unique, $\alpha$ unique up to units), and for a fixed choice of $m_l$-th roots and any character $\lambda$ of $I(\f)/P(\f)$,
\begin{equation}\label{eq:extension}
\Hk(\mathfrak{b})=\lambda\big(\aid_1^{j_1}\cdots\aid_k^{j_k}P(\f)\big)\,\Hk\big(\aid_1^{m_1}\big)^{j_1/m_1}\cdots\Hk\big(\aid_k^{m_k}\big)^{j_k/m_k}\,\Hk((\alpha))
\end{equation}
defines a Hecke character on $I(\f)$.

\begin{proposition}\label{prop:extension}
Every Hecke character on $I(\f)$ whose restriction to $P(\f)$ is a given $\Hk$ is of the form \eqref{eq:extension}. Hence every Hecke character on $P(\f)$ has exactly $h_K$ extensions to $I(\f)$, one for each class group character $\lambda$.
\end{proposition}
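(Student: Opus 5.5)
The plan is to prove both parts of Proposition~\ref{prop:extension} from the unique decomposition $\mathfrak{b}=\aid_1^{j_1}\cdots\aid_k^{j_k}(\alpha)$ and the fact that $\T$ is a divisible group.

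Let $\Hk'$ be any character of $I(\f)$ whose restriction to $P(\f)$ is $\Hk$. By multiplicativity and the decomposition,
\[
\Hk'(\mathfrak{b})=\Hk'(\aid_1)^{j_1}\cdots\Hk'(\aid_k)^{j_k}\,\Hk((\alpha)).
\]
Since $\aid_l^{m_l}\in P(\f)$, we have $\Hk'(\aid_l)^{m_l}=\Hk(\aid_l^{m_l})$. Hence $\Hk'(\aid_l)$ is some $m_l$-th root of $\Hk(\aid_l^{m_l})$. Writing $\Hk(\aid_l^{m_l})^{1/m_l}$ for the fixed root, we get $\Hk'(\aid_l)=\zeta_l\,\Hk(\aid_l^{m_l})^{1/m_l}$ with $\zeta_l$ an $m_l$-th root of unity.

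Next define $\lambda$ on $I(\f)/P(\f)\cong\Z_{m_1}\oplus\dots\oplus\Z_{m_k}$ by $\lambda(\aid_1^{j_1}\cdots\aid_k^{j_k}P(\f))=\zeta_1^{j_1}\cdots\zeta_k^{j_k}$. This is well defined because $\zeta_l^{m_l}=1$, and it is a character. Substituting shows that $\Hk'$ has exactly the form \eqref{eq:extension}, where $\Hk(\aid_l^{m_l})^{j_l/m_l}$ means $(\Hk(\aid_l^{m_l})^{1/m_l})^{j_l}$.

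For the converse, and hence the count, we must check two things. First, formula \eqref{eq:extension} really defines a homomorphism on $I(\f)$. Second, different $\lambda$ give different extensions.

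\emph{Homomorphism (main obstacle).} Set $\rho_l=\Hk(\aid_l^{m_l})^{1/m_l}$. Consider the function $\Hk_0$ given by \eqref{eq:extension} with $\lambda=1$. Multiplying two decompositions gives exponents $j_l+j_l'$, which may be $\ge m_l$. One then reduces: $j_l+j_l'=r_l+m_l\delta_l$ with $0\le r_l<m_l$. Moving $\aid_l^{m_l\delta_l}$ into the principal part replaces $\alpha\alpha'$ by $\alpha\alpha'\beta_l^{\delta_l}$, where $(\beta_l)=\aid_l^{m_l}$. This contributes the factor $\Hk((\beta_l))^{\delta_l}=\rho_l^{m_l\delta_l}$, which matches $\rho_l^{j_l+j_l'}=\rho_l^{r_l}\rho_l^{m_l\delta_l}$. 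Here we use that $\Hk$ is a character on $P(\f)$ and that the principal ideal, not the element $\alpha$, is what is unique. So $\Hk_0$ is multiplicative. The general case is then $\lambda\cdot\Hk_0$, a product of two characters, and it restricts to $\Hk$ on $P(\f)$ because there all $j_l=0$.

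\emph{Distinctness and count.} Two extensions $\lambda\Hk_0$ and $\lambda'\Hk_0$ agree if and only if $\lambda=\lambda'$, since $\Hk_0$ takes values in $\T$ and is never zero. So extensions correspond bijectively to characters of the finite abelian group $I(\f)/P(\f)$. The number of such characters equals the order of the group, which is $h_K$ by Lemma~\ref{lem:raycl}. Equivalently, any two extensions differ by a character trivial on $P(\f)$, and there are exactly $h_K$ of these.
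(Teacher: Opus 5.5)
Your proposal is correct and follows essentially the same route as the paper: you verify multiplicativity of \eqref{eq:extension} with $\lambda=1$ by the same carrying argument, show that any extension is $\lambda$ times this one, and count characters of $I(\f)/P(\f)$ via Lemma~\ref{lem:raycl}. The only difference is cosmetic: you build $\lambda$ explicitly from the roots of unity $\zeta_l=\Hk'(\aid_l)/\Hk(\aid_l^{m_l})^{1/m_l}$, whereas the paper simply observes that the quotient of two extensions is trivial on $P(\f)$.
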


\begin{proof}
One checks directly that \eqref{eq:extension} is multiplicative: multiplying two representations, the exponents $j_l$ add, and whenever $j_l+j_l'\ge m_l$ one factor $\aid_l^{m_l}$ moves into the principal part, where it contributes $\Hk(\aid_l^{m_l})$, matching $(\Hk(\aid_l^{m_l})^{1/m_l})^{m_l}$. If $\chi'$ is any extension, then $\chi'/\Hk$ (with $\Hk$ given by \eqref{eq:extension} with $\lambda=1$) is trivial on $P(\f)$, hence equals some class group character $\lambda$. Distinct $\lambda$ give distinct extensions, and there are $\#(I(\f)/P(\f))=h_K$ of them by Lemma~\ref{lem:raycl}.
\end{proof}

As before, a change of the $m_l$-th roots amounts to a change of $\lambda$, class group characters are Hecke characters modulo $(1)$, and the principal character $\chi_{\mathrm{Hecke},0}$ (equal to $1$ on $I(\f)$ and $0$ elsewhere) belongs to the principal Dirichlet character $\chi_0$ modulo $\f$.

\subsection{Values at the infinite places and $L$-functions}
A feature without direct counterpart in the ideal-theoretic language is that the infinite places are places of the same kind as the finite ones, so the Hecke character also has a value there, which enters the Euler product.

\begin{definition}\label{def:infvalue}
For $w\in S_\infty$ we say that $\Hk$ is unramified at $w$ if $\omega_w$ and $\eta_w$ in \eqref{eq:infpart} are trivial. In this case we put
\[
\Hk(w):=|\pi_w|_w^{-iv_w}=q^{id_wv_w};
\]
otherwise we put $\Hk(w):=0$. The conductor exponent $c_w$ is $0$ if $\Hk$ is unramified at $w$, $1$ if $\eta_w=1\neq\omega_w$, and $\min\{m\ge2:\eta_w|_{U_w^{(m)}}=1\}$ if $\eta_w\neq1$.
\end{definition}

This normalisation is forced: if $\Hk$ is unramified everywhere (so $\f=(1)$, $\chi=1$, all $\omega_w,\eta_w$ trivial), then it extends to all divisors of $K$ and must be trivial on principal divisors $\mathrm{div}(\alpha)=(\alpha)+\sum_wv_w(\alpha)\,w$, and indeed
\[
\Hk((\alpha))\prod_w\Hk(w)^{v_w(\alpha)}=\prod_w|\alpha_w|_w^{iv_w}q^{id_wv_wv_w(\alpha)}=\prod_wq^{-id_wv_wv_w(\alpha)}q^{id_wv_wv_w(\alpha)}=1.
\]
(Here the first $v_w$ is the parameter, the second the valuation.)

The finite and complete Hecke $L$-functions are
\[
L(s,\Hk)=\sum_{\substack{\aid\subseteq\OK\\\aid\neq0}}\frac{\Hk(\aid)}{\N(\aid)^s}=\prod_\p\big(1-\Hk(\p)\N(\p)^{-s}\big)^{-1},
\]
\[
\Lambda(s,\Hk)=L(s,\Hk)\prod_{w\in S_\infty}\big(1-\Hk(w)q^{-d_ws}\big)^{-1}.
\]

\begin{theorem}[Functional equation]\label{thm:FE}
Let $\Hk$ be primitive, i.e.\ induced by its own idelic character (Proposition~\ref{prop:idelic}) with $\f$ the finite conductor, let $\mathfrak{m}=\f+\sum_wc_ww$ be its conductor, with degree $\deg\mathfrak{m}=\log_q\N(\f)+\sum_wc_wd_w$, and let $g$ be the genus of $K$. If $\Hk$ is not of the form $\N(\cdot)^{s_0}$ (extended to all divisors), then $\Lambda(s,\Hk)$ is a polynomial in $q^{-s}$ of degree $N=2g-2+\deg\mathfrak{m}$, and
\[
\Lambda(s,\Hk)=W(\Hk)\,q^{N(\frac12-s)}\,\Lambda(1-s,\overline{\Hk}),\qquad|W(\Hk)|=1.
\]
\end{theorem}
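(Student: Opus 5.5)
The plan is to derive Theorem~\ref{thm:FE} from Tate's thesis in the function field setting, using Proposition~\ref{prop:idelic} to pass between the ideal-theoretic $\Hk$ and the primitive idelic character $\xi$. First, I identify $\Lambda(s,\Hk)$ with the complete idelic $L$-function $L(s,\xi)=\prod_v L_v(s,\xi_v)$. At finite $\p\nmid\f$ the local factor is $(1-\Hk(\p)\N(\p)^{-s})^{-1}$. At ramified finite places the factor is $1$, because primitivity makes $\xi_\p$ ramified there. At $w\in S_\infty$ the local factor is $(1-\xi_w(\pi_w)q^{-d_ws})^{-1}$ if $\xi_w$ is unramified and $1$ otherwise. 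Since $\xi_w=\tilde\chi_w^{-1}$, we get $\xi_w(\pi_w)=|\pi_w|_w^{-iv_w}=\Hk(w)$, which matches Definition~\ref{def:infvalue}. Hence $\Lambda(s,\Hk)=L(s,\xi)$.

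Next I apply the global functional equation of Tate (or Weil's version, \cite[Ch.~VII]{Wei74}, \cite[Ch.~9]{Ros}):
\[
L(s,\xi)=\varepsilon(s,\xi)\,L(1-s,\xi^{-1}),
\]
where $\varepsilon(s,\xi)=\prod_v\varepsilon_v(s,\xi_v,\psi_v)$ for a nontrivial additive character $\psi$ of $\mathbb{A}_K/K$. I fix $\psi$ attached to a differential with divisor $\mathfrak{d}$ of degree $2g-2$. Each local epsilon factor has the form $\varepsilon_v(s)=W_v\,q_v^{(c_v+d_v(\mathfrak{d}))(\frac12-s)}$ with $|W_v|=1$, where $c_v$ is the conductor exponent and $q_v$ the residue field size. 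The exponents add up to $\deg\mathfrak{d}+\deg\mathfrak{m}=2g-2+\deg\mathfrak{m}=N$, which gives the stated shape with $W(\Hk)=\prod_vW_v$. At the infinite places I must check that the conductor exponent of $\xi_w$ equals $c_w$ from Definition~\ref{def:infvalue}. This holds because the conductor is the least $m$ with $\xi_w$ trivial on $U_w^{(m)}$ (with $U_w^{(0)}$ the full unit group), and that is exactly $0$, $1$, or the level of $\eta_w$ in the three cases. Finally, $\xi^{-1}$ corresponds to $\overline{\Hk}$.

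For polynomiality in $q^{-s}$, I use that $\Lambda(s,\Hk)$ is a sum over effective divisors $D$ coprime to $\mathfrak{m}$ of $\xi(D)q^{-s\deg D}$, so it is a power series in $u=q^{-s}$. The coefficients are $\sum_{\deg D=n}\xi(D)$. Grouping by classes in the ray divisor class group modulo $\mathfrak{m}$ and using Riemann--Roch, for $n>2g-2+\deg\mathfrak{m}$ each class of degree $n$ contains the same number of effective divisors coprime to $\mathfrak{m}$. This count is $(q^{n+1-g-\deg\mathfrak{m}}-1)/(q-1)$ times a constant, after handling the unit group modulo $\mathfrak{m}$. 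So the coefficient is a character sum over the degree-$n$ ray classes. That sum vanishes unless $\xi$ is trivial on the degree-zero ray classes, which is exactly the excluded case $\xi=|\cdot|^{s_0}$. Therefore $\Lambda$ is a polynomial of degree at most $N$. Comparing leading terms in the functional equation shows the degree is exactly $N$.

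The main obstacle I expect is this counting step. One has to show carefully that the number of effective divisors in a ray class modulo $\mathfrak{m}$ with infinite components is independent of the class in large degree. With wild ramification at $w\in S_\infty$ this requires Riemann--Roch applied to $D-\mathfrak{m}$ together with the surjectivity of $H^0(D)\to\mathcal{O}/\mathfrak{m}$. I would cite \cite[Ch.~9]{Ros} or Weil's treatment for this, rather than redo it.
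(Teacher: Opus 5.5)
Your proposal is correct and follows the paper's route. The paper's proof likewise identifies $\Hk$ with an idelic Hecke character via Proposition~\ref{prop:idelic}, notes that the Euler factors at infinity are the local $L$-factors, and cites Weil/Tate; your epsilon-factor bookkeeping, conductor check at $w\in S_\infty$ and Riemann--Roch count just unpack that citation.

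One small slip: for $\mathfrak{m}\neq0$ and $n>2g-2+\deg\mathfrak{m}$, each degree-$n$ ray class contains exactly $q^{\dim H^0(D-\mathfrak{m})}=q^{\,n+1-g-\deg\mathfrak{m}}$ effective divisors coprime to $\mathfrak{m}$, not a multiple of $(q^{\,n+1-g-\deg\mathfrak{m}}-1)/(q-1)$. Also, the surjectivity $H^0(D)\to\mathcal{O}/\mathfrak{m}$ that you flag as the main obstacle is immediate from Riemann--Roch, since $D$ and $D-\mathfrak{m}$ are both non-special, whatever the ramification at infinity.
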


\begin{proof}
See \cite[Ch.~VII]{Wei74} or \cite{Tat}; by Proposition~\ref{prop:idelic} our $\Hk$ is an idelic Hecke character, and our Euler factors at infinity are the local $L$-factors there.
\end{proof}

In the real quadratic case, $g=\frac{\deg D}{2}-1$ and $N=\deg D-4+\log_q\N(\f)+c_1+c_2$. If $\Hk$ is unramified at $\infty_1$ and $\infty_2$ (i.e.\ $\omega_1=\omega_2=1$, which forces $\chi|_{\Fq^\times}=1$, and $\eta_1=\eta_2=1$), then by \eqref{eq:v}
\[
\Hk(\infty_1)=q^{iv}=\exp\Big(\frac{i\pi(n+\gamma)}{R}\Big),\qquad\Hk(\infty_2)=q^{-iv}=\exp\Big(-\frac{i\pi(n+\gamma)}{R}\Big),
\]
and the Euler factors at infinity are
\[
\big(1-e^{i\pi(n+\gamma)/R}q^{-s}\big)^{-1}\big(1-e^{-i\pi(n+\gamma)/R}q^{-s}\big)^{-1}.
\]

\subsection{Consistency with the previous definition in the imaginary quadratic case}\label{subsec:consistency}
The imaginary quadratic case was treated by Baier and the author in \cite[\S4]{BaMo}, where Hecke characters were introduced in an ad hoc way, adapted to that situation. In this subsection we show that this previous definition is consistent with the general description of Hecke characters given above: the characters used in \cite{BaMo} are exactly the Hecke characters of trivial infinity type.

Let $K=k(\sqrt D)$ be imaginary quadratic, i.e.\ $\deg D\ge1$ is odd, or $\deg D$ is even and $\sgn(D)$ is a non-square in $\Fq^\times$. Then there is exactly one place $\infty_K$ of $K$ above $\infty$, of degree $2$ in the inert case and of degree $1$ in the ramified case, and $\OK^\times=\Fq^\times$ by Lemma~\ref{lem:units} with $t=1$. Let $\mathcal{P}$ denote the set of non-zero prime ideals of $\OK$, $\f$ an integral ideal of $\OK$, and $U(\f)=\{u+\f:u\in\OK^\times\}\subseteq(\OK/\f)^\times$.

\emph{The previous definition.} In \cite[\S\S4.5--4.6]{BaMo}, Hecke characters were set up as follows.
\begin{enumerate}[label=(\roman*)]
\item Let $G(\f)$ be the group of Dirichlet characters $\chi$ modulo $\f$ which are trivial on the units. For $\chi\in G(\f)$, the function $\tilde\chi((c)):=\chi(c)$ is well defined on the principal ideals coprime to $\f$, since $\chi$ is constant on the generators of a principal ideal. Thus $G(\f)$ is identified with the group of characters of $(\OK/\f)^\times/U(\f)$.
\item Each $\tilde\chi$ extends in exactly $h_K$ ways to a character of the group of all fractional ideals coprime to $\f$, the extensions differing by class group characters. These extensions form a group $H(\f)$ of order $h_K\cdot\#G(\f)$.
\item The characters $\chi\in H(\f)$ are turned into Hecke characters on divisors (see \cite[Chapter~9]{Ros}) as follows. For $\p\in\mathcal{P}$ let $P$ be the maximal ideal of the discrete valuation ring $(\OK)_\p$; these $P$ are precisely the finite primes of $K$, and $\infty_K$ is the only other prime. Let $\mathfrak{F}=\sum_{i=1}^{\omega}e_iP_i$, where $\f=\prod_{i=1}^\omega\p_i^{e_i}$ is the prime factorisation of $\f$ and $P_i$ corresponds to $\p_i$. Put
\[
\lambda(P):=\chi(\p)\ \text{for the finite prime $P$ corresponding to }\p,\qquad\lambda(\infty_K):=1.
\]
Then $\lambda$ extends uniquely to a Hecke character modulo $\mathfrak{F}$, with $L$-function
\begin{equation}\label{eq:old-L}
L(s,\lambda)=\prod_P\big(1-\lambda(P)\N(P)^{-s}\big)^{-1}=\prod_{\p\in\mathcal{P}}\big(1-\chi(\p)\N(\p)^{-s}\big)^{-1}\big(1-\N(\infty_K)^{-s}\big)^{-1}\quad(\Re s>1),
\end{equation}
where $\N(\infty_K)=q^{\deg\infty_K}$, i.e.\ $q^2$ in the inert case and $q$ in the ramified case.
\end{enumerate}
The analytic input in \cite{BaMo} was then the following result \cite[Theorem~9.24A]{Ros}, together with the Riemann hypothesis for $L(s,\lambda)$.

\begin{theorem}\label{thm:rosen}
Let $\lambda$ be a primitive Hecke character with conductor $\mathfrak{F}$, and suppose that $\lambda$ is not trivial on $D_0(S)$, the group of divisors of degree $0$ with support disjoint from the set $S$ of primes on which $\mathfrak{F}$ is supported. Then $L(s,\lambda)$ is a polynomial in $q^{-s}$ of degree $2g-2+\deg_K\mathfrak{F}$, where $g$ is the genus of $K$. Moreover, $\Lambda(s,\lambda):=q^{(g-1)s}\N(\mathfrak{F})^{s/2}L(s,\lambda)$ satisfies $\Lambda(s,\lambda)=\varepsilon(\lambda)\Lambda(1-s,\overline\lambda)$ with $|\varepsilon(\lambda)|=1$, and all zeros of $L(s,\lambda)$ have real part $1/2$.
\end{theorem}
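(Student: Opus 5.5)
The statement is \cite[Theorem~9.24A]{Ros}, so in the paper it is simply quoted. If one wants to see that it fits the framework above, I would deduce it from Theorem~\ref{thm:FE} together with Weil's theorem, rather than reprove it from scratch. The plan has four steps: identify $\lambda$ with an idelic character, match the two normalisations, check the degree, and then quote the Riemann hypothesis.

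\emph{Identification with an idelic character.} A Hecke character on divisors in the sense of \cite[Ch.~9]{Ros} is a character of the divisors prime to $S$ that is trivial on principal divisors $\mathrm{div}(\alpha)$ with $\alpha\equiv1$ modulo $\mathfrak{F}$. By the approximation-theorem argument in the proof of Proposition~\ref{prop:idelic}, such a character is the same thing as a continuous character of $\mathbb{A}_K^\times/K^\times$. Primitivity means that the conductor of this idelic character is exactly $\mathfrak{F}$. Its local $L$-factors are $(1-\lambda(P)\N(P)^{-s})^{-1}$ at unramified $P$ (including any unramified infinite place) and $1$ at ramified places. So $L(s,\lambda)$ in Rosen's normalisation equals the complete function $\Lambda(s,\Hk)$ of Theorem~\ref{thm:FE}, and Rosen's $\deg_K\mathfrak{F}$ equals our $\deg\mathfrak{m}$.

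\emph{Matching the hypothesis and the normalisation.} The hypothesis that $\lambda$ is non-trivial on $D_0(S)$ is exactly the condition that $\lambda$ is not of the form $\N(\cdot)^{s_0}$. Indeed, a character trivial on degree-zero divisors prime to $S$ factors through the degree map, hence equals $u^{\deg}$ for some $u\in\T$, i.e.\ a norm twist; this also forces $\mathfrak{F}=0$. Theorem~\ref{thm:FE} then says that $L(s,\lambda)$ is a polynomial in $q^{-s}$ of degree $N=2g-2+\deg_K\mathfrak{F}$, and that
\[
L(s,\lambda)=W\,q^{N(\frac12-s)}L(1-s,\overline\lambda).
\]
Multiply both sides by $q^{(g-1)s}\N(\mathfrak{F})^{s/2}=q^{Ns/2}q^{-(g-1)}$. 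The right-hand side turns into $W\,q^{N(1-s)/2}q^{-(g-1)}L(1-s,\overline\lambda)=W\,\Lambda(1-s,\overline\lambda)$. This gives Rosen's symmetric form with $\varepsilon(\lambda)=W(\Hk)$, so $|\varepsilon(\lambda)|=1$.

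\emph{Riemann hypothesis.} This is the only deep step and it cannot be derived from the preceding material. I would invoke Weil \cite{Wei48}. One route, following \cite[Ch.~9]{Ros}, is as follows:
\begin{itemize}
\item Pass to a finite abelian extension $L/K$ through which $\lambda$ factors after twisting. This is possible since, by the argument of Corollary~\ref{cor:finite}, $\lambda$ has finite order up to a norm twist, and a non-trivial twist only rotates $q^{-s}$.
\item Use the factorisation $\zeta_L(s)=\prod_\chi L(s,\chi)$ into Hecke $L$-functions. The Riemann hypothesis for the curve of $L$ (Weil) then gives all zeros of $\zeta_L$ on $\Re s=\frac12$.
\item No factor $L(s,\chi)$ has a pole, because the poles of $\zeta_L$ come from $\zeta_K$. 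Hence every zero of each factor lies on $\Re s=\frac12$.
\end{itemize}

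\emph{Main obstacle.} The main obstacle is bookkeeping rather than analysis. One has to ensure that the infinite place is treated identically in both normalisations, so that $\lambda(\infty_K)=1$ in \eqref{eq:old-L} contributes a genuine Euler factor. One also has to ensure that a primitive character unramified at $\infty_K$ gives the stated degree, with $\deg_K\mathfrak{F}$ counting only finite primes.
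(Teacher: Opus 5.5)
Your route is the same as the paper's. The paper does not prove this statement; it quotes \cite[Theorem~9.24A]{Ros}. Proposition~\ref{prop:consistency}(4), together with the paragraph preceding Lemma~\ref{lem:PNT}, then identifies it with Theorem~\ref{thm:FE} for the complete $L$-function plus Weil's Riemann hypothesis, which is exactly your plan. Your three identifications are correct: Rosen's $L(s,\lambda)$ with $\Lambda(s,\Hk)$, $\deg_K\mathfrak F$ with $\deg\mathfrak m$, and ``non-trivial on $D_0(S)$'' with ``not of the form $\N(\cdot)^{s_0}$''.

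A few details need correcting.

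First, the normalising factor is exactly $q^{(g-1)s}\N(\mathfrak F)^{s/2}=q^{s(2g-2+\deg_K\mathfrak F)/2}=q^{Ns/2}$, with no factor $q^{-(g-1)}$. As written, your right-hand side $W\,q^{N(1-s)/2}q^{-(g-1)}L(1-s,\overline\lambda)$ equals $q^{-(g-1)}W\Lambda(1-s,\overline\lambda)$, not $W\Lambda(1-s,\overline\lambda)$. The two slips cancel, and with the correct factor the symmetric form with $\varepsilon(\lambda)=W(\Hk)$ is immediate.

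Second, in the Riemann hypothesis sketch, the fact that the factors $L(s,\chi)$ have no poles is Theorem~\ref{thm:FE} itself. It does not follow from where the poles of $\zeta_L$ lie. What you actually need is that no pole of one factor cancels a zero of another. The clean fix is to choose the twist so that $\lambda$ becomes trivial on a fixed divisor of degree one prime to $S$ (such divisors exist). Then:
\begin{itemize}
\item the class field $L$ has constant field $\Fq$;
\item every non-trivial character of $\mathrm{Gal}(L/K)$ is non-trivial on $D_0(S)$;
\item $\zeta_L/\zeta_K=\prod_{\chi\neq1}L(s,\chi)$ is the quotient of the numerator polynomials of $\zeta_L$ and $\zeta_K$.
\end{itemize}
Hence every zero of each factor is a zero of the numerator of $\zeta_L$, and Weil applies. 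Without this choice, $L$ may contain a constant field extension. Then $\zeta_L$ contains twisted copies of $\zeta_K$, so the statement ``the poles of $\zeta_L$ come from $\zeta_K$'' is false as stated, and those poles must be divided out as well.

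Third, ``finite order up to a twist'' should not be attributed to Corollary~\ref{cor:finite}, which is specific to real quadratic fields. In general it follows from the finiteness of $D_0(S)$ modulo the principal divisors $\mathrm{div}(\alpha)$ with $\alpha\equiv1\bmod\mathfrak F$.

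Finally, the statement tacitly requires $\lambda$ to be unitary, and you use this when taking $u\in\T$ and when saying the twist only rotates $q^{-s}$. For non-unitary $\lambda$ the zeros move off $\Re s=\frac12$.
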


\emph{The general description in the imaginary quadratic case.} Since $t=1$, the size factor $|\alpha|_{\infty_K}^{iv}=\N((\alpha))^{iv}$ is a norm twist by Lemma~\ref{lem:prodformula}, so after the normalisation of Subsection~3.3 the infinity part \eqref{eq:infpart} takes the form
\begin{equation}\label{eq:infpart-imag}
\chi_\infty(\alpha)=\omega\big(\sgn_{\infty_K}(\alpha)\big)\,\eta\big(\pu{\alpha}_{\infty_K}\big),
\end{equation}
where $\omega$ is a character of $\mathbb{F}_{q^{\deg\infty_K}}^\times$ and $\eta$ a character of $U^{(1)}_{\infty_K}$ of finite level. This is the analogue of $\chi_\infty(\alpha)=(\alpha/|\alpha|)^u$ for imaginary quadratic number fields. Since the unit group is $\Fq^\times$, the unit condition \eqref{eq:constcond} reduces to
\begin{equation}\label{eq:imag-cond}
\omega(c)=\overline{\chi(c)}\qquad(c\in\Fq^\times),
\end{equation}
and there is no condition corresponding to a fundamental unit. We say that $\Hk$ has trivial infinity type if $\omega=1$ and $\eta=1$, i.e.\ $\chi_\infty\equiv1$ and $\Hk((\alpha))=\chi(\alpha)$ on $P(\f)$.

\begin{proposition}[Consistency]\label{prop:consistency}
Let $K$ be imaginary quadratic. Then the following hold.
\begin{enumerate}[label=(\arabic*)]
\item A Hecke character on $P(\f)$ belonging to $\chi$ has trivial infinity type if and only if $\chi$ is trivial on $\OK^\times$. Hence the Hecke characters of trivial infinity type on $P(\f)$ are exactly the characters $\tilde\chi$, $\chi\in G(\f)$, of (i).
\item The extensions of these characters to $I(\f)$ given by Proposition~\ref{prop:extension} are exactly the elements of $H(\f)$ in (ii). Thus $H(\f)$ is the set of Hecke characters modulo $\f$ of trivial infinity type, and $\#H(\f)=h_K\cdot\#G(\f)$.
\item Every $\chi\in H(\f)$ is unramified at $\infty_K$, and its value there in the sense of Definition~\ref{def:infvalue} is $\Hk(\infty_K)=1$. Hence the choice $\lambda(\infty_K)=1$ in (iii) is not a convention but forced, and the divisor character $\lambda$ of (iii) is the Hecke character $\Hk$ extended to divisors.
\item Consequently, $L(s,\lambda)$ in \eqref{eq:old-L} is the complete $L$-function $\Lambda(s,\Hk)$ of Subsection~3.6, and Theorem~\ref{thm:rosen} is the special case of Theorem~\ref{thm:FE} for Hecke characters of trivial infinity type of an imaginary quadratic function field.
\end{enumerate}
\end{proposition}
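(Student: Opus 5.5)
We take the four parts of Proposition~\ref{prop:consistency} in order, since each builds on the previous one.

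\emph{Part (1).} Let $\Hk$ on $P(\f)$ belong to $\chi$, with infinity part \eqref{eq:infpart-imag}. Since $\OK^\times=\Fq^\times$, the unit condition is exactly \eqref{eq:imag-cond}, namely $\omega(c)=\overline{\chi(c)}$.
\begin{itemize}
\item If the infinity type is trivial, then $\omega=1$, so $\chi(c)=1$ for all $c\in\Fq^\times$, i.e.\ $\chi$ is trivial on the units.
\item Conversely, if $\chi$ is trivial on $\OK^\times$, then $\omega=1$ and $\eta=1$ satisfy \eqref{eq:imag-cond}. So $\chi_\infty\equiv1$ is admissible, and $\Hk((\alpha))=\chi(\alpha)$ is well defined on $P(\f)$.
\end{itemize}
One point needs care. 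For such $\chi$, other Hecke characters with nontrivial $\eta$ also belong to $\chi$. Statement (1) should therefore be read as: the trivial-type character belonging to $\chi$ exists if and only if $\chi\in G(\f)$. In that case it equals $\tilde\chi$ from (i), since both are $(\alpha)\mapsto\chi(\alpha)$. This also identifies the set of trivial-type characters on $P(\f)$ with $G(\f)$.

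\emph{Part (2).} By Proposition~\ref{prop:extension}, each $\tilde\chi$ has exactly $h_K$ extensions to $I(\f)$, differing by class group characters $\lambda$. Item (ii) describes the same set: all characters of $I(\f)$ restricting to $\tilde\chi$. Hence the two sets coincide, and $\#H(\f)=h_K\#G(\f)$ because distinct $\chi$ give distinct restrictions. Since extension does not change the infinity part, $H(\f)$ is exactly the set of trivial-infinity-type Hecke characters modulo $\f$.

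\emph{Part (3).} Trivial infinity type means $\omega=\eta=1$, so $\Hk$ is unramified at $\infty_K$ by Definition~\ref{def:infvalue}. The value is $\Hk(\infty_K)=q^{id_wv}$, where $v$ is the size parameter. After the normalisation $\sum_w v_w=0$ with $t=1$ we have $v=0$, so $\Hk(\infty_K)=1$.

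To see that this is forced rather than a convention, extend $\Hk$ to divisors prime to $\f$ and note it must be trivial on $\mathrm{div}(\alpha)$ for $\alpha\equiv^*1\bmod\f$. Then $\Hk((\alpha))=\chi(\alpha)=1$, so $\Hk(\infty_K)^{v_{\infty_K}(\alpha)}=1$. Choosing such $\alpha$ with $v_{\infty_K}(\alpha)=\pm1$ gives $\Hk(\infty_K)=1$. In the inert case $d_w=2$, and there may be no $\alpha$ with valuation $1$. Instead we use $\gcd$ of the attainable valuations, together with the requirement that the extension is a character trivial on $K^\times$ via Proposition~\ref{prop:idelic}. This identifies $\lambda$ with $\Hk$ on divisors.

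\emph{Part (4).} Comparing Euler products, the finite parts agree, and the factor at $\infty_K$ is $(1-\N(\infty_K)^{-s})^{-1}=(1-\Hk(\infty_K)q^{-d_ws})^{-1}$. Hence $L(s,\lambda)=\Lambda(s,\Hk)$. The conductor $\mathfrak F$ equals the $\mathfrak m$ of Theorem~\ref{thm:FE}, since $c_{\infty_K}=0$. The degree and functional equation then match those of Theorem~\ref{thm:FE} up to the normalising factor $q^{(g-1)s}\N(\mathfrak F)^{s/2}$.

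The main obstacle is parts (3)--(4): pinning down $\Hk(\infty_K)$ via the idelic correspondence, and checking that the hypothesis "$\lambda$ not trivial on $D_0(S)$" matches "not a norm twist".
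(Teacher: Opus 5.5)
Parts (1), (2) and (4) follow the paper's proof. Your reading of (1) as an existence statement is also the paper's reading: its proof says that $\omega=1$ is \emph{possible} if and only if $\chi$ is trivial on $\Fq^\times$, and then $\eta=1$ may be chosen. Your caveat is correct: for such $\chi$ there are also characters with $\eta\neq1$ belonging to $\chi$, and in the inert case characters with $\omega$ nontrivial on $\mathbb{F}_{q^2}^\times$ but trivial on $\Fq^\times$.

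The flaw is in (3), in the argument that $\Hk(\infty_K)=1$ is forced. You claim that in the inert case there may be no $\alpha\eqs1\bmod\f$ with $v_{\infty_K}(\alpha)=1$, because $d_{\infty_K}=2$. This is false. $v_{\infty_K}$ is the normalised valuation, and it takes every integer value on $K^\times$ whatever the degree of the place. The degree only enters the absolute value $|x|_{\infty_K}=q^{-d_{\infty_K}v_{\infty_K}(x)}$. For example, $\infty$ is unramified in $K$, so $v_{\infty_K}(1/T)=1$.

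The approximation theorem, applied to the finitely many places $\p\mid\f$ together with $\infty_K$, gives $\alpha\in K^\times$ with $\alpha\eqs1\bmod\f$ and $v_{\infty_K}(\alpha)=1$. This works in the inert and the ramified case alike, and it is exactly the step in the paper's proof. Your proposed workaround, taking the gcd of attainable valuations together with triviality on $K^\times$ via Proposition~\ref{prop:idelic}, is not an argument as written. Worse, if that gcd really were $2$, the relation $\Hk(\infty_K)^{v_{\infty_K}(\alpha)}=1$ would give only $\Hk(\infty_K)=\pm1$, and the forcing claim would fail. Replace that paragraph by the approximation-theorem choice of $\alpha$ and (3) is complete. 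Your first argument, via Definition~\ref{def:infvalue} with $v=0$, is fine as it stands.

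On the obstacle you flag in (4), this is a one-line check. A character of the divisors prime to $S$ that is trivial on $D_0(S)$ factors through the degree map, so it is of the form $\N(\cdot)^{s_0}$; the converse also holds. Hence Rosen's hypothesis in Theorem~\ref{thm:rosen} is exactly ``not a norm twist''. The paper's proof of (4) only compares Euler factors and does not spell this out either.
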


\begin{proof}
(1) By \eqref{eq:imag-cond}, $\omega=1$ is possible if and only if $\chi$ is trivial on $\Fq^\times=\OK^\times$, and then $\eta=1$ may be chosen. In this case $\Hk((\alpha))=\chi(\alpha)=\tilde\chi((\alpha))$.

(2) Class group characters are Hecke characters modulo $(1)$ of trivial infinity type, so multiplying by them preserves trivial infinity type. The claim then follows from Proposition~\ref{prop:extension} and Lemma~\ref{lem:raycl}.

(3) Since $\omega=\eta=1$ and $v=0$, $\Hk$ is unramified at $\infty_K$ and $\Hk(\infty_K)=q^{i\deg(\infty_K)\cdot0}=1$ by Definition~\ref{def:infvalue}. Independently of this definition, the value is forced: if $\alpha\eqs1\bmod\f$, then $\Hk((\alpha))=\chi(\alpha)=1$, so triviality of the Hecke character on the principal divisor $\mathrm{div}(\alpha)=(\alpha)+v_{\infty_K}(\alpha)\infty_K$ gives $\lambda(\infty_K)^{v_{\infty_K}(\alpha)}=1$. By the approximation theorem there is such an $\alpha$ with $v_{\infty_K}(\alpha)=1$, whence $\lambda(\infty_K)=1$.

(4) By Proposition~\ref{prop:idelic}, $\Hk$ is a genuine (idelic) Hecke character. By (3), the Euler factor of $\Lambda(s,\Hk)$ at $\infty_K$ is $(1-q^{-\deg(\infty_K)s})^{-1}=(1-\N(\infty_K)^{-s})^{-1}$, which is the last factor in \eqref{eq:old-L}.
\end{proof}

\subsection{A prime number theorem for Hecke characters}
We say that a Hecke character $\Hk$ modulo $\f$ is a norm twist if there is $s_0\in\mathbb{C}$ such that $\Hk(\aid)=\N(\aid)^{s_0}$ for all $\aid\in I(\f)$. If $\Hk$ is not a norm twist, then the primitive idelic character inducing it (Proposition~\ref{prop:idelic}) is non-trivial on the ideles of degree zero, so Theorem~\ref{thm:FE} applies to it. Moreover, by Weil's proof of the Riemann hypothesis for curves over finite fields \cite{Wei48} (see also \cite[Ch.~9]{Ros}), all inverse zeros of the polynomial $\Lambda(s,\Hk)$ in $q^{-s}$ have absolute value $q^{1/2}$ if $\Hk$ is unitary and primitive. The following lemma is a standard consequence; we include the proof in order to make the dependence on the modulus explicit.

\begin{lemma}\label{lem:PNT}
Let $K$ be a global function field with constant field $\Fq$ and genus $g$, and let $t=\#S_\infty$. Let $\Hk$ be a unitary Hecke character modulo $\f$ whose conductor has degree at most $\log_q\N(\f)+c_\infty$, where $c_\infty=\sum_{w\in S_\infty}c_wd_w$. Then for every $N\ge1$ the following hold.
\begin{enumerate}[label=(\roman*)]
\item If $\Hk$ is not a norm twist, then
\[
\Big|\sum_{\substack{\p\nmid\f\\\N(\p)=q^N}}\Hk(\p)\Big|\le\big(10g+3+\log_q\N(\f)+c_\infty\big)\frac{q^{N/2}}{N}+t+\omega(\f).
\]
\item We have
\[
\Big|\#\{\p\nmid\f:\N(\p)=q^N\}-\frac{q^N}{N}\Big|\le(10g+4)\frac{q^{N/2}}{N}+t+\omega(\f).
\]
\end{enumerate}
\end{lemma}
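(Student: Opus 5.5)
We pass from the primitive character to the $L$-function, take the logarithmic derivative, and compare coefficients of $u^N$, where $u=q^{-s}$. Let $\Hk^*$ be the primitive character inducing $\Hk$, with conductor $\mathfrak m$. By hypothesis, $M:=\deg\mathfrak m\le\log_q\N(\f)+c_\infty$.

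By Theorem~\ref{thm:FE} and Weil's theorem, $\Lambda(s,\Hk^*)=\prod_{j=1}^{2g-2+M}(1-\alpha_ju)$ with $|\alpha_j|=q^{1/2}$. Applying $u\frac{d}{du}\log$ and reading off the coefficient of $u^N$ gives
\[
\sum_{d\mid N}d\!\!\sum_{\deg v=d}\Hk^*(v)^{N/d}=-\sum_j\alpha_j^N .
\]
Here $v$ runs over all places of $K$ at which $\Hk^*$ is unramified, including those in $S_\infty$, and $\deg v$ denotes $\log_q\N(v)$. The right-hand side has absolute value at most $(2g-2+M)q^{N/2}$. On the left, the term $d=N$ contributes $N\sum_{\N(\p)=q^N}\Hk^*(\p)$ together with the infinite places $w$ of degree $N$. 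We then pass from $\Hk^*$ to $\Hk$: the two agree on primes $\p\nmid\f$, and the primes dividing $\f$ at which $\Hk^*$ is unramified lie in the degree-$N$ term. Dividing by $N$, these contribute at most $\omega(\f)+t$ in absolute value.

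The terms $d\mid N$ with $d<N$ remain. Their total is bounded by the number of places of degree $\le N/2$ weighted by degree. By the Weil bound for the zeta function (the trivial character) this is
\[
\sum_{d\le N/2}\big(q^d+(2g+1)q^{d/2}+\dots\big)\le 2q^{N/2}+(\text{const})\,g\,q^{N/4}\cdot N .
\]
This is where the constant $10g+3$ comes from. It requires care: the bound must be $\le(8g+5)q^{N/2}$, say, uniformly in $N\ge1$, so that after dividing by $N$ it yields the $q^{N/2}/N$ shape. I would use $\#\{v:\deg v=d\}\le (q^d+2gq^{d/2}+1)/d$ and sum the geometric series, using $\sum_{d\le N/2}q^d\le 2q^{N/2}$ (since $q\ge3$) and $\sum_{d\le N/2}q^{d/2}\le 3q^{N/4}\le 3q^{N/2}$. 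This gives roughly $(2+2g\cdot3+3)q^{N/2}$. Combined with $(2g-2+M)$, this is well within $10g+3+M$.

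For (ii) the same computation is run with the zeta function of $K$. The numerator is then a polynomial of degree $2g$ with inverse roots of modulus $q^{1/2}$, and the denominator $(1-u)(1-qu)$ produces the main term $q^N+1$. We get
\[
\Big|\sum_{d\mid N}d\,\#\{v:\deg v=d\}-q^N-1\Big|\le 2gq^{N/2}.
\]
The term $d=N$ counts finite primes of norm $q^N$ plus infinite places of degree $N$. Primes dividing $\f$ are removed at a cost of $\omega(\f)$ and infinite places at a cost of $t$. The lower-degree terms are bounded as above, and the stray $1$ is absorbed, giving $(10g+4)q^{N/2}/N$.

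The main obstacle is purely bookkeeping. The places with $d<N$ must be bounded explicitly in a way uniform in $N$, including small $N$ where $q^{N/4}$ is not small relative to $q^{N/2}$. I expect the generous constants $10g+3$ and $10g+4$ to be chosen exactly to absorb this crude estimate.
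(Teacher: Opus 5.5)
Your proposal is essentially the paper's proof: the logarithmic derivative of the complete $L$-function of the primitive character together with the Riemann hypothesis, the places of degree $d<N$ bounded through the zeta function of $K$, and then removal of the at most $t$ infinite places and $\omega(\f)$ primes dividing $\f$.

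There is one bookkeeping slip, in (ii) only. Your bound $(6g+5)q^{N/2}$ for the places of degree $d<N$ gives
\[
1+\big(2g+6g+5\big)q^{N/2}\le(8g+6)q^{N/2},
\]
and this exceeds the claimed $(10g+4)q^{N/2}$ when $g=0$. Part (i) is unaffected, since there your constants give $8g+3+M\le10g+3+M$.

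The fix is to bound the $g$-free part more tightly, as the paper's $(3+8g)q^{N/2}$ does. For instance, use $\sum_{d\le N/2}q^d\le\frac32q^{N/2}$ and $\lfloor N/2\rfloor\le q^{N/2}$. This gives $1+(2g+3+8g)q^{N/2}\le(10g+4)q^{N/2}$, which is exactly the constant in (ii).
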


\begin{proof}
Let $B_d$ be the number of places of $K$ of degree $d$. By the Riemann hypothesis for $K$, we have $\sum_{e\mid d}eB_e=q^d+1-\sum_{j=1}^{2g}\omega_j^d$ with $|\omega_j|=q^{1/2}$, hence $dB_d\le q^d+1+2gq^{d/2}$ and
\begin{equation}\label{eq:smallplaces}
\sum_{\substack{d\mid N\\d<N}}dB_d\le\sum_{d\le N/2}\big(q^d+1+2gq^{d/2}\big)\le(3+8g)q^{N/2},
\end{equation}
using $q\ge3$. Taking $d=N$ in the first identity gives $|NB_N-q^N|\le1+2gq^{N/2}+(3+8g)q^{N/2}$, and (ii) follows after removing the at most $t$ infinite places and the at most $\omega(\f)$ prime ideals dividing $\f$.

For (i), let $\xi$ be the primitive idelic character inducing $\Hk$ and $\mathfrak{m}$ its conductor, so that $\deg\mathfrak{m}\le\log_q\N(\f)+c_\infty$ and $\xi(\p)=\Hk(\p)$ for $\p\nmid\f$. By Theorem~\ref{thm:FE} and the Riemann hypothesis, with $u=q^{-s}$,
\[
\prod_{v\notin\supp\mathfrak{m}}\big(1-\xi(v)u^{\deg v}\big)^{-1}=\prod_{i=1}^{2g-2+\deg\mathfrak{m}}(1-\alpha_iu),\qquad|\alpha_i|=q^{1/2},
\]
the product on the left being over all places of $K$ (finite and infinite) outside the support of $\mathfrak{m}$. Comparing the coefficients of $u^N$ in the logarithmic derivatives, we get
\[
\sum_{\substack{v\notin\supp\mathfrak{m}\\\deg v\mid N}}\deg v\cdot\xi(v)^{N/\deg v}=-\sum_i\alpha_i^N.
\]
The terms with $\deg v<N$ contribute at most $(3+8g)q^{N/2}$ in absolute value by \eqref{eq:smallplaces}. Hence the sum of $\xi(v)$ over the places $v\notin\supp\mathfrak{m}$ of degree $N$ is at most $(2g-2+\deg\mathfrak{m}+3+8g)q^{N/2}/N$ in absolute value. Removing the at most $t$ infinite places and the at most $\omega(\f)$ prime ideals dividing $\f$ but not $\mathfrak{m}$ gives (i).
\end{proof}

For real quadratic fields we shall need the following simple criterion.

\begin{lemma}\label{lem:nottwist}
Let $K$ be real quadratic, $\chi$ a non-principal Dirichlet character modulo $\f$, and $\Hk$ a Hecke character on $P(\f)$ belonging to $\chi$ whose infinity part \eqref{eq:infpart-real} has $\eta_1=\eta_2=1$. Then $\Hk$ is not a norm twist.
\end{lemma}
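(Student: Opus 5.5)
Argue by contradiction: assume $\Hk(\aid)=\N(\aid)^{s_0}$ for all $\aid\in I(\f)$ and some $s_0\in\mathbb{C}$. It suffices to look at principal ideals. Since $\Hk$ is unitary on $P(\f)$, we may write $\N(\cdot)^{s_0}$ as $\N(\cdot)^{i\mu}$ with $\mu$ real on these values, because norms are powers of $q$ and so only $q^{s_0}$ on the unit circle matters. Then for every $\alpha\in K(\f)$,
\[
\chi(\alpha)\,\chi_\infty(\alpha)=\N((\alpha))^{i\mu}.
\]
The plan is to evaluate this identity on elements $\alpha$ whose infinity part is trivial. This forces $\chi(\alpha)=1$ on enough elements to make $\chi$ principal, which is a contradiction.

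Concretely, take $\alpha\in\OK$ with $\alpha\eqs\beta\bmod\f$ for a prescribed $\beta\in(\OK/\f)^\times$. Using the approximation theorem (as in the proof of Corollary~\ref{cor:finite}), we also impose conditions at $\infty_1$ and $\infty_2$, since these are places distinct from those dividing $\f$. We require $\sgn(\sigma_i(\alpha))=1$ and $\deg\sigma_1(\alpha)=\deg\sigma_2(\alpha)=d$ for a chosen $d$. Since $\eta_1=\eta_2=1$, the principal-unit factors in \eqref{eq:infpart-real} disappear. The sign factors are $\omega_i(1)=1$, and the size factor $|\sigma_1(\alpha)/\sigma_2(\alpha)|^{iv}$ equals $1$. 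Hence $\chi_\infty(\alpha)=1$, and the identity becomes $\chi(\beta)=\N((\alpha))^{i\mu}=q^{2di\mu}$.

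To strip off the remaining norm factor, we use a second element $\alpha'$ with the same conditions at infinity (same $d$, signs $1$) and $\alpha'\eqs1\bmod\f$. This gives $q^{2di\mu}=\chi(1)=1$. Therefore $\chi(\beta)=1$ for every residue class $\beta$, so $\chi$ is principal, a contradiction.

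The main obstacle is the existence step. We need elements of $\OK$ (not merely of $K$) lying in a prescribed class mod $\f$, with exact sign $1$ and exact degree $d$ at both infinite places, and the same $d$ for both the $\beta$-element and the $1$-element. A cleaner route avoids strong approximation with integrality. Use Lemma~\ref{lem:Vy}: take $\kappa\in\OK$ with $\kappa\eqs\beta\bmod\f$, and consider $\alpha=\kappa+f\,T^{m}$ with $f\in\f\cap A$ a nonzero polynomial and $m$ large. Then $\sigma_1(\alpha)=\sigma_1(\kappa)+fT^m$ and $\sigma_2(\alpha)=\sigma_2(\kappa)+fT^m$ share the same leading term $\sgn(f)T^{m+\deg f}$ by the ultrametric property. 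Rescale by the constant $\sgn(f)^{-1}$, which is harmless since constants can be absorbed into the choice of $\beta$ because $\chi(c)\chi_\infty(c)=1$. Alternatively, just choose $f$ monic. Both embeddings then have sign $1$ and equal degree $d=m+\deg f$, which can be made identical for $\beta$ and for $1$. This makes the argument elementary.
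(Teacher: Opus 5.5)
Your proof is correct and is essentially the paper's argument: the paper also compares $1+fT^m$ with $\beta+fT^m$ for a large $m$. These have the same sign and degree at both infinite places, so their infinity parts and norms coincide while $\chi$ takes the values $1$ and $\chi(\beta)\neq1$. Your choice of a monic $f\in\f\cap A$, which makes $\chi_\infty$ exactly trivial, is an inessential refinement: the paper takes any nonzero $f\in\f$ and only needs the two infinity parts to agree.
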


\begin{proof}
Let $\beta\in\OK$ be coprime to $\f$ with $\chi(\beta)\neq1$, and let $f\in\OK$ be a generator of $\f$ if $\f$ is principal, or otherwise any non-zero element of $\f$. Choose $m\in\mathbb{N}$ so large that $|\sigma_i(fT^m)|>\max(1,|\sigma_i(\beta)|)$ for $i=1,2$, and put $\alpha_1=1+fT^m$ and $\alpha_2=\beta+fT^m$. Then $\alpha_1,\alpha_2\in K(\f)$, $\chi(\alpha_1)=1$ and $\chi(\alpha_2)=\chi(\beta)$. Since the absolute value is ultrametric, $\sigma_i(\alpha_1)$ and $\sigma_i(\alpha_2)$ both have the same degree and the same leading coefficient as $\sigma_i(fT^m)$. As $\eta_1=\eta_2=1$, the infinity part only depends on $\sgn\sigma_i(\cdot)$ and $\deg\sigma_i(\cdot)$, so $\chi_\infty(\alpha_1)=\chi_\infty(\alpha_2)$, and also $\N(\alpha_1)=\N(\alpha_2)$ by \eqref{eq:norm-sigma}. Hence
\[
\frac{\Hk((\alpha_2))}{\Hk((\alpha_1))}=\chi(\beta)\neq1=\frac{\N(\alpha_2)^{s_0}}{\N(\alpha_1)^{s_0}}
\]
for every $s_0\in\mathbb{C}$, so $\Hk$ is not a norm twist.
\end{proof}

\section{Dirichlet approximation}\label{sec:dirichlet}

A general version of Dirichlet's approximation theorem for function fields was established by Ganguly in the appendix to \cite{BaMo}, based on Thunder's adelic version of Minkowski's theorem \cite{Thu}; for real quadratic function fields of class number one, see also \cite[Theorem~2]{BaMo2}. These results provide, for each box size, an approximation $a/c$, but do not by themselves show that infinitely many distinct quotients $a/c$ arise. We therefore give a self-contained proof of the version we need. No assumption on the class number is required, since we never need the numerator and denominator to be coprime (cf.\ \eqref{eq:Dfa} below).

For $x=(x_1,x_2)\in\kinf^2$ and $\kappa\in K$ we write $\sigma(\kappa)x:=(\sigma_1(\kappa)x_1,\sigma_2(\kappa)x_2)$. Every $w\in\kinf$ can be written uniquely as $w=[w]+\{w\}$ with $[w]\in A$ and $\{w\}\in T^{-1}\Fq[[T^{-1}]]$, and both maps are $\Fq$-linear. For $m\ge0$ let $\mathrm{pr}_m(w)\in\Fq^m$ be the vector of coefficients of $T^{-1},\dots,T^{-m}$ in $\{w\}$. Thus $\mathrm{pr}_m(w)=0$ if and only if $|\{w\}|\le q^{-m-1}$.

\begin{lemma}[Box principle]\label{lem:box}
Let $K$ be real quadratic, $y\ge g+1$ and $x\in\kinf^2$. Then there are $c\in\OK\setminus\{0\}$ and $a\in\OK$ such that
\[
|\sigma_i(c)|\le q^y\quad\text{and}\quad|\sigma_i(c)x_i-\sigma_i(a)|\le q^{g-y}\qquad(i=1,2).
\]
In particular,
\begin{equation}\label{eq:box}
\Big|x_i-\frac{\sigma_i(a)}{\sigma_i(c)}\Big|\le\frac{q^g}{\N(c)}\qquad(i=1,2).
\end{equation}
\end{lemma}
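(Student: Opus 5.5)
The plan is a pigeonhole argument on the finite $\Fq$-vector space $V_y$ of Lemma~\ref{lem:Vy}, mapped linearly into a space of fractional parts. Consider the $\Fq$-linear map
\[
\Phi:V_y\longrightarrow\Fq^{m}\times\Fq^{m},\qquad \kappa\longmapsto\big(\mathrm{pr}_m(\sigma_1(\kappa)x_1),\ \mathrm{pr}_m(\sigma_2(\kappa)x_2)\big),
\]
with $m=y-g$. This map is linear because $\sigma_i$, multiplication by $x_i$, $\{\cdot\}$ and $\mathrm{pr}_m$ are all $\Fq$-linear. By Lemma~\ref{lem:Vy}, $\dim V_y=2y+1-g$, while the target has dimension $2m=2y-2g$. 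Since $2y+1-g>2y-2g$ (as $g\ge0$), $\Phi$ has a non-zero kernel element $c\in V_y$. In contrast to the classical pigeonhole argument, we do not need to take differences, because the map is linear. The inequality $|\sigma_i(c)|\le q^y$ holds by the definition of $V_y$.

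The next step is to produce the numerator $a\in\OK$ with $\sigma_i(a)$ close to $[\sigma_i(c)x_i]$ simultaneously for $i=1,2$. Since $\Phi(c)=0$, we have $|\{\sigma_i(c)x_i\}|\le q^{-m-1}=q^{g-y-1}$. We need a single $a=a_0+b_0\sqrt D$ with $a_0,b_0\in A$ such that $\sigma_1(a)=[\sigma_1(c)x_1]$ and $\sigma_2(a)=[\sigma_2(c)x_2]$, up to a small error. Put $P_i=[\sigma_i(c)x_i]\in A$. Solving gives $a_0=(P_1+P_2)/2\in A$ and $b_0=(P_1-P_2)/(2\sqrt D)$. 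The latter is not in $A$, so instead take $b_0$ to be the polynomial part of $(P_1-P_2)/(2\sqrt D)\in\kinf$. The resulting error is $\sigma_i(a)-P_i=\pm\sqrt D\{(P_1-P_2)/(2\sqrt D)\}$, which has absolute value at most $q^{g+1}q^{-1}=q^g$.

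This is the main obstacle. The error $q^g$ is too large compared with $q^{g-y}$, so the construction has to be rebalanced. The fix I would try first is to let the pigeonhole also control the digits of $\sigma_1(\kappa)x_1-\sigma_2(\kappa)x_2$ divided by $\sqrt D$. Equivalently, write $\sigma_i(\kappa)x_i$ in terms of the two coordinates $u=(\sigma_1(\kappa)x_1+\sigma_2(\kappa)x_2)/2$ and $w=(\sigma_1(\kappa)x_1-\sigma_2(\kappa)x_2)/(2\sqrt D)$. Then require $\mathrm{pr}_{y-g}(u)=0$ and $\mathrm{pr}_{y-2g-1}(w)=0$ (suitably indexed, i.e.\ $|\{u\}|\le q^{g-y-1}$ and $|\{w\}|\le q^{-y-1}$). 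Take $a=[u]+[w]\sqrt D$. Then
\[
\sigma_i(c)x_i-\sigma_i(a)=\{u\}\pm\sqrt D\{w\},
\]
which has absolute value at most $\max(q^{g-y-1},q^{g+1-y-1})=q^{g-y}$. The number of linear conditions is $(y-g)+(y-g-1)=2y-2g-1$, which is still less than $2y+1-g$. So a non-zero $c$ exists, with room to spare. If the counts are off by one, I would adjust the precision parameters, since there is slack of $g+2$ dimensions.

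Finally, \eqref{eq:box} follows by dividing by $\sigma_i(c)\neq0$: $|x_i-\sigma_i(a)/\sigma_i(c)|\le q^{g-y}/|\sigma_i(c)|$. Then use \eqref{eq:norm-sigma} together with $|\sigma_{3-i}(c)|\le q^y$ to get $q^{-y}\le|\sigma_{3-i}(c)|^{-1}$, whence $q^{g-y}/|\sigma_i(c)|\le q^g/(|\sigma_1(c)||\sigma_2(c)|)=q^g/\N(c)$.
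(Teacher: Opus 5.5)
Your final construction is the paper's proof. The paper uses the same coordinates $u(z)=(z_1+z_2)/2$ and $v(z)=(z_1-z_2)/(2\sqrt D)$ (your $w$). It takes $c\neq0$ in the kernel of an $\Fq$-linear map on $V_y$ that kills fractional digits of $u(\sigma(\kappa)x)$ and $v(\sigma(\kappa)x)$, and it sets $a=[u]+[v]\sqrt D$. So the approach is right, and your derivation of \eqref{eq:box} at the end matches the paper's.

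Your dimension count, however, is wrong. The requirement $|\{w\}|\le q^{-y-1}$, which you correctly identify, means $\mathrm{pr}_y(w)=0$, i.e.\ $y$ linear conditions. It is not $\mathrm{pr}_{y-2g-1}(w)=0$, and it is not $y-g-1$ conditions: the factor $|\sqrt D|=q^{g+1}$ has to be absorbed. With your choice $|\{u\}|\le q^{g-y-1}$, i.e.\ $\mathrm{pr}_{y-g}(u)=0$, the total is $(y-g)+y=2y-g$ conditions. This is still less than $\dim V_y=2y+1-g$, so a nonzero $c$ exists and the proof goes through.

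The slack is exactly one dimension, not $g+2$, so there is no room to tighten the precisions further. The paper asks only $\mathrm{pr}_{y-g-1}(u)=0$, because $|\{u\}|\le q^{g-y}$ already suffices. That gives $2y-g-1$ conditions and slack two. This tightness is also why the lemma delivers $q^{g-y}$ rather than $q^{-y}$.
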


\begin{proof}
For $z=(z_1,z_2)\in\kinf^2$ put $u(z)=(z_1+z_2)/2$ and $v(z)=(z_1-z_2)/(2\sqrt D)$. This is a $\kinf$-linear bijection of $\kinf^2$ with $u(\sigma(\alpha+\beta\sqrt D))=\alpha$ and $v(\sigma(\alpha+\beta\sqrt D))=\beta$ for $\alpha,\beta\in k$, so it maps $\sigma(\OK)$ onto $A^2$. Since $|2|=1$ and $|\sqrt D|=q^{g+1}$, and $z_{1,2}=u(z)\pm\sqrt D\,v(z)$, the ultrametric inequality gives
\begin{equation}\label{eq:uvnorm}
\max(|z_1|,|z_2|)=\max\big(|u(z)|,\,q^{g+1}|v(z)|\big)
\end{equation}
(as in the proof of Lemma~\ref{lem:Vy}). Consider the $\Fq$-linear map
\[
\Phi:V_y\to\Fq^{\,y-g-1}\times\Fq^{\,y},\qquad\kappa\mapsto\Big(\mathrm{pr}_{y-g-1}\big(u(\sigma(\kappa)x)\big),\,\mathrm{pr}_y\big(v(\sigma(\kappa)x)\big)\Big).
\]
By Lemma~\ref{lem:Vy}, $\dim_{\Fq}V_y=2y+1-g>2y-g-1$, so there is $c\in V_y\setminus\{0\}$ with $\Phi(c)=0$. Put $a=[u(\sigma(c)x)]+[v(\sigma(c)x)]\sqrt D\in\OK$ and $z=\sigma(c)x-\sigma(a)$. Then $u(z)=\{u(\sigma(c)x)\}$ and $v(z)=\{v(\sigma(c)x)\}$, so $|u(z)|\le q^{g-y}$ and $|v(z)|\le q^{-y-1}$. By \eqref{eq:uvnorm}, $|z_i|\le q^{g-y}$. Finally, as $c\in V_y$, we have $|\sigma_{3-i}(c)|\le q^y$, and by \eqref{eq:norm-sigma}
\[
\Big|x_i-\frac{\sigma_i(a)}{\sigma_i(c)}\Big|\le\frac{q^{g-y}}{|\sigma_i(c)|}\le\frac{q^g}{|\sigma_i(c)|\,|\sigma_{3-i}(c)|}=\frac{q^g}{\N(c)}.\qedhere
\]
\end{proof}

Lemma~\ref{lem:box} alone does not yield infinitely many \emph{distinct} quotients: for instance, if $x=(0,z)$ with $0<|z|\le q^g$, then $(a,c)=(0,1)$ satisfies \eqref{eq:box}. The following theorem removes this obstruction at the cost of a constant depending on $x$, which is harmless for our purposes.

\begin{theorem}\label{thm:dirichlet}
Let $K$ be a real quadratic extension of $k$ and $x\in\kinf^2\setminus\sigma(K)$. There is a constant $C=C(K,x)\ge1$ such that infinitely many distinct $r\in K$ can be written as $r=a/c$ with $(a,c)\in\OK\times(\OK\setminus\{0\})$ and
\begin{equation}\label{eq:dirichlet}
\Big|x_i-\frac{\sigma_i(a)}{\sigma_i(c)}\Big|\le\frac{C}{\N(c)}\qquad(i=1,2).
\end{equation}
\end{theorem}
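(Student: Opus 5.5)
The plan is to apply Lemma~\ref{lem:box} for a sequence of box sizes $y\to\infty$ and to show that the resulting quotients $r_y=a_y/c_y$ cannot take only finitely many values. The argument is by contradiction, with $C=q^g$ in the case where we can argue directly. If some quotient $r\in K$ occurs with infinitely many $y$, then we get
\[
|\sigma_i(c_y)x_i-\sigma_i(a_y)|\le q^{g-y}\qquad(i=1,2)
\]
for infinitely many $y$. Since $x\notin\sigma(K)$, some coordinate satisfies $x_i\ne\sigma_i(r)$; write $\delta:=|x_i-\sigma_i(r)|>0$. Then
\[
|\sigma_i(c_y)x_i-\sigma_i(a_y)|=|\sigma_i(c_y)|\,\delta .
\]
This forces $|\sigma_i(c_y)|\le q^{g-y}/\delta\to0$. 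The difficulty is that $c_y$ may be a unit multiple with small $\sigma_i$ and large $\sigma_{3-i}$, so this alone is not a contradiction. The unit group is what makes the argument nontrivial.

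To handle the units, I would renormalise the box. Instead of the symmetric box $V_y$, I would use boxes $|\sigma_1(\kappa)|\le q^{y_1}$ and $|\sigma_2(\kappa)|\le q^{y_2}$ with $y_1+y_2=2y$ and $y_1,y_2$ large. The analogue of Lemma~\ref{lem:Vy} still gives dimension $\ge y_1+y_2+1-g$ once $\min(y_1,y_2)\ge g+1$, by the same $u,v$ coordinates. Equivalently, and more simply, I would apply Lemma~\ref{lem:box} to the transformed point $x'=(\sigma_1(\epsilon^j)x_1,\sigma_2(\epsilon^j)x_2)$. Multiplying $c$ by units does not change $\N(c)$ or the bound \eqref{eq:box}, because
\[
x_i-\frac{\sigma_i(a)}{\sigma_i(c)}
\]
is invariant under $(a,c)\mapsto(a\epsilon^j,c\epsilon^j)$.

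The cleanest way to finish is a counting or height argument. Suppose only finitely many quotients $r^{(1)},\dots,r^{(m)}$ occur. Fix each $r^{(l)}=a^{(l)}/c^{(l)}$ in lowest terms as far as ideals allow, that is, choose $c^{(l)}$ with the ideal $(c^{(l)})$ minimal up to the finitely many class representatives. Any pair $(a,c)$ with $a/c=r^{(l)}$ then has $\N(c)\ge \N(c^{(l)})/C_0$ for a constant $C_0$ depending on $K$ only. The key estimate is the product over both places:
\[
\prod_{i=1,2}|\sigma_i(c)x_i-\sigma_i(a)|=\N(c)\prod_i|x_i-\sigma_i(r)|.
\]
If both $x_i\neq\sigma_i(r)$, the right side is $\ge \N(c)\,\delta_1\delta_2$, which is bounded below. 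The left side is $\le q^{2g-2y}\to0$, a contradiction.

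The main obstacle is the case where exactly one coordinate matches, say $x_2=\sigma_2(r)$ but $x_1\neq\sigma_1(r)$. Here the product gives no information, as the example $x=(0,z)$ shows. My plan is to allow the constant $C=C(K,x)$ and to perturb the target. For each $y$, apply Lemma~\ref{lem:box} to $x^{(y)}=(x_1,x_2+\tau_y)$, with $\tau_y$ chosen of size about $q^{-2y}$ in a direction that excludes all previously found quotients. Equivalently, use an asymmetric box with $y_1$ large and $y_2$ small. Then $|\sigma_1(c)|\delta\le|\sigma_1(c)x_1-\sigma_1(a)|\le q^{g-y_1}$ is incompatible with $|\sigma_1(c)|\ge q^{-y_2}\cdot(\text{const})$. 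That lower bound holds because $\N(c)\ge1$ forces $|\sigma_1(c)|\ge 1/|\sigma_2(c)|\ge q^{-y_2}$. Taking $y_2$ bounded and $y_1\to\infty$ gives $|\sigma_1(c)|\ge q^{-y_2}$ together with
\[
|\sigma_1(c)|\le q^{g-y_1}/\delta,
\]
a contradiction for large $y_1$. So the new quotient cannot be $r$ for any of the finitely many bad $r$. The approximation quality in the asymmetric box is $|x_i-\sigma_i(a)/\sigma_i(c)|\le q^{g}/\N(c)$ by the same computation as in \eqref{eq:box}, since the error at place $i$ is $\le q^{g-y_{3-i}}$ up to the $q^{g}$ loss from the $u,v$ coordinates. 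The only care needed is to verify that this dimension count and error bound survive the asymmetry, possibly at the price of enlarging $C$ by $q^{g+1}$. Iterating this, excluding one more quotient at each step, produces infinitely many distinct $r$.
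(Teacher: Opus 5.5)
There is a genuine gap in the one-coordinate case.

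Your argument for the case $x_i\neq\sigma_i(r)$ for both $i$ is fine; it is the product argument of the paper. The case where exactly one coordinate matches is the whole difficulty, and there your argument does not go through.

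In the asymmetric-box step you use $|\sigma_1(c)x_1-\sigma_1(a)|\le q^{g-y_1}$. But the error bound compatible with the quality $q^g/\N(c)$ at both places, which you invoke a few lines later, is $q^{g-y_{3-i}}$ at place $i$. With the correct bound $q^{g-y_2}$ at place~$1$, your two inequalities become $q^{-y_2}\le|\sigma_1(c)|\le q^{g-y_2}/\delta$. That is merely $\delta\le q^g$, which involves neither $y_1$ nor $y_2$, so letting $y_1\to\infty$ gives nothing.

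If instead you force the error $q^{g-y_1}$ at place~$1$, the dimension count only guarantees an error of order $q^{g-y_2}$ at place~$2$. The quality there would then need $C\ge q^{g-y_2}|\sigma_1(c)|$, and $|\sigma_1(c)|$ can be as large as $q^{y_1}$.

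The obstruction does not depend on the shape of the box. If $\sigma_2(r)=x_2$ and $\delta=|x_1-\sigma_1(r)|$, then a pair $(a,c)$ with $a/c=r$ satisfies \eqref{eq:dirichlet} if and only if $\N(c)\delta\le C$. So as long as $\delta\le q^g$, nothing in your argument stops the box principle from returning $r$ for every choice of $y_1,y_2$.

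The perturbation variant is not equivalent to the asymmetric box, and you leave it unverified. The ``direction'' of $\tau_y$ plays no role in the ultrametric setting. With $|\tau_y|=q^{-2y}$ the product bound only gives $\N(c_y)\le q^{2g}/\delta$, again no contradiction.

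The missing idea is to make $\delta$ larger than the pigeonhole constant $q^g$ before applying Lemma~\ref{lem:box}. There are at most two exceptional quotients $r_i$ with $\sigma_i(r_i)=x_i$, and $\delta_i=|x_{3-i}-\sigma_{3-i}(r_i)|>0$ since $x\notin\sigma(K)$. The paper chooses $m$ with $q^m\delta_i>q^g$ and applies the box principle to $x'=T^mx$. If the output is the exceptional quotient $T^mr_i$ of $x'$, then $1\le\N(c_y)\le q^y\cdot q^{g-y}/(q^m\delta_i)<1$, a contradiction. Every other quotient occurs for only finitely many $y$ by the product argument. Passing to $a_y/(T^mc_y)$ then gives \eqref{eq:dirichlet} with $C=q^{g+m}$.

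Your perturbation idea can be repaired in the same spirit. Shift both coordinates by $\tau$ with $|\tau|=C'q^{-2y}$ and $C'>q^{2g}/\min_i\delta_i$. The product bound then gives $\N(c_y)\le q^{2g}/(C'\delta_i)<1$ for the exceptional quotients. Since $\N(c_y)\le q^{2y}$, the shift only costs $C=\max(C',q^g)$.

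In both versions only the two exceptional quotients need to be excluded, once and for all. No iteration over previously found quotients is required.
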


\begin{proof}
\emph{Step 1: exceptional elements.} Since $\sigma_i$ is injective, for each $i\in\{1,2\}$ there is at most one $r_i\in K$ with $\sigma_i(r_i)=x_i$. As $x\notin\sigma(K)$, we have $\delta_i:=|x_{3-i}-\sigma_{3-i}(r_i)|>0$ whenever $r_i$ exists. Choose $m\ge0$ with $q^m\delta_i>q^g$ for each existing $r_i$, and put $x'=T^mx$. Then $x'\notin\sigma(K)$, and the only $s\in K$ with $\sigma_i(s)=x_i'$ is $T^mr_i$, for which $|x'_{3-i}-\sigma_{3-i}(T^mr_i)|=q^m\delta_i>q^g$.

\emph{Step 2: no quotient recurs infinitely often.} For each $y\ge g+1$ let $(a_y,c_y)$ be given by Lemma~\ref{lem:box} applied to $x'$, and put $s_y=a_y/c_y$. Fix $s\in K$ and suppose $s=s_y$. Put $\rho_i=|x_i'-\sigma_i(s)|$, so that $\rho_i|\sigma_i(c_y)|\le q^{g-y}$ and $|\sigma_i(c_y)|\le q^y$. Not both $\rho_i$ vanish, since $x'\notin\sigma(K)$.
\begin{itemize}
\item If $\rho_1\rho_2\neq0$, then $1\le\N(c_y)\le q^{2g-2y}/(\rho_1\rho_2)$, which bounds $y$.
\item If $\rho_1=0$, then $s=T^mr_1$ and $\rho_2>q^g$ by Step~1, so $1\le\N(c_y)=|\sigma_1(c_y)|\,|\sigma_2(c_y)|\le q^y\cdot q^{g-y}/\rho_2<1$, a contradiction. The case $\rho_2=0$ is symmetric.
\end{itemize}
Hence every $s\in K$ equals $s_y$ for only finitely many $y$, and the set $\{s_y:y\ge g+1\}$ is infinite.

\emph{Step 3: rescaling.} Put $r_y=a_y/(T^mc_y)$. Distinct $s_y$ give distinct $r_y$. Since $\N(T^mc_y)=q^{2m}\N(c_y)$, \eqref{eq:box} for $x'$ gives
\[
\Big|x_i-\frac{\sigma_i(a_y)}{\sigma_i(T^mc_y)}\Big|=q^{-m}\Big|x_i'-\frac{\sigma_i(a_y)}{\sigma_i(c_y)}\Big|\le\frac{q^{g-m}}{\N(c_y)}=\frac{q^{g+m}}{\N(T^mc_y)}.
\]
This is \eqref{eq:dirichlet} with $C=q^{g+m}$.
\end{proof}

\begin{remark}
We do not know whether $C$ can be chosen independently of $x$. This is not needed: in the proof of Theorem~\ref{thm:main}, $C$ enters only through the condition $q^{\varepsilon N/2}\ge C$ in Lemma~\ref{lem:approx}, and $x$ is fixed.
\end{remark}

If $h_K>1$, the elements $a$ and $c$ cannot in general be chosen coprime, since there is no greatest common divisor of elements. As in \cite{BaMo,BDM}, we work with the ideals
\begin{equation}\label{eq:Dfa}
\Dd:=(a)+(c)=\gcd\big((a),(c)\big),\qquad\f:=\Dd^{-1}(c),\qquad\aid:=\Dd^{-1}(a).
\end{equation}
Then $\f$ and $\aid$ are coprime integral ideals, and $(a/c)=\aid\f^{-1}$. We need the approximations with normalised denominators and with $\N(\f)$ tending to infinity.

\begin{lemma}\label{lem:sequence}
Let $K$ be real quadratic, $(x_1,x_2)\in\kinf^2\setminus\sigma(K)$, and let $C=C(K,x)$ be as in Theorem~\ref{thm:dirichlet}. Then there is a sequence $(a_n,c_n)_{n\ge1}$ in $\OK\times(\OK\setminus\{0\})$ such that, with $\f_n:=\gcd((a_n),(c_n))^{-1}(c_n)$,
\begin{enumerate}[label=(\roman*)]
\item $c_n$ is the normalised generator of $(c_n)$ in the sense of Lemma~\ref{lem:normgen};
\item $|x_i-\sigma_i(a_n)/\sigma_i(c_n)|\le C/\N(c_n)\le C/\N(\f_n)$ for $i=1,2$;
\item $\N(\f_n)\to\infty$ as $n\to\infty$.
\end{enumerate}
\end{lemma}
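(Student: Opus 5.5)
The plan is to start from the infinitely many distinct quotients $r=a/c$ supplied by Theorem~\ref{thm:dirichlet}. We then normalise each denominator, and finally show that the ideal denominators $\f$ cannot stay bounded.

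\emph{Normalising the denominator.} For each such quotient, let $c'$ be the normalised generator of $(c)$ from Lemma~\ref{lem:normgen}. Write $c'=uc$ with $u\in\OK^\times$ and put $a'=ua$. Then $a'/c'=a/c$, so the quotient is unchanged. We also have $\N(c')=\N(c)$. Finally, $\sigma_i(a')/\sigma_i(c')=\sigma_i(a)/\sigma_i(c)$ because $\sigma_i$ is a field embedding. Hence \eqref{eq:dirichlet} persists, which gives (i) and the first inequality in (ii).

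The second inequality in (ii) follows from $(c)=\Dd\f$ with $\Dd$ integral. This gives $\N(c)=\N(\Dd)\N(\f)\ge\N(\f)$.

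\emph{Making $\N(\f_n)$ unbounded.} This is the only real point. Since $(r)=\aid\f^{-1}$ with $\aid,\f$ coprime integral ideals, the ideal $\f$ is the denominator ideal of $r$ and depends only on $r$. I claim that for a fixed integral ideal $\f$, only finitely many of our quotients $r$ can have denominator ideal $\f$. Since there are only finitely many integral ideals of bounded norm, this claim gives infinitely many $r$ with $\N(\f)$ arbitrarily large. We can then extract a sequence with $\N(\f_n)\to\infty$.

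To prove the claim, fix $\f$ and choose a nonzero $f\in\f^{h_K}$, or simply any nonzero $f\in\f$. Then $fr\in\OK$, since $(fr)=(f)\f^{-1}\aid$ is integral. Put $b=fr$. Using (ii) together with $\N(c)\ge\N(\f)$, the approximation inequality gives
\[
|\sigma_i(f)x_i-\sigma_i(b)|\le|\sigma_i(f)|\,C/\N(c)\le|\sigma_i(f)|\,C.
\]
This bound is independent of $r$. So $\sigma(b)$ lies in a bounded neighbourhood of the fixed point $(\sigma_1(f)x_1,\sigma_2(f)x_2)$. Since $\sigma(\OK)$ is a discrete lattice in $\kinf^2$ (it corresponds to $A^2$ under the map $(u,v)$ in the proof of Lemma~\ref{lem:box}), only finitely many $b$ qualify. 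Hence only finitely many $r=b/f$ qualify, which proves the claim.

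Putting these together, I would enumerate the distinct quotients and select a subsequence along which $\N(\f_n)$ is strictly increasing. I expect no serious obstacle. The only care needed is to note that the denominator ideal is intrinsic to $r$, and that the bound in the discreteness step uses $C$ but not $\N(c)$.
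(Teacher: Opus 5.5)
Your proposal is correct and follows essentially the paper's proof: normalise the denominator by a unit for (i), use $\f_n\mid(c_n)$ for (ii), and obtain (iii) from a discreteness argument showing that only finitely many of the distinct quotients $r$ can have $\N(\f)\le B$. The only difference is cosmetic. The paper puts all such $r_n$ into the single fractional ideal $\mathfrak{c}^{-1}$, where $\mathfrak{c}$ is the product of all integral ideals of norm at most $B$, and uses the bound $|\sigma_i(r_n)|\le\max(|x_i|,C)$; you instead fix $\f$, clear denominators with some $f\in\f$, and count points of $\sigma(\OK)$, which amounts to the same thing.
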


\begin{proof}
Let $r_1,r_2,\dots$ be the infinitely many distinct elements of $K$ provided by Theorem~\ref{thm:dirichlet}, $r_n=a_n/c_n$. Multiplying $a_n$ and $c_n$ by the same unit, we may assume (i); this changes neither $r_n$ nor $\f_n$ nor \eqref{eq:dirichlet}. Since $\f_n\mid(c_n)$, we have $\N(\f_n)\le\N(c_n)$, which gives (ii). Suppose that $\N(\f_n)\le B$ for infinitely many $n$. Since $(r_n)=\aid_n\f_n^{-1}$ with $\aid_n$ integral, we have $r_n\in\f_n^{-1}\subseteq\mathfrak{c}^{-1}$, where $\mathfrak{c}$ is the product of the finitely many integral ideals of norm at most $B$. Moreover, $|\sigma_i(r_n)|\le\max(|x_i|,C)$ for $i=1,2$. The image of $\mathfrak{c}^{-1}$ under $\sigma$ is discrete in $\kinf^2$, so only finitely many elements of $\mathfrak{c}^{-1}$ satisfy these bounds. This contradicts the distinctness of the $r_n$ and proves (iii).
\end{proof}

\section{Counting lemmas}\label{sec:counting}

Throughout this section, $K=k(\sqrt D)$ is real quadratic with $D$ monic of degree $2g+2$, and $h_K=\#\Cl(\OK)$ is its (ideal) class number. Thanks to the ultrametric structure of $\kinf$, the lattice point counts we need are exact or elementary.

Let $Z_K(u)=L_K(u)/((1-u)(1-qu))$ be the zeta function of $K$, where $L_K\in\Z[u]$ has degree $2g$ and $L_K(1)=\tilde h_K=\#\Pic^0(K)$.

\begin{lemma}\label{lem:idealcount}
There is a constant $c_K=L_K(q^{-1})>0$ such that for all integers $M\ge0$,
\[
\#\{\aid\subseteq\OK:\aid\neq0,\ \N(\aid)\le q^M\}=c_Kq^M+O_K(1).
\]
Moreover, $c_K=q^{-g}\tilde h_K=Rh_Kq^{-g}$.
\end{lemma}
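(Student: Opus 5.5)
The plan is to read the count off the zeta function of $\OK$. The Dirichlet series over nonzero integral ideals, written in $u=q^{-s}$, is
\[
Z_{\OK}(u):=\sum_{\aid\neq0}u^{\log_q\N(\aid)}=\prod_{\p}\big(1-u^{\deg\p}\big)^{-1},
\]
with $\deg\p=\log_q\N(\p)$. The places of $K$ are the primes $\p$ together with $\infty_1,\infty_2$, both of degree one, so the Euler product for $Z_K$ gives
\[
Z_{\OK}(u)=(1-u)^2Z_K(u)=\frac{(1-u)L_K(u)}{1-qu}.
\]
Write $a_n$ for the number of ideals of norm $q^n$. Then
\[
\sum_{n\le M}a_n=[u^M]\,\frac{Z_{\OK}(u)}{1-u}=[u^M]\,\frac{L_K(u)}{1-qu}.
\]

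Next I expand $L_K(u)=\sum_{j=0}^{2g}\ell_ju^j$, so that $[u^M]\,L_K(u)/(1-qu)=\sum_{j\le\min(M,2g)}\ell_jq^{M-j}$. For $M\ge2g$ this equals exactly $q^ML_K(q^{-1})$. For $M<2g$ the difference from $q^ML_K(q^{-1})$ is $-\sum_{j>M}\ell_jq^{M-j}$, which is bounded in terms of $K$. Hence the count is $c_Kq^M+O_K(1)$ with $c_K=L_K(q^{-1})$; in fact the error vanishes for $M\ge2g$. To see $c_K>0$, note that $Z_{\OK}$ has nonnegative coefficients and $a_n\ge1$ for $n=2d$ (the ideals $f\OK$ with $f\in A$ of degree $d$), so the count tends to infinity and its leading coefficient must be positive. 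Alternatively, the functional equation gives it directly.

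For the identity $c_K=q^{-g}\tilde h_K$, I will use the functional equation of $L_K$, namely $L_K(u)=q^gu^{2g}L_K(1/(qu))$. Setting $u=q^{-1}$ gives $L_K(q^{-1})=q^{g}q^{-2g}L_K(1)=q^{-g}\tilde h_K$. Remark~\ref{rem:twoclass} then gives $\tilde h_K=Rh_K$.

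The only point requiring care is the correct Euler factors at the two infinite places, which must both have degree one in the real case. Beyond that, the whole argument is routine, and nothing in it is a real obstacle.
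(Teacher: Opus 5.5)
Your proposal is correct and follows essentially the same route as the paper's proof. Both use the generating series $(1-u)L_K(u)/(1-qu)$ for integral ideals, extract coefficients, and evaluate the functional equation at $u=q^{-1}$ together with $\tilde h_K=Rh_K$. The only differences are cosmetic. You divide by $1-u$ before extracting coefficients, which makes the cumulative count exact for $M\ge 2g$, whereas the paper computes $a_n$ and then sums. You also give a direct positivity argument, while the paper gets $c_K>0$ from $c_K=q^{-g}\tilde h_K$.
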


\begin{proof}
Since $\infty_1$ and $\infty_2$ have degree one, the generating series of the number $a_n$ of integral ideals of norm $q^n$ is
\[
\sum_{n\ge0}a_nu^n=Z_K(u)(1-u)^2=\frac{L_K(u)(1-u)}{1-qu}.
\]
Writing $L_K(u)(1-u)=\sum_{i=0}^{2g+1}p_iu^i$, we get $a_n=\sum_{i\le\min(n,2g+1)}p_iq^{n-i}$, so $a_n=q^n(1-q^{-1})L_K(q^{-1})$ for $n\ge2g+1$. Summing over $n\le M$ gives the claim with $c_K=L_K(q^{-1})$. By the functional equation $L_K(u)=q^gu^{2g}L_K(1/(qu))$, we have $L_K(q^{-1})=q^{-g}\tilde h_K$, and $\tilde h_K=Rh_K$ by Remark~\ref{rem:twoclass}.
\end{proof}

We regard characters $\lambda$ of the class group $\Cl(\OK)=I/P$ as functions on $I$. They are Hecke characters modulo $(1)$ of trivial infinity type; in particular they are unramified at $\infty_1,\infty_2$ with $\lambda(\infty_1)=\lambda(\infty_2)=1$ (Definition~\ref{def:infvalue}).

\begin{lemma}\label{lem:classchar}
Let $\lambda\neq1$ be a character of $\Cl(\OK)$. Then:
\begin{enumerate}[label=(\roman*)]
\item $\lambda$ is not a norm twist;
\item $\sum_{\N(\mathfrak{b})\le q^m}\lambda(\mathfrak{b})=O_K(1)$ for all $m\ge0$, the sum running over non-zero integral ideals.
\end{enumerate}
\end{lemma}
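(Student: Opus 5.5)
For (i) the plan is to compare two ideals of equal norm lying in different classes. Since $\lambda\neq1$, there is a class $C$ with $\lambda(C)\neq1$. The task is then to find integral ideals $\mathfrak b_1$ principal and $\mathfrak b_2\in C$ with $\N(\mathfrak b_1)=\N(\mathfrak b_2)$. If $\lambda$ were a norm twist $\N(\cdot)^{s_0}$, this would give $1=\lambda(\mathfrak b_1)=\lambda(\mathfrak b_2)=\lambda(C)\neq1$, a contradiction.

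To produce such ideals, I would count ideals in a fixed class. For an integral ideal $\aid_0\in C^{-1}$, the ideals $\mathfrak b\in C$ with $\N(\mathfrak b)=q^n$ correspond bijectively to principal ideals $(\kappa)\subseteq\aid_0$ of norm $q^n\N(\aid_0)$. Elements of $\aid_0$ in the box $|\sigma_i(\kappa)|\le q^y$ form an $\Fq$-space. Its dimension is $2y+O_K(1)$, by the same argument as Lemma~\ref{lem:Vy}, since $\aid_0$ has finite index in $\OK$.

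A cleaner route avoids lattice counting altogether. It uses the generating series of the numbers $a_n(C)$ of integral ideals in $C$ of norm $q^n$. By orthogonality,
\[
\sum_n a_n(C)u^n=\frac1{h_K}\sum_{\lambda'}\overline{\lambda'(C)}\,L(u,\lambda').
\]
For $\lambda'\neq1$, the function $L(u,\lambda')$ is a polynomial in $u$, by Theorem~\ref{thm:FE}; this requires $\lambda'$ not to be a norm twist, which is exactly the point being proved. To break this circularity I would instead argue directly as follows. If $\lambda=\N(\cdot)^{s_0}$ on all integral ideals, then $\lambda(\p)=\N(\p)^{s_0}$ is constant on the set of primes of degree $N$. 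Primes of every sufficiently large degree exist in every class, by the Chebotarev density theorem for the Hilbert class field of $\OK$ (the maximal unramified abelian extension in which $\infty_1$ splits completely). Hence primes $\p_1$ principal and $\p_2\in C$ of the same large degree exist, and they give $\lambda(\p_1)\ne\lambda(\p_2)$ with equal norms.

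Alternatively, and more elementarily, I would take any prime $\p\in C$ with $\deg\p=d$. I would then find a principal prime ideal of degree $d$, or an element $\kappa\in\OK$ of norm degree $d$, using Lemma~\ref{lem:Vy}. Indeed, $V_y$ contains elements of every norm degree $2y$, and also of odd norm degrees via elements $\kappa$ with $\deg\sigma_1(\kappa)\neq\deg\sigma_2(\kappa)$. So for every $d$ there is a principal ideal of norm exactly $q^d$ (take $\kappa=a$ or $a+\sqrt D$ with suitable degrees). That is all that is needed: $\mathfrak b_1=(\kappa)$ and $\mathfrak b_2=\p$. This is the version I would write up.

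For (ii), once (i) is known, $\lambda$ is a Hecke character modulo $(1)$ with trivial infinity type that is not a norm twist. It is unramified everywhere with $\lambda(\infty_j)=1$, so its conductor is $0$. By Theorem~\ref{thm:FE}, $\Lambda(s,\lambda)=L(u,\lambda)(1-u)^{-2}$ with $u=q^{-s}$ is a polynomial in $u$ of degree $2g-2$. Hence $L(u,\lambda)=(1-u)^2P(u)$ with $P$ a polynomial. The partial sums $\sum_{n\le m}a_n(\lambda)$ are the coefficients of $L(u,\lambda)/(1-u)=(1-u)P(u)$, a fixed polynomial. They are therefore bounded, and in fact vanish for $m\ge 2g-1$, which gives $O_K(1)$. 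The main obstacle is the existence in (i) of a principal ideal of the right norm; I would settle this explicitly by exhibiting elements of $\OK$ of every norm degree.
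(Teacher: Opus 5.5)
Your part (ii) is the paper's argument: $\lambda$ is primitive of conductor $0$ with $\lambda(\infty_1)=\lambda(\infty_2)=1$, so $L(u,\lambda)=(1-u)^2\Lambda(s,\lambda)$ is a polynomial of degree $2g$. One small slip: $\Lambda(s,\lambda)=P(u)$ has degree exactly $2g-2$, so the partial sums, which are the coefficients of $(1-u)P(u)$, vanish for $m\ge 2g$, not for $m\ge 2g-1$. This does not affect $O_K(1)$.

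For (i) you take a different route from the paper. The paper uses the approximation theorem to get $\alpha\in K^\times$ with $v_{\infty_1}(\alpha)=0$ and $v_{\infty_2}(\alpha)=-1$. This is a principal \emph{fractional} ideal of norm exactly $q$, so $q^{s_0}=\lambda((\alpha))=1$ and $\lambda$ is trivial at once. You instead compare a principal integral ideal with an integral ideal of the same norm in a class $C$ with $\lambda(C)\neq1$. Both arguments rest on the same fact: some principal ideal has odd norm degree. The paper gets this for free, while you have to exhibit it, and that is exactly where your write-up goes wrong.

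\emph{The claim about every $d$.} It is not true that every $d$ is the norm degree of a principal ideal. There need not be any principal ideal of norm $q$; for instance, there is none if no finite place of $K$ has degree one. So if you start from an arbitrary $\p\in C$ of small odd degree, the argument can fail.

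\emph{The criterion for odd degree.} The condition $\deg\sigma_1(\kappa)\neq\deg\sigma_2(\kappa)$ does not make $\N(\kappa)$ an odd power of $q$: $\epsilon$ has degrees $R$ and $-R$. What you need is that $\deg\sigma_1(\kappa)-\deg\sigma_2(\kappa)$ is odd. For $\kappa=a+\sqrt D$, the degree of $a^2-D$ is even unless the leading terms of $a^2$ and $D$ cancel.

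\emph{A correct explicit choice.} Take $a=[\sqrt D]+b$ with $0\neq b\in A$ and $\deg b\le g$.
\begin{itemize}
\item $\sigma_1(\kappa)=2[\sqrt D]+b+\{\sqrt D\}$ has degree $g+1$.
\item $\sigma_2(\kappa)=b-\{\sqrt D\}$ has degree $\deg b$.
\item Hence $\N(\kappa)=q^{g+1+\deg b}$, and taking $\deg b=0$ for $g$ even, $\deg b=1$ for $g$ odd, gives an odd exponent $d_0$.
\end{itemize}
The principal ideals $(T^j)$ and $(T^j\kappa)$ then realise every norm degree $\ge d_0$. So you should take $\mathfrak b_2=(T^j)\mathfrak b$ for any integral $\mathfrak b\in C$ and $j$ large.

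No prime in $C$ is needed, hence no Chebotarev. Incidentally, the class field with group $\Cl(\OK)$ is the one in which both $\infty_1$ and $\infty_2$ split completely. Requiring this only at $\infty_1$ gives group $\Pic^0(K)$, of order $Rh_K$.

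Even simpler: $q^{2s_0}=\lambda((T))=1$ and $q^{d_0s_0}=\lambda((\kappa))=1$ together force $q^{s_0}=1$. This is the paper's argument with the approximation theorem replaced by an explicit element.
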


\begin{proof}
(i) By the approximation theorem there is $\alpha\in K^\times$ with $v_{\infty_1}(\alpha)=0$ and $v_{\infty_2}(\alpha)=-1$, so $\N((\alpha))=q$ by Lemma~\ref{lem:prodformula}. If $\lambda=\N(\cdot)^{s_0}$, then $q^{s_0}=\lambda((\alpha))=1$, and since all norms are powers of $q$, $\lambda$ would be trivial. (ii) By (i) and Theorem~\ref{thm:FE}, $\Lambda(s,\lambda)$ is a polynomial of degree $2g-2$ in $u=q^{-s}$, and $\sum_{n\ge0}\big(\sum_{\N(\mathfrak{b})=q^n}\lambda(\mathfrak{b})\big)u^n=\Lambda(s,\lambda)(1-u)^2$ is a polynomial of degree $2g$. Hence the partial sums are bounded by $\sum_{n\le2g}a_n$, with $a_n$ as in the proof of Lemma~\ref{lem:idealcount}.
\end{proof}

\begin{lemma}\label{lem:classcount}
Let $\f$ be a non-zero integral ideal and $C\in\Cl(\OK)$ an ideal class. For all integers $M\ge0$,
\[
\#\{\mathfrak{k}\in C:\mathfrak{k}\subseteq\OK,\ \N(\mathfrak{k})\le q^M,\ \mathfrak{k}+\f=\OK\}=\frac{c_K}{h_K}q^M\frac{\varphi(\f)}{\N(\f)}+O_K\big(2^{\omega(\f)}\big).
\]
\end{lemma}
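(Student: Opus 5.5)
We detect the class with the orthogonality of class group characters and the coprimality condition with Möbius inversion over divisors of $\f$. Writing $\mathbf 1_C(\mathfrak k)=h_K^{-1}\sum_{\lambda}\overline{\lambda(C)}\lambda(\mathfrak k)$, the sum over $\lambda$ running over the $h_K$ characters of $\Cl(\OK)$, and $\mathbf 1_{\mathfrak k+\f=\OK}=\sum_{\mathfrak d\mid\f,\ \mathfrak d\mid\mathfrak k}\mu(\mathfrak d)$, the count becomes
\[
\frac{1}{h_K}\sum_{\lambda}\overline{\lambda(C)}\sum_{\mathfrak d\mid\f}\mu(\mathfrak d)\lambda(\mathfrak d)\sum_{\substack{\mathfrak b\subseteq\OK\\ \N(\mathfrak b)\le q^M/\N(\mathfrak d)}}\lambda(\mathfrak b),
\]
after substituting $\mathfrak k=\mathfrak d\mathfrak b$. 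Since all norms are powers of $q$, the condition $\N(\mathfrak b)\le q^M/\N(\mathfrak d)$ reads $\N(\mathfrak b)\le q^{M-\deg\mathfrak d}$, where $q^{\deg\mathfrak d}=\N(\mathfrak d)$. The inner sum is empty when $\N(\mathfrak d)>q^M$.

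For $\lambda\ne1$, Lemma~\ref{lem:classchar}(ii) bounds the inner sum by $O_K(1)$ uniformly in the length. Only squarefree $\mathfrak d\mid\f$ contribute, and there are $2^{\omega(\f)}$ of them, so these characters contribute $O_K(2^{\omega(\f)})$ in total.

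For $\lambda=1$, Lemma~\ref{lem:idealcount} gives the inner sum as $c_Kq^M/\N(\mathfrak d)+O_K(1)$ whenever $\N(\mathfrak d)\le q^M$. Summing against $\mu(\mathfrak d)$ gives
\[
\frac{c_K}{h_K}q^M\sum_{\mathfrak d\mid\f}\frac{\mu(\mathfrak d)}{\N(\mathfrak d)}+O_K\big(2^{\omega(\f)}\big),
\]
and the divisor sum equals $\varphi(\f)/\N(\f)$ by the identity recorded before Lemma~\ref{lem:arith}.

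The one point needing care is the squarefree $\mathfrak d$ with $\N(\mathfrak d)>q^M$, which are omitted from the main-term sum. Their omitted contributions $c_Kq^M/\N(\mathfrak d)$ are each less than $c_K$, so in total they are $O(2^{\omega(\f)})$ and are absorbed into the error term. The bounds of Lemmas~\ref{lem:idealcount} and \ref{lem:classchar} are uniform in $M$, so every implied constant depends only on $K$.
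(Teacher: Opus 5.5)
Your proposal is correct and is essentially the paper's proof: class group orthogonality plus M\"obius inversion over $\mathfrak d\mid\f$, then Lemma~\ref{lem:classchar}(ii) for $\lambda\neq1$ and Lemma~\ref{lem:idealcount} for $\lambda=1$. Your handling of the divisors with $\N(\mathfrak d)>q^M$, each costing at most $c_K$, is the same bookkeeping the paper uses when it notes that the trivial-character asymptotic still holds there.
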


\begin{proof}
By orthogonality of the characters of $\Cl(\OK)$, the left-hand side equals
\[
\frac1{h_K}\sum_\lambda\overline{\lambda(C)}\sum_{\substack{\N(\mathfrak{k})\le q^M\\\mathfrak{k}+\f=\OK}}\lambda(\mathfrak{k})=\frac1{h_K}\sum_\lambda\overline{\lambda(C)}\sum_{\mathfrak{d}\mid\f}\mu(\mathfrak{d})\lambda(\mathfrak{d})\sum_{\N(\mathfrak{b})\le q^M/\N(\mathfrak{d})}\lambda(\mathfrak{b}).
\]
For $\lambda\neq1$ the innermost sum is $O(1)$ by Lemma~\ref{lem:classchar}(ii) (it is empty if $\N(\mathfrak{d})>q^M$). For $\lambda=1$ it equals $c_Kq^M/\N(\mathfrak{d})+O(1)$ by Lemma~\ref{lem:idealcount}; this also holds if $\N(\mathfrak{d})>q^M$, since then the sum vanishes and $c_Kq^M/\N(\mathfrak{d})\le c_K$. Summing over the $2^{\omega(\f)}$ square-free divisors $\mathfrak{d}$ of $\f$ gives the claim.
\end{proof}

\begin{lemma}\label{lem:DVy}
Let $\Dd$ be a non-zero integral ideal and $y\in\Z$ with $2y\ge\log_q\N(\Dd)$. Then
\[
\#\big((\Dd\cap V_y)\setminus\{0\}\big)\ll_K\frac{q^{2y}}{\N(\Dd)},
\]
with $V_y$ as in Lemma~\ref{lem:Vy}.
\end{lemma}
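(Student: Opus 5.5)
The set $\Dd\cap V_y$ is an $\Fq$-vector space, being the intersection of two $\Fq$-subspaces of $\OK$. So it suffices to show $\dim_{\Fq}(\Dd\cap V_y)\le 2y-\log_q\N(\Dd)+O_K(1)$. Then the count is at most $q^{\dim}\ll_K q^{2y}/\N(\Dd)$.

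The plan is to compare with a larger box using the quotient map $\OK\to\OK/\Dd$. Put $y'=y+c$, with $c=c(K)\ge0$ chosen so that $y'\ge g+1$; we will also need the condition $2y'+1-g\ge\log_q\N(\Dd)$. Since $V_y\subseteq V_{y'}$, it is enough to bound $\dim(\Dd\cap V_{y'})$. Lemma~\ref{lem:Vy} gives $\dim V_{y'}=2y'+1-g$. The kernel of $V_{y'}\to\OK/\Dd$ is $\Dd\cap V_{y'}$, so
\[
\dim(\Dd\cap V_{y'})=2y'+1-g-\dim(\text{image}).
\]
The task is therefore to show that the image has codimension $O_K(1)$ in $\OK/\Dd$, which has dimension $\log_q\N(\Dd)$. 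Equivalently, we must show $V_{y'}+\Dd\supseteq$ a subspace of $\OK/\Dd$ of codimension $O_K(1)$.

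This surjectivity step is the main obstacle; I would handle it with Riemann--Roch. In divisor language, $V_{y'}=L(y'\infty_1+y'\infty_2)$ and $\Dd\cap V_{y'}=L(y'(\infty_1+\infty_2)-\Dd)$, viewing $\Dd$ as an effective divisor. Riemann--Roch gives $\ell(E)=\deg E+1-g$ whenever $\deg E>2g-2$, and $\ell(E)\le\max(0,\deg E+1)$ always. Set $E=y'(\infty_1+\infty_2)-\Dd$, of degree $2y'-\log_q\N(\Dd)$. With $y'=y$ and $2y\ge\log_q\N(\Dd)$, either bound gives
\[
\ell(E)\le 2y-\log_q\N(\Dd)+1+g.
\]
Hence the count is at most $q^{g+1}q^{2y}/\N(\Dd)$, which is the claim. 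So in practice I would skip the auxiliary $y'$ and use the general upper bound $\ell(E)\le\deg E+1$ valid for $\deg E\ge0$ (Clifford/Riemann inequality). The only subtlety is $\deg E\ge 0$, which is exactly the hypothesis $2y\ge\log_q\N(\Dd)$.

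If I wanted to stay self-contained without divisors, I would run the ultrametric argument instead. Pick a nonzero $\delta\in\Dd$ of small size and argue via the normalised elements of Lemma~\ref{lem:normgen}. That route is messier in the non-principal case, so the Riemann--Roch identification of $V_y$ with $L(y\infty_1+y\infty_2)$ is the route I would take. Its justification is the same as Lemma~\ref{lem:Vy}: elements of $\OK$ have no finite poles, and $|\sigma_i(\kappa)|\le q^y$ means a pole of order at most $y$ at $\infty_i$.
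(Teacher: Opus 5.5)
Your argument is correct, but it takes a different route from the paper.

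\textbf{The paper's route.} The paper sorts the nonzero $\kappa\in\Dd\cap V_y$ by the principal ideal $(\kappa)=\Dd\mathfrak{k}$ they generate. There are $a_m\ll q^m$ choices of $\mathfrak{k}$ with $\N(\mathfrak{k})=q^m$ and $m\le 2y-\log_q\N(\Dd)$. Using $\OK^\times=\Fq^\times\times\epsilon^{\Z}$, at most $(q-1)\big((2y-\log_q\N(\Dd)-m)/R+1\big)$ generators of a given $(\kappa)$ lie in the box. Summing over $m$ then gives $\ll q^{2y}/\N(\Dd)$.

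\textbf{Your route.} You note that $\Dd\cap V_y$ is the Riemann--Roch space $L(y\infty_1+y\infty_2-\Dd)$, and apply the bound $\ell(E)\le\deg E+1$ for $\deg E\ge0$. The identification is right for three reasons:
\begin{itemize}
\item $\OK$ is the intersection of the valuation rings at the finite places;
\item $|\sigma_i(\kappa)|\le q^y$ means exactly $v_{\infty_i}(\kappa)\ge -y$;
\item the place attached to $\p$ has degree $\log_q\N(\p)$, so $\deg\Dd=\log_q\N(\Dd)$.
\end{itemize}

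\textbf{Comparison.} Your version is shorter and gives the explicit bound $q\cdot q^{2y}/\N(\Dd)$. It uses neither the unit group and regulator nor the ideal count of Lemma~\ref{lem:idealcount}. It also works verbatim for any global function field and any finite set of places at infinity. The paper's version stays within its elementary ideal-theoretic, lattice-point framework and reuses the normalised-generator and ideal-counting machinery it needs anyway. In both approaches the hypothesis $2y\ge\log_q\N(\Dd)$ only serves to make the statement non-vacuous: otherwise the set is empty.

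\textbf{Two small remarks.}
\begin{itemize}
\item The detour through $y'$ and the quotient map $V_{y'}\to\OK/\Dd$ is unnecessary and can be dropped.
\item The bound $\ell(E)\le\deg E+1$ is neither Riemann's inequality (that is the lower bound $\ell(E)\ge\deg E+1-g$) nor Clifford's theorem. It is the elementary consequence of $\ell(B)-\ell(A)\le\deg B-\deg A$ for $A\le B$, applied to $0\le E+(f)$ with $0\neq f\in L(E)$. This step uses that $\Fq$ is the full constant field of $K$, so that $\ell(0)=1$, which the paper records.
\end{itemize}
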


\begin{proof}
Put $E=\log_q\N(\Dd)$. A non-zero $\kappa\in\Dd\cap V_y$ generates an ideal $(\kappa)=\Dd\mathfrak{k}$ with $\mathfrak{k}$ integral, and $\N(\Dd\mathfrak{k})=|\sigma_1(\kappa)||\sigma_2(\kappa)|\le q^{2y}$, so $\N(\mathfrak{k})=q^m$ with $m\le2y-E$. Conversely, given the principal ideal $(\kappa)$, its generators are $c\epsilon^j\kappa_0$ with $c\in\Fq^\times$ and $j\in\Z$, where $\kappa_0$ is the normalised generator. If $d_i=\deg\sigma_i(\kappa_0)$, then $\deg\sigma_1(c\epsilon^j\kappa_0)=d_1+Rj$ and $\deg\sigma_2(c\epsilon^j\kappa_0)=d_2-Rj$, and both have to be $\le y$. Since $d_1+d_2=m+E$, the number of admissible $j$ is at most $(2y-m-E)/R+1$. Using the bound $a_m\ll q^m$ from Lemma~\ref{lem:idealcount}, we obtain
\[
\#\big((\Dd\cap V_y)\setminus\{0\}\big)\le(q-1)\sum_{0\le m\le2y-E}a_m\Big(\frac{2y-E-m}{R}+1\Big)\ll q^{2y-E}\sum_{l\ge0}q^{-l}(l+1)\ll q^{2y-E}.\qedhere
\]
\end{proof}

\section{Proof of Theorem~\ref{thm:main}}\label{sec:proof}

Throughout this section, $K=k(\sqrt D)$ is real quadratic with ideal class number $h_K$, $D$ is monic of degree $2g+2$, and $\epsilon,R$ are as in \eqref{eq:eps-norm}. We fix $x=(x_1,x_2)\in\kinf^2\setminus\sigma(K)$ and let $C=C(K,x)$ be as in Theorem~\ref{thm:dirichlet}. We write $\e(x)=\exp(2\pi ix)$ for $x\in\R$. All implied constants may depend on $K$ and $\varepsilon$; the threshold in ``$F$ large enough'' may depend in addition on $x$.

\subsection{Choice of parameters}
Fix $0<\varepsilon<1/10$. Let $(a,c)=(a_n,c_n)$ be a member of the sequence in Lemma~\ref{lem:sequence}, and let $\Dd,\f,\aid$ be as in \eqref{eq:Dfa}. Put
\[
F:=\log_q\N(\f),\qquad E:=\log_q\N(\Dd),\qquad\text{so that }\N(c)=q^{F+E}.
\]
By Lemma~\ref{lem:sequence}(iii) we may assume $F$ to be as large as we like. We define
\begin{equation}\label{eq:params}
N:=\Big\lfloor\frac{F}{2/3-\varepsilon/2}\Big\rfloor,\qquad L:=N-F,\qquad y:=\Big\lfloor\frac{L+E}{2}\Big\rfloor-R,\qquad M:=2y-R-E.
\end{equation}
Then $(2/3-\varepsilon/2)N\le F<(2/3-\varepsilon/2)(N+1)$, so that for $F$ large enough
\begin{equation}\label{eq:params-ineq}
\begin{gathered}
N>F,\qquad\Big(\frac13+\frac\varepsilon2\Big)N-1<L\le\Big(\frac13+\frac\varepsilon2\Big)N,\\
L+E-1-2R\le2y\le L+E-2R<F+E,\qquad M\ge L-1-3R\ge0,\qquad y\ge g+1.
\end{gathered}
\end{equation}
(For $L<F$ we use $\varepsilon<1/3$.) Note that the size of $\Dd$ is not controlled; it will cancel out in the final estimate.

\subsection{Reduction to prime elements in residue classes}
Let $\mathcal{P}_N$ be the set of normalised generators (Lemma~\ref{lem:normgen}) of the principal prime ideals $\p$ of $\OK$ with $\N(\p)=q^N$. Since $N>F$, no such $\p$ divides $\f$, so $\mathcal{P}_N\subseteq K(\f)$. Let
\[
\cS:=\{\kappa\in\Dd\cap V_y:\Dd^{-1}(\kappa)+\f=\OK\},
\]
with $V_y$ as in Lemma~\ref{lem:Vy}. For $\kappa\in\cS$ we have $v_\p(\kappa)=v_\p(\Dd)=v_\p(a)$ for all $\p\mid\f$ (recall that $\aid=\Dd^{-1}(a)$ is coprime to $\f$), so $a\kappa^{-1}\in K(\f)$.

\begin{lemma}\label{lem:congruence}
Let $p\in\mathcal{P}_N$ and $\kappa\in\cS$. Then $\kappa\equiv ap\bmod c$ if and only if $ap\kappa^{-1}\eqs1\bmod\f$.
\end{lemma}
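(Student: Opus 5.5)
The plan is to compare both congruences prime by prime, using $(c)=\Dd\f$ and $(a)=\Dd\aid$ with $\aid+\f=\OK$. First I record the valuations. For $\p\mid\f$ we have $v_\p(a)=v_\p(\Dd)$, since $\aid$ is coprime to $\f$. Also $v_\p(p)=0$, because $N>F$ forces $\p\nmid\f$ for the prime $(p)$ of norm $q^N$. Finally $v_\p(\kappa)=v_\p(\Dd)$ by the definition of $\cS$. Hence $ap\kappa^{-1}\in K(\f)$, as already noted before the lemma.

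The condition $\kappa\equiv ap\bmod c$ means $v_\p(\kappa-ap)\ge v_\p(c)=v_\p(\Dd)+v_\p(\f)$ for every prime $\p$. I split the primes into two cases.

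\emph{Primes $\p\nmid\f$.} Here $v_\p(c)=v_\p(\Dd)$. Since $\kappa\in\Dd$ and $a\in\Dd$, we get $\kappa-ap\in\Dd$, so the condition holds automatically at these primes.

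\emph{Primes $\p\mid\f$.} The condition reads $v_\p(\kappa-ap)\ge v_\p(\Dd)+v_\p(\f)$. Write $\kappa-ap=-\kappa\,(ap\kappa^{-1}-1)$ and use $v_\p(\kappa)=v_\p(\Dd)$. The condition becomes $v_\p(ap\kappa^{-1}-1)\ge v_\p(\f)$.

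Taking the last condition over all $\p\mid\f$ gives exactly the definition of $ap\kappa^{-1}\eqs1\bmod\f$. Every step above is reversible, so the two statements are equivalent.

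The only point needing care is the reduction from ideal divisibility to valuations: $\kappa-ap\in(c)$ if and only if $v_\p(\kappa-ap)\ge v_\p(c)$ for all $\p$. This holds in the Dedekind domain $\OK$; the case $\kappa=ap$ is trivially fine, with valuation $+\infty$. I expect no real obstacle here; this is routine bookkeeping with valuations.
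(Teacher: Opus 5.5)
Your proof is correct and is essentially the paper's argument. The paper disposes of the primes $\p\nmid\f$ in one step, by noting that $\Dd^{-1}(\kappa-ap)$ is integral and $(c)=\Dd\f$, so that $c\mid\kappa-ap$ if and only if $\f\mid\Dd^{-1}(\kappa-ap)$; it then converts the condition at $\p\mid\f$ using $v_\p(\kappa)=v_\p(\Dd)$, exactly as you do.
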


\begin{proof}
Since $a,\kappa\in\Dd$, the ideal $\mathfrak{n}:=\Dd^{-1}(\kappa-ap)$ is integral (or zero). As $(c)=\Dd\f$, we have $c\mid\kappa-ap$ if and only if $\f\mid\mathfrak{n}$, i.e.\ $v_\p(\kappa-ap)-v_\p(\Dd)\ge v_\p(\f)$ for all $\p\mid\f$. Since $v_\p(\Dd)=v_\p(\kappa)$ for $\p\mid\f$, this is equivalent to $v_\p(1-ap\kappa^{-1})\ge v_\p(\f)$ for all $\p\mid\f$, i.e.\ to $ap\kappa^{-1}\eqs1\bmod\f$.
\end{proof}

\begin{lemma}\label{lem:approx}
Let $p\in\mathcal{P}_N$ and $\kappa\in\cS$ with $ap\kappa^{-1}\eqs1\bmod\f$. Then $b:=(ap-\kappa)/c\in\OK$ and
\[
\Big|x_i-\frac{\sigma_i(b)}{\sigma_i(p)}\Big|\le Cq^{-F}\le\N(\p)^{-2/3+\varepsilon}\qquad(i=1,2),
\]
provided that $F$ is large enough (in terms of $K$, $\varepsilon$ and $x$).
\end{lemma}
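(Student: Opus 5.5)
The plan has three steps: show that $b$ is integral, write the error $x_i-\sigma_i(b)/\sigma_i(p)$ as a sum of two terms, and bound each term using the parameter choices \eqref{eq:params}.

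\emph{Integrality.} By Lemma~\ref{lem:congruence}, the hypothesis $ap\kappa^{-1}\eqs1\bmod\f$ is equivalent to $c\mid\kappa-ap$. Hence $b=(ap-\kappa)/c\in\OK$.

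\emph{Decomposition.} From $bc=ap-\kappa$ we get
\[
\frac{\sigma_i(b)}{\sigma_i(p)}=\frac{\sigma_i(a)}{\sigma_i(c)}-\frac{\sigma_i(\kappa)}{\sigma_i(c)\sigma_i(p)}.
\]
The ultrametric inequality then gives
\[
\Big|x_i-\frac{\sigma_i(b)}{\sigma_i(p)}\Big|\le\max\Big(\Big|x_i-\frac{\sigma_i(a)}{\sigma_i(c)}\Big|,\ \frac{|\sigma_i(\kappa)|}{|\sigma_i(c)|\,|\sigma_i(p)|}\Big).
\]

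\emph{First term.} Lemma~\ref{lem:sequence}(ii) bounds it by $C/\N(c)=Cq^{-F-E}\le Cq^{-F}$.

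\emph{Second term.} We use three bounds:
\begin{itemize}
\item $|\sigma_i(\kappa)|\le q^y$, since $\kappa\in V_y$;
\item $|\sigma_i(p)|\ge q^{(N-R)/2}$, by \eqref{eq:normgen-size}, since $p$ is a normalised generator;
\item $|\sigma_i(c)|\ge q^{(F+E-R)/2}$, by \eqref{eq:normgen-size} applied to $c$, which is normalised by Lemma~\ref{lem:sequence}(i), with $\N(c)=q^{F+E}$.
\end{itemize}
Combining these, the second term is at most
\[
q^{\,y-(N+F+E)/2+R}.
\]
By \eqref{eq:params-ineq}, $2y\le L+E-2R$ with $L=N-F$. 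The exponent is therefore at most $(N-F+E)/2-R-(N+F+E)/2+R=-F$. So the second term is at most $q^{-F}\le Cq^{-F}$, as $C\ge1$. This is exactly where the subtraction of $R$ in the definition of $y$ is needed, and the quantity $E$ cancels as the text anticipated.

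\emph{Final inequality.} It remains to check $Cq^{-F}\le q^{-N(2/3-\varepsilon)}=\N(\p)^{-2/3+\varepsilon}$. Since $F\ge(2/3-\varepsilon/2)N$, we have $Cq^{-F}\le Cq^{-(2/3-\varepsilon)N}q^{-\varepsilon N/2}$. This suffices once $q^{\varepsilon N/2}\ge C$, which holds for $F$ large, because $N>F$ and $C=C(K,x)$ is fixed. I expect no real obstacle here. The only delicate point is keeping track of the $R$-losses from normalised generators in the lower bounds for $|\sigma_i(c)|$ and $|\sigma_i(p)|$, and these are absorbed exactly by the $-R$ in $y$.
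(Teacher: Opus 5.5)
Your proposal is correct and follows the paper's proof essentially step by step. It gets integrality of $b$ from Lemma~\ref{lem:congruence}, bounds $|\sigma_i(\kappa)|/(|\sigma_i(c)||\sigma_i(p)|)$ by $q^{-F}$ using Lemma~\ref{lem:normgen} and $2y\le L+E-2R$, combines this with Lemma~\ref{lem:sequence}(ii) via the ultrametric inequality, and concludes with the condition $q^{\varepsilon N/2}\ge C$.
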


\begin{proof}
By Lemma~\ref{lem:congruence}, $b\in\OK$, and $ap-bc=\kappa$. By Lemma~\ref{lem:normgen}, $|\sigma_i(c)|\ge q^{(F+E-R)/2}$ and $|\sigma_i(p)|\ge q^{(N-R)/2}$. Hence
\[
\Big|\frac{\sigma_i(a)}{\sigma_i(c)}-\frac{\sigma_i(b)}{\sigma_i(p)}\Big|=\frac{|\sigma_i(\kappa)|}{|\sigma_i(c)|\,|\sigma_i(p)|}\le q^{y+R-(N+F+E)/2}\le q^{-F}
\]
by the definition of $y$. Since $|x_i-\sigma_i(a)/\sigma_i(c)|\le C/\N(c)\le Cq^{-F}$ and $C\ge1$, the ultrametric inequality gives $|x_i-\sigma_i(b)/\sigma_i(p)|\le Cq^{-F}$. Finally, $Cq^{-F}\le Cq^{-(2/3-\varepsilon/2)N}\le q^{-(2/3-\varepsilon)N}=\N(\p)^{-2/3+\varepsilon}$ as soon as $q^{\varepsilon N/2}\ge C$.
\end{proof}

Consequently, Theorem~\ref{thm:main} follows once we show that
\begin{equation}\label{eq:Sigma}
\Sigma:=\#\{(p,\kappa)\in\mathcal{P}_N\times\cS:ap\kappa^{-1}\eqs1\bmod\f\}>0
\end{equation}
for all sufficiently large $F$. Indeed, $F$ and hence $N=\log_q\N(\p)$ tend to infinity along our sequence, so we obtain infinitely many pairs $(b,\pi)=(b,p)$ satisfying \eqref{eq:main}.

Since $a\kappa^{-1}$ and $p$ lie in $K(\f)$, orthogonality of the Dirichlet characters modulo $\f$ gives
\begin{equation}\label{eq:orth}
\Sigma=\frac1{\varphi(\f)}\sum_{\chi\bmod\f}\sum_{\kappa\in\cS}\chi(a\kappa^{-1})\sum_{p\in\mathcal{P}_N}\chi(p).
\end{equation}

\subsection{Exact decomposition into Hecke characters}
Since $\OK^\times$ is infinite, $\chi(p)$ is not a function of the ideal $(p)$, and we cannot directly apply the Riemann hypothesis. The following proposition is the key step. Recall the Hecke character $\psi$ of order $2R$ from \eqref{eq:psi}. For $\gamma\in\R$ and $n\in\Z$ put
\begin{equation}\label{eq:cn}
c_n(\gamma):=\frac1{2R}\sum_{j=-R+1}^{R}\e\Big(-\frac{(\gamma+n)j}{2R}\Big).
\end{equation}

\begin{proposition}\label{prop:decomp}
Let $\chi$ be a Dirichlet character modulo $\f$, and let $\gamma_\chi\in(-1/2,1/2]$ be defined by $\chi(\epsilon)=\e(-\gamma_\chi)$. For $\alpha\in K(\f)$ put
\begin{equation}\label{eq:chiHecke}
\Hk^{(\chi)}((\alpha)):=\chi(\alpha)\,\overline{\chi\big(\sgn\sigma_1(\alpha)\big)}\,\Big|\frac{\sigma_1(\alpha)}{\sigma_2(\alpha)}\Big|^{i\pi\gamma_\chi/(R\log q)}.
\end{equation}
Then the following hold.
\begin{enumerate}[label=(\roman*)]
\item $\Hk^{(\chi)}$ is a well-defined Hecke character on $P(\f)$ belonging to $\chi$. Each of its $h_K$ extensions to $I(\f)$ (Proposition~\ref{prop:extension}) is a Hecke character modulo $\f$ whose conductor has degree at most $F+1$.
\item If $\chi=\chi_0$ is principal, then $\gamma_\chi=0$ and $\Hk^{(\chi)}$ is trivial on $P(\f)$.
\item If $\chi\neq\chi_0$, then for every $n\in\Z$, no Hecke character on $I(\f)$ whose restriction to $P(\f)$ equals $\Hk^{(\chi)}\psi^n$ is a norm twist.
\item For every normalised generator $p$ of a principal prime ideal not dividing $\f$,
\begin{equation}\label{eq:exact}
\chi(p)=\sum_{n=0}^{2R-1}c_n(\gamma_\chi)\,\big(\Hk^{(\chi)}\psi^n\big)((p)).
\end{equation}
\item $\sum_{n=0}^{2R-1}|c_n(\gamma)|^2=1$, hence $\sum_{n=0}^{2R-1}|c_n(\gamma)|\le\sqrt{2R}$; moreover $c_n(0)=1$ if $n\equiv0\bmod2R$ and $c_n(0)=0$ otherwise.
\end{enumerate}
\end{proposition}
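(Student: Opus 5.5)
Part (i) is a matter of matching \eqref{eq:chiHecke} against the general shape \eqref{eq:infpart-full}. The plan is to take $\eta_1=\eta_2=1$ and $\omega_1=\overline{\chi|_{\Fq^\times}}$, which forces $\omega_2=\overline{\chi|_{\Fq^\times}\omega_1}=1$. With $\omega_2=1$ and $\eta_1=\eta_2=1$, condition \eqref{eq:gamma} reads $\chi(\epsilon)=\e(-\gamma)$, so $\gamma\equiv\gamma_\chi\pmod 1$, and we may take $n$ so that $n+\gamma=\gamma_\chi$. Theorem~\ref{thm:hecke-real} then gives a well-defined character on $P(\f)$ belonging to $\chi$. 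For a direct check, I would verify $\Hk^{(\chi)}((c\epsilon^j\alpha))=\Hk^{(\chi)}((\alpha))$:
\begin{itemize}
\item the constant $c$ cancels between $\chi(c)$ and $\overline{\chi(\sgn\sigma_1(c))}$;
\item $\epsilon$ contributes $\chi(\epsilon)$ from the first factor, $1$ from the sign factor since $\sgn\sigma_1(\epsilon)=1$, and $\exp(i\pi\gamma_\chi\cdot 2R/R)=\e(\gamma_\chi)$ from the size factor.
\end{itemize}
These cancel. For the conductor: $\eta_w=1$ gives $c_{\infty_2}=0$ and $c_{\infty_1}\le1$, and the finite conductor divides $\f$. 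So the degree is at most $F+1$, and class group twists do not change this.

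Part (ii) is immediate: $\chi_0(\epsilon)=1$ gives $\gamma_\chi=0$, and every factor in \eqref{eq:chiHecke} is then trivial on $K(\f)$. Part (iii) follows from Lemma~\ref{lem:nottwist}. The character $\Hk^{(\chi)}\psi^n$ belongs to $\chi$ and has $\eta_1=\eta_2=1$, since $\psi^n$ only changes the size factor. Hence its restriction to $P(\f)$ is not a norm twist, and so no extension of it to $I(\f)$ is a norm twist either.

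Part (iv) is the key step. Let $p$ be normalised, so $\sgn\sigma_1(p)=1$, and put $d=\deg\sigma_1(p)-\deg\sigma_2(p)\in(-R,R]$. By \eqref{eq:chiHecke} and \eqref{eq:psi},
\[
\big(\Hk^{(\chi)}\psi^n\big)((p))=\chi(p)\,\e\Big(\frac{(\gamma_\chi+n)d}{2R}\Big).
\]
Summing against $c_n(\gamma_\chi)$ gives
\[
\chi(p)\,\frac1{2R}\sum_{j=-R+1}^{R}\e\Big(\frac{\gamma_\chi(d-j)}{2R}\Big)\sum_{n=0}^{2R-1}\e\Big(\frac{n(d-j)}{2R}\Big).
\]
The inner sum equals $2R$ exactly when $j\equiv d\pmod{2R}$, and is $0$ otherwise. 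Both $j$ and $d$ range over the complete residue system $(-R,R]$, so only $j=d$ survives, where the $\gamma_\chi$-phase is $1$; this yields $\chi(p)$. The one point to check here is that the window $(-R,R]$ in \eqref{eq:cn} matches the normalisation window of Lemma~\ref{lem:normgen}. That match is exactly what makes the decomposition exact.

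Part (v) comes from writing $c_n(\gamma)=\frac1{2R}\sum_j\e(-\gamma j/2R)\,\e(-nj/2R)$. This is the unitary discrete Fourier transform on $\Z/2R\Z$, up to the factor $1/\sqrt{2R}$, applied to a vector whose entries all have modulus $1$. Parseval then gives $\sum_n|c_n|^2=\frac1{2R}\cdot 2R=1$, and Cauchy--Schwarz gives $\sum_n|c_n(\gamma)|\le\sqrt{2R}$. For $\gamma=0$, $c_n(0)$ is the average of a $2R$-th root of unity over a full period, which is $1$ if $2R\mid n$ and $0$ otherwise.
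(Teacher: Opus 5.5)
Your proposal is correct and follows the paper's proof essentially step by step. For (i) you match the character to Theorem~\ref{thm:hecke-real} with $\omega_2=\eta_1=\eta_2=1$ and check triviality on units directly, for (iii) you use Lemma~\ref{lem:nottwist}, for (iv) you use discrete Fourier inversion on $\Z/2R\Z$ (written out by swapping the sums), and for (v) you use Parseval plus Cauchy--Schwarz. Your choice of $n$ with $n+\gamma=\gamma_\chi$ in (i) is slightly more careful than the paper's ``case $n=0$'', since $\gamma\in[0,1)$ while $\gamma_\chi\in(-1/2,1/2]$.
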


\begin{proof}
(i) The infinity part of \eqref{eq:chiHecke} is \eqref{eq:infpart-real} with $\omega_1=\overline{\chi|_{\Fq^\times}}$, $\omega_2=1$, $\eta_1=\eta_2=1$ and $v=\pi\gamma_\chi/(R\log q)$. This is the case $n=0$ of Theorem~\ref{thm:hecke-real}, since the condition at constants gives $\omega_2=\overline{\chi\omega_1}=1$, and \eqref{eq:gamma} reads $\chi(\epsilon)=\e(-\gamma)$ because $\omega_2=1$ and $\eta_1=\eta_2=1$. For the reader's convenience, we verify directly that \eqref{eq:chiHecke} is trivial on units. For $c\in\Fq^\times$ we get $\chi(c)\overline{\chi(c)}\cdot1=1$. For $\epsilon$, using \eqref{eq:eps-norm} and \eqref{eq:eps-sigma2}, we get
\[
\chi(\epsilon)\cdot\overline{\chi(1)}\cdot q^{2R\cdot i\pi\gamma_\chi/(R\log q)}=\e(-\gamma_\chi)\e(\gamma_\chi)=1.
\]
Hence $\Hk^{(\chi)}$ is well defined and multiplicative on $P(\f)$. By Propositions~\ref{prop:extension} and~\ref{prop:idelic}, its extensions to $I(\f)$ are Hecke characters, with conductor dividing $\f\cdot\infty_1$, as the infinity part is at most tamely ramified at $\infty_1$ and unramified at $\infty_2$; so the degree of the conductor is at most $F+1$.

(ii) If $\chi=\chi_0$, then $\chi(\epsilon)=1$, so $\gamma_\chi=0$, and $\chi(\sgn\sigma_1(\alpha))=1$.

(iii) The character $\Hk^{(\chi)}\psi^n$ belongs to $\chi$ (as $\psi$ has trivial finite part), and its infinity part has trivial principal-unit components. By Lemma~\ref{lem:nottwist}, it is not a norm twist on $P(\f)$, so no extension of it to $I(\f)$ is a norm twist either.

(iv) Let $j=\deg\sigma_1(p)-\deg\sigma_2(p)$, so that $j\in J:=\{-R+1,\dots,R\}$ by \eqref{eq:normgen}. Since $\sgn\sigma_1(p)=1$,
\[
\Hk^{(\chi)}((p))=\chi(p)\,q^{ij\pi\gamma_\chi/(R\log q)}=\chi(p)\,\e\Big(\frac{\gamma_\chi j}{2R}\Big),\qquad\psi^n((p))=\e\Big(\frac{nj}{2R}\Big).
\]
The set $J$ is a complete system of residues modulo $2R$. By discrete Fourier inversion on $\Z/2R\Z$, the function $j\mapsto\e(-\gamma_\chi j/(2R))$ on $J$ satisfies
\[
\e\Big(-\frac{\gamma_\chi j}{2R}\Big)=\sum_{n=0}^{2R-1}c_n(\gamma_\chi)\,\e\Big(\frac{nj}{2R}\Big)\qquad(j\in J),
\]
with $c_n$ as in \eqref{eq:cn}. Multiplying by $\Hk^{(\chi)}((p))$ gives \eqref{eq:exact}.

(v) This is Parseval's identity $\sum_n|c_n(\gamma)|^2=\frac1{2R}\sum_{j\in J}|\e(-\gamma j/(2R))|^2=1$, followed by the Cauchy--Schwarz inequality. The last assertion is the orthogonality of additive characters of $\Z/2R\Z$.
\end{proof}

\begin{remark}
For $\gamma\notin\Z$, summing the geometric series gives
\[
c_n(\gamma)=(-1)^n\frac{\sin(\pi\gamma)}{2R\sin\big(\pi(\gamma+n)/(2R)\big)}\,\e\Big(-\frac{\gamma+n}{4R}\Big),
\]
which, as $R\to\infty$ and with $n$ fixed, tends to $(-1)^n\sin(\pi\gamma)/(\pi(\gamma+n))$. This is the Fourier coefficient that appears for real quadratic number fields in \cite[\S4]{BDM}, where $\theta=-\gamma$. In the number field case, however, the Fourier series has infinitely many terms and has to be truncated, which produces an error term whose control requires information on the spacing of the numbers $\log|\sigma_1(p)/\sigma_2(p)|$. In the function field case, $\deg\sigma_1(p)-\deg\sigma_2(p)$ takes only $2R$ values on normalised generators, and \eqref{eq:exact} is an exact identity with a bounded number of terms. This reflects Corollary~\ref{cor:finite}.
\end{remark}

\subsection{Sums over prime elements}
For each Dirichlet character $\chi$ modulo $\f$ and $0\le n<2R$, fix an extension $\Xi_{\chi,n}$ of $\Hk^{(\chi)}\psi^n$ from $P(\f)$ to $I(\f)$; for $\chi=\chi_0$ we take $\Xi_{\chi_0,0}$ to be the principal character modulo $\f$. Since $\Xi_{\chi,n}$ and $\Hk^{(\chi)}\psi^n$ agree on principal ideals, Proposition~\ref{prop:decomp}(iv) and the orthogonality relation for the characters $\lambda$ of $\Cl(\OK)$ give
\begin{equation}\label{eq:primesum}
\sum_{p\in\mathcal{P}_N}\chi(p)=\sum_{n=0}^{2R-1}c_n(\gamma_\chi)\sum_{\substack{\p\text{ principal}\\\N(\p)=q^N}}\Xi_{\chi,n}(\p)=\sum_{n=0}^{2R-1}\frac{c_n(\gamma_\chi)}{h_K}\sum_\lambda\sum_{\N(\p)=q^N}\big(\Xi_{\chi,n}\lambda\big)(\p).
\end{equation}
If $\chi=\chi_0$, then only $n=0$ occurs by Proposition~\ref{prop:decomp}(ii) and (v). The term $\lambda=1$ is evaluated by Lemma~\ref{lem:PNT}(ii), and the terms $\lambda\neq1$ are bounded by Lemma~\ref{lem:PNT}(i), which applies by Lemma~\ref{lem:classchar}(i). We get
\begin{equation}\label{eq:principal-primes}
\sum_{p\in\mathcal{P}_N}\chi_0(p)=\#\{\p\text{ principal}:\N(\p)=q^N\}=\frac{q^N}{h_KN}+O\Big(\frac{q^{N/2}}{N}\Big).
\end{equation}
If $\chi\neq\chi_0$, then $\Xi_{\chi,n}\lambda$ is a Hecke character modulo $\f$ with conductor of degree at most $F+1$ whose restriction to $P(\f)$ is $\Hk^{(\chi)}\psi^n$, so it is not a norm twist by Proposition~\ref{prop:decomp}(iii). Lemma~\ref{lem:PNT}(i) and Proposition~\ref{prop:decomp}(v) give
\begin{equation}\label{eq:nonprincipal-primes}
\Big|\sum_{p\in\mathcal{P}_N}\chi(p)\Big|\le\sqrt{2R}\max_{n,\lambda}\Big|\sum_{\N(\p)=q^N}\big(\Xi_{\chi,n}\lambda\big)(\p)\Big|\ll F\frac{q^{N/2}}{N},
\end{equation}
where we used $\omega(\f)\le F$.

\subsection{Sums over $\kappa$}

\begin{lemma}\label{lem:kappa}
\begin{enumerate}[label=(\roman*)]
\item If $\kappa_1,\kappa_2\in\cS$ and $\kappa_2\kappa_1^{-1}\eqs1\bmod\f$, then $\kappa_1=\kappa_2$. Consequently,
\[
\sum_{\chi\bmod\f}\Big|\sum_{\kappa\in\cS}\chi(a\kappa^{-1})\Big|\le\varphi(\f)\,(\#\cS)^{1/2}.
\]
\item We have $\#\cS\ll q^{2y-E}$ and
\[
\#\cS\ge\frac{c_K}{h_K}q^M\frac{\varphi(\f)}{\N(\f)}-O\big(2^{\omega(\f)}\big).
\]
\end{enumerate}
\end{lemma}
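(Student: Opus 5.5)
For (i), let $\kappa_1,\kappa_2\in\cS$ with $\kappa_2\kappa_1^{-1}\eqs1\bmod\f$. We argue as in the proof of Lemma~\ref{lem:congruence}. Since $v_\p(\kappa_1)=v_\p(\Dd)$ for all $\p\mid\f$, the condition $v_\p(1-\kappa_2\kappa_1^{-1})\ge v_\p(\f)$ gives $v_\p(\kappa_1-\kappa_2)\ge v_\p(\Dd\f)$ for these $\p$. For $\p\nmid\f$ we have $v_\p(\kappa_1-\kappa_2)\ge v_\p(\Dd)=v_\p(\Dd\f)$, because both $\kappa_j$ lie in $\Dd$. Hence $\kappa_1-\kappa_2\in\Dd\f=(c)$.

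Now $\kappa_1-\kappa_2\in V_y$, so $|\sigma_i(\kappa_1-\kappa_2)|\le q^y$. If it were nonzero, then $\N(\kappa_1-\kappa_2)\le q^{2y}<q^{F+E}=\N(c)$ by \eqref{eq:params-ineq}. This contradicts divisibility by $c$, so $\kappa_1=\kappa_2$.

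For the character sum, apply Cauchy--Schwarz over the $\varphi(\f)$ characters:
\[
\sum_\chi\Big|\sum_\kappa\chi(a\kappa^{-1})\Big|\le\varphi(\f)^{1/2}\Big(\sum_\chi\Big|\sum_\kappa\chi(\kappa^{-1})\Big|^2\Big)^{1/2}.
\]
The factor $\chi(a)$ has modulus one and drops out. Expanding the square and using orthogonality, the inner sum equals $\varphi(\f)\cdot\#\{(\kappa_1,\kappa_2):\kappa_2\kappa_1^{-1}\eqs1\}$. By the first part this count is $\#\cS$, which gives the stated bound $\varphi(\f)(\#\cS)^{1/2}$.

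For the upper bound in (ii), note that $\cS\subseteq\Dd\cap V_y$. Apply Lemma~\ref{lem:DVy}; its hypothesis $2y\ge E$ follows from $2y\ge L+E-1-2R$ and $L\ge 1+2R$ for large $F$. Also $0\notin\cS$, since $\Dd^{-1}(0)$ is not coprime to $\f$. This gives $\#\cS\ll q^{2y-E}$.

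The lower bound needs an injection from ideals into $\cS$. Let $C\in\Cl(\OK)$ be the class of $\Dd^{-1}$. Take an integral ideal $\mathfrak{k}\in C$ with $\N(\mathfrak{k})\le q^M$ and $\mathfrak{k}+\f=\OK$. Then $\Dd\mathfrak{k}$ is principal; let $\kappa$ be its normalised generator (Lemma~\ref{lem:normgen}). Its norm is $q^{n}$ with $n\le M+E=2y-R$, so \eqref{eq:normgen-size} gives $|\sigma_i(\kappa)|\le q^{(n+R)/2}\le q^y$. Thus $\kappa\in\Dd\cap V_y$ and $\Dd^{-1}(\kappa)=\mathfrak{k}$ is coprime to $\f$, so $\kappa\in\cS$. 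Distinct $\mathfrak{k}$ give distinct $\kappa$. Lemma~\ref{lem:classcount} then yields the stated lower bound.

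The only delicate point is matching the parameter $M=2y-R-E$ with the size bound \eqref{eq:normgen-size}. The choice in \eqref{eq:params} is made exactly so that this works.
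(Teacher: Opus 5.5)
Your proof follows the paper's argument step for step. In (i) you show that $(c)=\Dd\f$ divides $\kappa_1-\kappa_2\in V_y$ and use $2y<F+E$, then apply Cauchy--Schwarz and orthogonality. In (ii) you combine $\cS\subseteq(\Dd\cap V_y)\setminus\{0\}$ with Lemma~\ref{lem:DVy}, and you inject ideals $\mathfrak{k}$ in the class of $\Dd^{-1}$ into $\cS$ via normalised generators of $\Dd\mathfrak{k}$ before invoking Lemma~\ref{lem:classcount}.

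One step needs a small repair. You cannot let $\chi(a)$ ``drop out'' and then work with $\chi(\kappa^{-1})$. The reason is that $\Dd$ need not be coprime to $\f$, so in general $a\notin K(\f)$ and $\kappa\notin K(\f)$; only the ratio $a\kappa^{-1}$ lies in $K(\f)$. Instead, expand $\bigl|\sum_{\kappa}\chi(a\kappa^{-1})\bigr|^2$ directly and use $\chi(a\kappa_1^{-1})\overline{\chi(a\kappa_2^{-1})}=\chi(\kappa_2\kappa_1^{-1})$ with $\kappa_2\kappa_1^{-1}\in K(\f)$, as the paper does. With this change the diagonal count and the final bound are unchanged.
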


\begin{proof}
(i) As in the proof of Lemma~\ref{lem:congruence}, since $v_\p(\kappa_1)=v_\p(\Dd)$ for $\p\mid\f$, the condition $\kappa_2\kappa_1^{-1}\eqs1\bmod\f$ is equivalent to $\f\mid\Dd^{-1}(\kappa_2-\kappa_1)$, i.e.\ to $\Dd\f\mid(\kappa_2-\kappa_1)$. If $\kappa_1\neq\kappa_2$, this gives
\[
q^{F+E}=\N(\Dd\f)\le\N(\kappa_2-\kappa_1)=|\sigma_1(\kappa_2-\kappa_1)|\,|\sigma_2(\kappa_2-\kappa_1)|\le q^{2y},
\]
contradicting $2y<F+E$. For the second statement, note that $\chi(a\kappa_1^{-1})\overline{\chi(a\kappa_2^{-1})}=\chi(\kappa_2\kappa_1^{-1})$, where $\kappa_2\kappa_1^{-1}\in K(\f)$. By the Cauchy--Schwarz inequality and orthogonality,
\[
\sum_\chi\Big|\sum_{\kappa\in\cS}\chi(a\kappa^{-1})\Big|\le\varphi(\f)^{1/2}\Big(\varphi(\f)\sum_{\substack{\kappa_1,\kappa_2\in\cS\\\kappa_2\kappa_1^{-1}\eqs1\bmod\f}}1\Big)^{1/2}=\varphi(\f)(\#\cS)^{1/2}.
\]
(ii) The upper bound follows from $\cS\subseteq(\Dd\cap V_y)\setminus\{0\}$ and Lemma~\ref{lem:DVy}, noting $2y\ge E$ by \eqref{eq:params-ineq}. For the lower bound, let $\mathfrak{k}$ be an integral ideal in the class of $\Dd^{-1}$, coprime to $\f$, with $\N(\mathfrak{k})\le q^M$. Then $\Dd\mathfrak{k}$ is principal; let $\kappa$ be its normalised generator. By \eqref{eq:normgen-size}, $|\sigma_i(\kappa)|\le q^{(E+M+R)/2}=q^y$, and $\kappa\in\Dd$ with $\Dd^{-1}(\kappa)=\mathfrak{k}$ coprime to $\f$. Hence $\kappa\in\cS$, and distinct $\mathfrak{k}$ give distinct $\kappa$. The claim now follows from Lemma~\ref{lem:classcount}.
\end{proof}

\subsection{Conclusion}
Inserting \eqref{eq:principal-primes}, \eqref{eq:nonprincipal-primes} and Lemma~\ref{lem:kappa} into \eqref{eq:orth}, we get
\begin{equation}\label{eq:Sigma-lower}
\Sigma\ge\frac{\#\cS}{\varphi(\f)}\Big(\frac{q^N}{h_KN}-O\Big(\frac{q^{N/2}}{N}\Big)\Big)-O\Big((\#\cS)^{1/2}F\frac{q^{N/2}}{N}\Big).
\end{equation}
We now estimate the three terms, using \eqref{eq:params-ineq} and Lemma~\ref{lem:arith} in the form $2^{\omega(\f)}\ll q^{\varepsilon F/8}$ and $\varphi(\f)\gg q^{(1-\varepsilon/8)F}$.

\emph{Main term.} By Lemma~\ref{lem:kappa}(ii) and $M-F+N\ge2L-1-3R$,
\[
\frac{\#\cS}{\varphi(\f)}\cdot\frac{q^N}{h_KN}\ge\frac{c_K}{h_K^2}\cdot\frac{q^{M-F+N}}{N}-O\Big(2^{\omega(\f)}\frac{q^N}{\varphi(\f)N}\Big)\ge\frac{c_K}{h_K^2}q^{-1-3R}\frac{q^{2L}}{N}-O\big(q^{L+\varepsilon F/4}\big).
\]
Here $q^{2L}\ge q^{(2/3+\varepsilon)N-2}$, whereas $q^{L+\varepsilon F/4}\le q^{(1/3+\varepsilon)N}$.

\emph{Error from the principal character.} By Lemma~\ref{lem:kappa}(ii) and $2y-E\le L$,
\[
\frac{\#\cS}{\varphi(\f)}\cdot\frac{q^{N/2}}{N}\ll q^{2y-E-F+\varepsilon F/8+N/2}\le q^{N/2+N-2F+\varepsilon N}\le q^{(1/6+2\varepsilon)N},
\]
since $N-2F\le(-1/3+\varepsilon)N$.

\emph{Error from the non-principal characters.} By Lemma~\ref{lem:kappa}(ii),
\[
(\#\cS)^{1/2}F\frac{q^{N/2}}{N}\ll F\frac{q^{y-E/2+N/2}}{N}\le F\frac{q^{L/2+N/2}}{N}\le F\frac{q^{(2/3+\varepsilon/4)N}}{N}.
\]
In all three estimates the quantity $E=\log_q\N(\Dd)$ has cancelled out. Altogether,
\[
\Sigma\ge\frac{c_K}{h_K^2}q^{-3-3R}\frac{q^{(2/3+\varepsilon)N}}{N}-O\Big(F\frac{q^{(2/3+\varepsilon/4)N}}{N}+q^{(1/3+\varepsilon)N}+q^{(1/6+2\varepsilon)N}\Big),
\]
which is positive for all sufficiently large $F$, since $F\le N$ and $\varepsilon<1/10$. This proves \eqref{eq:Sigma} and hence Theorem~\ref{thm:main}.\qed

\subsection*{Acknowledgements}
The author thanks Stephan Baier for helpful discussions. The author also thanks Will Sawin for a helpful comment on Hecke characters at the places at infinity. This work was started at the Indian Statistical Institute, Kolkata, and completed at the Indian Institute of Technology Bombay; the author thanks both institutes for excellent working environments.


\begin{thebibliography}{99}

\bibitem{Bai} S. Baier, \emph{Diophantine approximation on lines in $\mathbb{C}^2$ with Gaussian prime constraints}, Eur. J. Math. \textbf{3} (2017), no.~3, 614--649.

\bibitem{BDM} S. Baier, S. Das and E. A. Molla, \emph{Diophantine approximation with prime denominator in quadratic number fields under GRH}, Ramanujan J. \textbf{65} (2024), 1363--1405.

\bibitem{BaMa} S. Baier and D. Mazumder, \emph{Diophantine approximation with prime restriction in real quadratic number fields}, Math. Z. \textbf{299} (2021), no.~1--2, 699--750.

\bibitem{BMT} S. Baier, D. Mazumder and M. Technau, \emph{On the distribution of $\alpha p$ modulo one in quadratic number fields}, Unif. Distrib. Theory \textbf{16} (2021), no.~2, 1--48.

\bibitem{BaMo} S. Baier, E. A. Molla and A. Ganguly, \emph{Diophantine approximation with prime restriction in function fields}, J. Number Theory \textbf{241} (2022), 57--90.

\bibitem{BaMo2} S. Baier and E. A. Molla, \emph{Diophantine approximation with prime denominator in real quadratic function fields}, Finite Fields Appl. \textbf{91} (2023), Paper No.~102242, 40~pp.

\bibitem{BaTe} S. Baier and M. Technau, \emph{On the distribution of $\alpha p$ modulo one in imaginary quadratic number fields with class number one}, J. Th\'eor. Nombres Bordeaux \textbf{32} (2020), no.~3, 719--760.

\bibitem{Har83} G. Harman, \emph{On the distribution of $\alpha p$ modulo one}, J. London Math. Soc. (2) \textbf{27} (1983), no.~1, 9--18.

\bibitem{Har96} G. Harman, \emph{On the distribution of $\alpha p$ modulo one. II}, Proc. London Math. Soc. (3) \textbf{72} (1996), no.~2, 241--260.

\bibitem{Har19} G. Harman, \emph{Diophantine approximation with Gaussian primes}, Q. J. Math. \textbf{70} (2019), no.~4, 1505--1519.

\bibitem{Hay} D. R. Hayes, \emph{The distribution of irreducibles in GF$[q,x]$}, Trans. Amer. Math. Soc. \textbf{117} (1965), 101--127.

\bibitem{HBJ} D. R. Heath-Brown and C. Jia, \emph{The distribution of $\alpha p$ modulo one}, Proc. London Math. Soc. (3) \textbf{84} (2002), no.~1, 79--104.

\bibitem{Jia1} C. Jia, \emph{On the distribution of $\alpha p$ modulo one}, J. Number Theory \textbf{45} (1993), no.~3, 241--253.

\bibitem{Jia2} C. Jia, \emph{On the distribution of $\alpha p$ modulo one. II}, Sci. China Ser. A \textbf{43} (2000), no.~7, 703--721.

\bibitem{Mat} K. Matom\"aki, \emph{The distribution of $\alpha p$ modulo one}, Math. Proc. Cambridge Philos. Soc. \textbf{147} (2009), no.~2, 267--283.

\bibitem{Miy} T. Miyake, \emph{Modular Forms}, Springer Monographs in Mathematics, Springer, Berlin, 2006.

\bibitem{Ros} M. Rosen, \emph{Number Theory in Function Fields}, Graduate Texts in Mathematics \textbf{210}, Springer, New York, 2002.

\bibitem{Tat} J. T. Tate, \emph{Fourier analysis in number fields and Hecke's zeta-functions}, in: Algebraic Number Theory (J. W. S. Cassels and A. Fr\"ohlich, eds.), Academic Press, London, 1967, 305--347.

\bibitem{Thu} J. L. Thunder, \emph{An adelic Minkowski--Hlawka theorem and an application to Siegel's lemma}, J. Reine Angew. Math. \textbf{475} (1996), 167--185.

\bibitem{Vau} R. C. Vaughan, \emph{On the distribution of $\alpha p$ modulo 1}, Mathematika \textbf{24} (1977), no.~2, 135--141.

\bibitem{Vin} I. M. Vinogradov, \emph{The Method of Trigonometrical Sums in the Theory of Numbers}, translated, revised and annotated by K. F. Roth and A. Davenport, Dover, Mineola, NY, 2004.

\bibitem{Wei48} A. Weil, \emph{Sur les courbes alg\'ebriques et les vari\'et\'es qui s'en d\'eduisent}, Actualit\'es Sci. Ind. \textbf{1041}, Hermann, Paris, 1948.

\bibitem{Wei74} A. Weil, \emph{Basic Number Theory}, 3rd ed., Grundlehren der Mathematischen Wissenschaften \textbf{144}, Springer, Berlin, 1974.

\end{thebibliography}
\end{document}